\newtheorem{defn}{Definition}[section]
\newtheorem{lemma}[defn]{Lemma}
\newtheorem{prop}[defn]{Proposition}
\newtheorem{theo}[defn]{Theorem}
\newtheorem{coro}[defn]{Corollary}
\newtheorem{claim}{Claim}
\newtheorem{rk}[defn]{Remark}
\def\Ric{\mathop{\rm Ric}\nolimits}
\def\Hess {\mathop{\rm Hess}\nolimits}
\def\Rm{\mathop{\rm Rm}\nolimits}
\def\det{\mathop{\rm det}\nolimits}
\def\diam{\mathop{\rm diam}\nolimits}
\def\vol{\mathop{\rm vol}\nolimits}
\def\eucl{\mathop{\rm eucl}\nolimits}
\def\vol{\mathop{\rm Vol}\nolimits}
\def\inj{\mathop{\rm inj}\nolimits}
\def\A{\mathop{\rm A}\nolimits}
\def\AVR{\mathop{\rm AVR}\nolimits}
\def\Id{\mathop{\rm Id}\nolimits}
\def\Jac{\mathop{\rm Jac}\nolimits}
\begin{document}
\author{Chih-Wei Chen and Alix Deruelle}
\title{Structure at infinity of expanding gradient Ricci soliton}
\date{}
\maketitle

\begin{abstract}
We study the geometry at infinity of expanding gradient Ricci solitons $(M^n,g,\nabla f)$, $n\geq 3$, with finite asymptotic curvature ratio without curvature sign assumptions. We mainly prove that they have a cone structure at infinity.
\end{abstract}

 \section{Introduction}\label{Intro}
  The central theme of this paper is the study of the geometry at infinity of noncompact Riemannian manifolds. We will focus on the notion of asymptotic cone whose definition is recalled below.
     \begin{defn} Let $(M^n,g)$ be a complete noncompact Riemannian manifold and let $p\in M^n$.
 An \textbf{asymptotic cone} of $(M^n,g)$ at $p$ is a pointed Gromov-Hausdorff limit, provided it exists, of the sequence $(M^n,t_k^{-2}g,p)_k$ where $t_k\rightarrow+\infty$.
 \end{defn}
 
 Usually, the existence of an asymptotic cone is guaranteed by an assumption of nonnegative curvature. More precisely, if $(M^n,g)$ satisfies the nonnegativity assumption $\Ric\geq0$ then the existence of a limit is guaranteed by the Bishop-Gromov theorem and the Gromov's precompactness theorem: see \cite{Pet}.
 We mention two striking results in this direction.
 
 In case of nonnegative sectional curvature, any asymptotic cone of $(M^n,g)$ exists and is unique: it is the metric cone over its ideal boundary $M(\infty)$. Moreover, $M(\infty)$ is an Alexandrov space of curvature bounded below by $1$: see \cite{Gui-Kap}. 
 
   In case of nonnegative Ricci curvature and positive asymptotic volume ratio, i.e., $\lim_{r\rightarrow+\infty}\vol B(p,r)/r^n>0$, Cheeger and Colding proved that any asymptotic cone is a metric cone $C(X)$ over a length space $X$ of diameter not greater than $\pi$: see \cite{Che-Col-War}.
 Nonetheless, even in this case, uniqueness is not ensured: see Perelman \cite{Per-Con}.
 
In this paper, we consider the existence of asymptotic cone on \textit{expanding gradient Ricci solitons} where no curvature sign assumption is made. Instead, we require the finiteness of the \textit{asymptotic curvature ratio}. Such situation has already been investigated by \cite{Che-Chi} in the case of expanding gradient Ricci soliton with vanishing asymptotic curvature ratio.

Recall that the \textbf{asymptotic curvature ratio} of a complete noncompact Riemannian manifold $(M^n,g)$ is defined by 
\begin{eqnarray*}
\A(g):=\limsup_{r_p(x)\rightarrow+\infty}r_p(x)^2\arrowvert\Rm(g)(x)\arrowvert.
\end{eqnarray*}
Note that it is well-defined since it does not depend on the reference point $p\in M^n$. Moreover, it is invariant under scalings. This geometric invariant has generated a lot of interest: see for example \cite{BKN}, \cite{Pet-Tus}, \cite{Lot-She}, \cite{Lot-Vol}, \cite{Esc-Sch} for a static study of the asymptotic curvature ratio and \cite{Ben}, \cite{Ham-For},\cite{Per-Ent}, \cite{Cho-Lu}, \cite{Che-Chi} linking this invariant with the Ricci flow. Note also that Gromov \cite{Gro-Bou} and Lott-Shen \cite{Lot-She} have shown that any paracompact manifold can support a complete metric $g$ with finite $\A(g)$.\\
 
Now, we recall that an \textbf{expanding  gradient Ricci soliton} is a triple \\
$(M^n,g,\nabla f)$ where $(M^n,g)$ is a Riemannian manifold and $f$ is a smooth function on $M^n$  such that 
\begin{eqnarray}
\Ric+\frac{g}{2}=\Hess (f).\label{eq:1}
\end{eqnarray}

It is said \textbf{complete} if $(M^n,g)$ is complete and if the vector field $\nabla f$ is complete.
By \cite{Zha-Zhu}, the completeness of $(M^n,g)$ suffices to ensure the completeness of $\nabla f$.\\
In case of completeness, the associated Ricci flow is defined on $(-1,+\infty)$ by
\begin{eqnarray*}
g(\tau)=(1+\tau)\phi_{\tau}^*g,
\end{eqnarray*}
where $(\phi_{\tau})_{\tau}$ is the $1$-parameter family of diffeomorphisms generated by $\nabla(-f)/{(1+\tau)}$.
A canonical example is the Gaussian soliton $(\mathbb{R}^n,\eucl,\arrowvert x\arrowvert^2/4)$. Along the paper, it is essential to keep this example in mind to get a geometric feeling of the proofs: see \cite{Cao-Lim} for other examples of expanding gradient Ricci solitons.\\

The main result of this paper is the following theorem.

\begin{theo}\label{conv}[Cone structure at infinity]

Let $(M^n,g,\nabla f)$, $n\geq 3$, be a complete expanding gradient Ricci soliton with finite $\A(g)$.

For any sequence $(t_k)_k$ tending to $+\infty$ and $p\in M^n$,  $(M^n,t_k^{-2}g,p)_k$  Gromov-Hausdorff subconverges to a metric cone $(C(S_{\infty}),d_{\infty},x_{\infty})$ over a compact length space $S_{\infty}$.

Moreover, 

1) $C(S_{\infty})\setminus\{x_{\infty}\}$ is a smooth manifold with a $C^{1,\alpha}$ metric $g_{\infty}$ compatible with $d_{\infty}$ and the convergence is $C^{1,\alpha}$ outside the apex $x_{\infty}$.

2) $(S_{\infty},g_{S_{\infty}})$ where $g_{S_{\infty}}$ is the metric induced by $g_{\infty}$ on $S_{\infty}$,   is the $C^{1,\alpha}$ limit of the rescaled levels of the potential function $f$,\\ $(f^{-1}(t_k^2/4),t_k^{-2}g_{t_k^2/4})$ where $g_{t^2/4}$ is the metric induced by $g$ on $f^{-1}(t^2/4)$.

Finally, we can ensure that
\begin{eqnarray}
\arrowvert K_{g_{S_{\infty}}}-1\arrowvert&\leq&\A(g),\quad\mbox{in the Alexandrov sense},\label{eq:alex}\\
\frac{\vol(S_{\infty},g_{S_{\infty}})}{n}&=&\lim_{r\rightarrow+\infty}\frac{\vol B(q,r)}{r^n},\quad \forall q\in M^n.\label{eq:vol}
\end{eqnarray}\\

\end{theo}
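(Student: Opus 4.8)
The plan is to exploit the soliton structure to reduce everything to a first-order analysis of the level sets of $f$, using the finiteness of $\A(g)$ only through the curvature decay $|\Rm(g)|(x)\le C\,r_p(x)^{-2}$ valid near infinity. First I would record the standard soliton identities. Tracing \eqref{eq:1} gives $\Delta f=R+\tfrac n2$, and after normalizing the additive constant in $f$ one has the conserved quantity
\begin{eqnarray*}
R+|\nabla f|^2-f=0.
\end{eqnarray*}
Since finite $\A(g)$ forces $R(x)\to0$ as $r_p(x)\to+\infty$, this identity yields $|\nabla f|^2=f+o(f)$; integrating $\Hess f=\Ric+\tfrac g2$ along minimizing geodesics (with $|\Ric|\le C r_p^{-2}$) then gives the sharp asymptotics $f\sim r_p^2/4$ and $|\nabla f|\sim r_p/2$. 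In particular $f$ is proper and, for $c$ large, the level set $\Sigma_c:=f^{-1}(c)$ is a compact smooth hypersurface on which $\nabla f$ does not vanish.

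Next I would introduce adapted coordinates. Flowing by $\nabla f/|\nabla f|^2$ identifies the end $\{f>c_0\}$ with $\Sigma_{c_0}\times(c_0,+\infty)$ and writes
\begin{eqnarray*}
g=\frac{\diff c^2}{|\nabla f|^2}+h(c),\qquad \partial_c h=\frac{2\,\Ric|_{\Sigma_c}+h}{|\nabla f|^2},
\end{eqnarray*}
where $h(c)$ is the induced metric pulled back to $\Sigma_{c_0}$ and I used $\Hess f=\Ric+\tfrac g2$. The heart of the argument is that $|\nabla f|^2=c+o(c)$ and $|\Ric|\le C r_p^{-2}\le C' c^{-1}$ make the inhomogeneous term integrable: a direct computation gives $\bigl|\partial_c\bigl(c^{-1}h\bigr)\bigr|\lesssim c^{-2}|\Ric|\lesssim c^{-3}$ in the relevant norm, so $c^{-1}h(c)$ converges, while the homogeneous part $\partial_c h\approx h/c$ gives matching two-sided bounds showing the limit is a genuine (non-degenerate) metric. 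Using the curvature decay together with Shi-type/elliptic estimates for the soliton to upgrade to $C^{1,\alpha}$, I obtain a fixed smooth cross-section $S_\infty$ carrying a $C^{1,\alpha}$ metric $g_{S_\infty}$ such that the rescaled level sets $(f^{-1}(t_k^2/4),t_k^{-2}g_{t_k^2/4})$ converge to $(S_\infty,g_{S_\infty})$ in $C^{1,\alpha}$; this is assertion 2), and the non-degeneracy furnishes the non-collapsing needed below.

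With this in hand the cone structure is essentially algebra. Setting $s=2\sqrt f$ one checks $|\nabla s|\to1$, so $s$ is an asymptotic distance function, and the previous step gives $h(c)=s^2g_{S_\infty}+o(s^2)$ while $\diff c^2/|\nabla f|^2\to\diff s^2$; hence
\begin{eqnarray*}
g=\diff s^2+s^2g_{S_\infty}+o(s^2)\quad\text{as }s\to+\infty.
\end{eqnarray*}
Rescaling by $t_k^{-2}$ and passing to the limit, the curvature bound $|\Rm(t_k^{-2}g)|\le C\varepsilon^{-2}$ on annuli $\{\varepsilon\le r_p/t_k\le\varepsilon^{-1}\}$ together with the non-collapsing gives $C^{1,\alpha}$ (Cheeger-Gromov-Anderson) subconvergence away from the base point, whose limit is the metric cone $(C(S_\infty),d_\infty,x_\infty)$ over the compact length space $S_\infty$, smooth and $C^{1,\alpha}$ off the apex; this proves the convergence statement and assertion 1).

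Finally, for \eqref{eq:alex} I would use that a cone $\diff s^2+s^2g_{S_\infty}$ has tangential sectional curvatures equal to $(K_{g_{S_\infty}}-1)/s^2$, so passing $s^2|\Rm(g)|\le\A(g)+o(1)$ to the limit yields $|K_{g_{S_\infty}}-1|\le\A(g)$, read in the Alexandrov sense since $g_{S_\infty}$ is only $C^{1,\alpha}$ and Alexandrov curvature bounds pass to Gromov-Hausdorff limits. For \eqref{eq:vol}, scale invariance of the volume ratio and volume convergence under the above $C^{1,\alpha}$ convergence identify $\lim_{r\to\infty}\vol B(q,r)/r^n$ with the volume of the unit ball of $C(S_\infty)$, namely $\vol(S_\infty,g_{S_\infty})/n$, independently of $q$. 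I expect the main obstacle to be the cone step: controlling the level-set evolution sharply enough, quantitatively and \emph{integrably in $c$}, to conclude convergence of $c^{-1}h(c)$, and simultaneously extracting the non-collapsing from the soliton structure alone, since without any sign on $\Ric$ the usual Bishop-Gromov volume comparison is unavailable.
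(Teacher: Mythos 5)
Your route to the cone structure is genuinely different from the paper's and, for that part, essentially sound. The paper never integrates the level-set flow to get explicit $C^0$ cone asymptotics; instead it proves two-sided volume bounds $cr^n\leq\vol B(x,r)\leq c'r^n$ for \emph{all} balls (Theorem \ref{Gro-Haus}, via lemma \ref{volboule1}, the coarea formula along the flow of $\nabla f/\arrowvert\nabla f\arrowvert^2$, and a localized Bishop--Gromov with $\Ric\geq -C/r_p^2$), and then invokes Gromov precompactness; the identification of the limit as a cone and of its cross-section is done through $C^{1,\alpha}$-compactness theorems applied to the level sets and to annuli. Your ODE argument --- $\partial_c h=(2\Ric|_{\Sigma_c}+h)/\arrowvert\nabla f\arrowvert^2$, two-sided homogeneous bounds, and integrability of $\partial_c(c^{-1}h)$ (the correct order is $c^{-2}$ rather than $c^{-3}$, but still integrable) --- is the same computation as in lemma \ref{diam}, pushed further: it yields $C^0$ convergence of $c^{-1}h(c)$, hence $g=\diff s^2+s^2g_{S_\infty}+o(s^2)$, from which pointed GH convergence to the cone follows directly, even without passing to a subsequence and without Gromov precompactness. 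That is a real gain: a canonical cross-section and full GH convergence, where the paper only gets subconvergence. Your observation that non-degeneracy of $h_\infty$ yields the volume non-collapsing (via the coarea formula along the flow) correctly replaces lemma \ref{volboule1}, and your treatment of assertion 1) on annuli matches the paper's Step 2. Incidentally, your closing worry is misplaced: Bishop--Gromov \emph{is} usable here in localized form, since $\Ric\geq -C/r_p(x)^2$ on $B(x,r_p(x)/2)$, which is exactly how the paper's Step 2 of Theorem \ref{Gro-Haus} proceeds.

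The genuine gap is the step where you ``upgrade to $C^{1,\alpha}$'' the convergence of the rescaled level sets using ``Shi-type/elliptic estimates for the soliton.'' These estimates are not available off the shelf, and the obvious attempt to prove them fails to give what you need: the associated Ricci flow $g(\tau)=(1+\tau)\phi_\tau^*g$ exists only on $(-1,+\infty)$, so at a point with $r_p(x)=r$ the backward existence time is bounded by $1$ rather than by the curvature scale $r^2$; Shi's local estimate then gives only $\arrowvert\nabla\Rm\arrowvert\lesssim r^{-2}$, not the sharp $r^{-3}$. With merely $\arrowvert\nabla\Rm\arrowvert\lesssim r^{-2}$, differentiating your evolution equation once keeps the integrability (so $C^1$ convergence of $c^{-1}h$ survives), but at the level of second derivatives the bounds acquire a logarithmic divergence, so you do not obtain uniform $C^{1,\beta}$ bounds and Arzel\`a--Ascoli cannot deliver $C^{1,\alpha}$ convergence. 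The sharp decay $\arrowvert\nabla^k\Rm\arrowvert\lesssim r^{-2-k}$ for expanders with finite $\A(g)$ is in fact true, but it is a substantial theorem in its own right, established well after this paper, and cannot be cited as routine. The fix is exactly what the paper does: avoid derivative-of-curvature estimates altogether by applying the $C^{1,\alpha}$-compactness theorems of Greene--Wu/Peters/Kasue to the level sets, which require only the $C^0$ curvature bounds of lemma \ref{deriv} (Gauss equation plus $\arrowvert\nabla f\arrowvert^2\big|_{M_{t^2/4}}\sim t^2/4$), the diameter bound of lemma \ref{diam}, and the non-collapsing you already have; this yields the $C^{1,\alpha}$ subconvergence claimed in assertion 2), and your $C^0$ limit then identifies the sublimit uniquely. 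The same remark applies to \eqref{eq:alex}: rather than reading curvature bounds off the singular cone metric (delicate, since curvature does not pass pointwise through $C^{1,\alpha}$ limits and scale-dependent Alexandrov bounds on the cone are awkward to restrict to tangential planes), take the paper's route: the level sets themselves satisfy $1-\A(g)-\epsilon\leq K\leq 1+\A(g)+\epsilon$ by \eqref{eq:alex2}, and these two-sided comparison bounds pass to their GH limit $(S_\infty,g_{S_\infty})$ in the Alexandrov sense.
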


As a direct consequence of theorem \ref{conv}, we get uniqueness of the asymptotic cone in case of vanishing asymptotic curvature ratio:

\begin{coro}[Asymptotically flatness]\label{flat}
Let $(M^n,g,\nabla f)$, $n\geq 3$, be a complete expanding gradient Ricci soliton.
Assume
\begin{eqnarray*}
\A(g)=0.
\end{eqnarray*}
Then, with the notations of theorem \ref{conv},
\begin{eqnarray*}
(S_{\infty},g_{S_{\infty}})=\amalg_{i\in I}(\mathbb{S}^{n-1}/ \Gamma_i,g_{std})\quad\mbox{and}\quad(C(S_{\infty}),d_{\infty},x_{\infty})=(C(S_{\infty}), \eucl, 0)
\end{eqnarray*}
where $\Gamma_i$ are finite groups of Euclidean isometries and $\arrowvert I\arrowvert$ is the (finite) number of ends of $M^n$. 

Moreover, for $p\in M^n$,
\begin{eqnarray*}
\sum_{i\in I}\frac{\omega_n}{\arrowvert\Gamma_i\arrowvert}=\lim_{r\rightarrow+\infty}\frac{\vol B(p,r)}{r^n},
\end{eqnarray*}
where $\omega_n$ is the volume of the unit Euclidean ball.
\end{coro}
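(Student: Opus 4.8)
The plan is to feed $\A(g)=0$ into the conclusions of Theorem \ref{conv} and read off the rigid structure. Fix a sequence $t_k\to+\infty$ together with a subsequential limit $(C(S_\infty),d_\infty,x_\infty)$ as provided by the theorem. The curvature estimate \eqref{eq:alex} now reads $|K_{g_{S_\infty}}-1|\leq 0$, so the compact length space $(S_\infty,g_{S_\infty})$ has Alexandrov curvature bounded simultaneously above and below by $1$. Since part 1) of the theorem guarantees that $g_{S_\infty}$ is a genuine $C^{1,\alpha}$ Riemannian metric on the smooth manifold $S_\infty$, I would invoke the rigidity for two-sided Alexandrov bounds: a Riemannian manifold whose Alexandrov curvature is pinched to the constant $1$ carries a smooth metric of constant sectional curvature $1$. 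This upgrade from the weak $C^{1,\alpha}$/Alexandrov regularity to genuine constant curvature is the principal obstacle; I expect to handle it either through the rigidity theory of Alexandrov spaces with curvature pinched to a constant, or via a harmonic-coordinate bootstrap showing that a pinched $C^{1,\alpha}$ metric is in fact smooth.

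Granting this, each connected component of $S_\infty$ is a compact $(n-1)$-manifold of constant curvature $1$, hence, by the classification of space forms, a spherical space form $\mathbb{S}^{n-1}/\Gamma_i$ with $\Gamma_i\subset O(n)$ a finite group acting freely. Taking the metric cone commutes with these quotients and $C(\mathbb{S}^{n-1})=(\mathbb{R}^n,\eucl)$; therefore $C(S_\infty)=\amalg_{i\in I}(\mathbb{R}^n/\Gamma_i,\eucl)$ and $d_\infty$ is the flat cone metric, that is $(C(S_\infty),d_\infty,x_\infty)=(C(S_\infty),\eucl,0)$. This establishes the first display of the corollary.

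It remains to identify $|I|$ with the number of ends and to compute the volume. Under the $C^{1,\alpha}$ convergence of part 2), the connected components of $S_\infty$ arise as limits of the connected components of the rescaled level sets $f^{-1}(t_k^2/4)$; since $f$ is proper, these level sets separate, for large $k$, the ends of $M^n$, so the components of $S_\infty$ are in bijection with the ends of $M^n$, and the compactness of $S_\infty$ forces this number to be finite and equal to $|I|$. For the volume formula I substitute the computed link into \eqref{eq:vol}: using $\vol(\mathbb{S}^{n-1})=n\omega_n$ one gets $\vol(S_\infty)=\sum_{i\in I}n\omega_n/|\Gamma_i|$, whence
\begin{eqnarray*}
\sum_{i\in I}\frac{\omega_n}{|\Gamma_i|}=\frac{\vol(S_\infty,g_{S_\infty})}{n}=\lim_{r\rightarrow+\infty}\frac{\vol B(p,r)}{r^n}.
\end{eqnarray*}
Since the right-hand side is the asymptotic volume ratio, independent of both the base point $p$ and the sequence $(t_k)_k$, and since part 2) ties each $\Gamma_i$ to the intrinsic deck group of the corresponding end rather than to the chosen sequence, every subsequential limit carries the same data; this yields the asserted uniqueness of the asymptotic cone.
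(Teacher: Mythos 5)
Your proposal is correct in substance, and it establishes the corollary by a genuinely different route than the paper. The paper's own proof is terse: it follows the proof of Theorem 78 of \cite{Pet}, showing on the one hand that the $C^{1,\alpha}$ limit metric $g_{S_{\infty}}$ is \emph{weakly Einstein}, hence smooth by elliptic regularity in harmonic coordinates, and on the other hand that, in polar coordinates, the metrics $t^{-2}g_{t^2/4}$ converge in $C^{1,\alpha}$ to a constant curvature metric; the flat cone structure, the identification of the $\Gamma_i$, and the volume identity are then read off from Theorem \ref{conv}. You instead take the Alexandrov estimate \eqref{eq:alex} at face value: with $\A(g)=0$ it pins the curvature of $S_{\infty}$ to exactly $1$ in the comparison sense, and you appeal to the rigidity of spaces with two-sided curvature bounds (Berestovskii--Nikolaev theory: a length space with curvature both $\geq 1$ and $\leq 1$ in the Alexandrov sense is locally isometric to the unit sphere), after which Killing--Hopf and the classification of space forms give $S_{\infty}=\amalg_i\,\mathbb{S}^{n-1}/\Gamma_i$. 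This is a legitimate shortcut: it exploits the estimate that Theorem \ref{conv} already records and avoids re-deriving regularity by PDE, at the price of importing a nontrivial external rigidity theorem --- which plays exactly the role that the weakly-Einstein/elliptic-regularity argument plays in the paper (your fallback ``harmonic-coordinate bootstrap'' is in fact the paper's route). The remaining steps --- the cone over a round space form being a flat Euclidean cone, the bijection between components of $S_{\infty}$ and ends of $M^n$ via properness of $f$ and the gradient flow, and the volume identity obtained by inserting $\vol(\mathbb{S}^{n-1}/\Gamma_i)=n\omega_n/\arrowvert\Gamma_i\arrowvert$ into \eqref{eq:vol} --- agree with what the paper does, explicitly or implicitly.

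One step of yours deserves more care: the closing uniqueness argument. Saying that each end contributes ``the same deck group'' to every subsequential limit does not by itself determine the limit up to isometry, since abstractly isomorphic finite subgroups of $O(n)$ need not be conjugate (lens spaces with equal fundamental group need not be isometric), so the isometry class of the flat cone $\mathbb{R}^n/\Gamma_i$ is not pinned down by the abstract group $\Gamma_i$. To get independence of the sequence $(t_k)_k$ you should note that the levels $f^{-1}(t^2/4)$ are, for $t$ large, all diffeomorphic to one another (via the flow of $\nabla f/\arrowvert\nabla f\arrowvert^2$), that every subsequential limit is a round space form diffeomorphic to them by the $C^{1,\alpha}$ convergence of part 2), and then invoke the classical fact that diffeomorphic spherical space forms are isometric (de Rham); alternatively, this is what the paper's polar-coordinate convergence of the whole family $t^{-2}g_{t^2/4}$ is meant to achieve. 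This is fixable sketchiness rather than a wrong step --- and the paper itself is equally terse on this point --- but as written your last paragraph does not yet prove uniqueness.
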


\begin{rk}
It is not known to the authors if we still have uniqueness in case of finite (or small) asymptotic curvature ratio.
\end{rk}

\begin{rk}
Another consequence of theorem \ref{conv} is to provide examples of expanding gradient Ricci soliton coming out from metric cones. Indeed, under assumptions and notations of theorem \ref{conv}, since $f$ is proper (lemma \ref{distance}), take any $p$ such that $\nabla f (p)=0$. Then, the pointed sequence $(M^n,t_k^{-2}g,p)_k$ is isometric to the pointed sequence $(M^n,g(t_k^{-2}-1),p)$ since the $1$-parameter family of diffeomorphisms generated by $-\nabla f/{(1+\tau)}$, for $\tau \in (-1,+\infty)$, fixes $p$. Therefore, by theorem \ref{conv}, up to a subsequence, such an expanding gradient Ricci soliton comes out from a metric cone. A similar situation has already been encountered in the case of Riemannian manifolds with nonnegative curvature operator and positive asymptotic volume ratio: see \cite{Sch-Sim}.
\end{rk}

\begin{rk}
The major difficulty to prove theorem \ref{conv} is to ensure the existence of an asymptotic cone because no assumption of curvature sign is assumed.
For this purpose, we have to control the growth of the metric balls of such an expanding gradient Ricci soliton: see theorem \ref{Gro-Haus}.
\end{rk}

In the second part of the paper, we will deal with the number of ends of an expanding gradient Ricci soliton. To this aim, we follow closely the work of Munteanu and Sesum \cite{Mun-Ses} on shrinking and steady gradient Ricci solitons. Therefore, we will be sketchy: see \cite{Mun-Ses} for further details and comments. The main result is the following:

\begin{theo}\label{end}
Let $(M^n,g,\nabla f)$, $n\geq 3$, be a complete expanding gradient Ricci soliton such that
$$\inf_{M^n}R>-\frac{n}{2}+1,$$
Then $(M^n,g)$ has one end and it is nonparabolic.
\end{theo}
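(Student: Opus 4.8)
Following Munteanu--Sesum \cite{Mun-Ses}, the plan is to read the hypothesis as a lower bound on $\Delta f$ and to use it twice. Tracing \eqref{eq:1} gives $\Delta f=R+\frac n2$, and the standard soliton identities $\nabla R=-2\Ric(\nabla f)$ and $R+|\nabla f|^2-f=\mathrm{const}$ let me normalise $R+|\nabla f|^2=f$ without changing $\nabla f$ or the hypothesis. Writing $R_0:=\inf_M R$, the assumption becomes $\Delta f\geq R_0+\frac n2=:c_0>1$, and from $R\geq R_0$ one gets $|\nabla\sqrt{f-R_0}|=\frac12\sqrt{(f-R)/(f-R_0)}\leq\frac12$, hence integrating from a fixed basepoint $p$ the linear bound $|\nabla f|\leq\frac12 r_p+C$ (so $f$ grows at most quadratically and is proper, lemma \ref{distance}).

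I would first deduce nonparabolicity from volume growth. Integrating $\Delta f\geq c_0$ over a ball and using the divergence theorem,
\begin{eqnarray*}
c_0\vol B(p,r)\leq\int_{\partial B(p,r)}\partial_\nu f\leq\Big(\tfrac r2+C\Big)\,\mathrm{Area}(\partial B(p,r)),
\end{eqnarray*}
so $V(r):=\vol B(p,r)$ obeys $V'\geq\frac{2c_0}{r+2C}V$ and therefore $V(r)\geq c\,r^{2c_0}$; the same computation on $E\cap B(p,r)$ for an end $E$ (the boundary term on $\partial E\subset\partial B(p,r_0)$ being $O(1)$) gives $\vol(E\cap B(p,r))\geq c\,r^{2c_0}$. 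Since $2c_0>2$, the Varopoulos--Grigor'yan integral $\int^{+\infty}r\,[\vol(E\cap B(p,r))]^{-1}\diff r$ converges, so every end is nonparabolic; in particular $M$ is nonparabolic.

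For the uniqueness of the end, suppose $M$ had at least two; then it has at least two nonparabolic ends, and Li--Tam theory furnishes a nonconstant bounded harmonic function $u$ with $\int_M|\nabla u|^2<\infty$. The soliton structure now replaces the missing Ricci sign: writing $\Ric=\Hess f-\frac g2$, integrating by parts against a cutoff $\phi$ and using $\Delta u=0$ gives
\begin{eqnarray*}
\int_M\Ric(\nabla u,\nabla u)\,\phi^2=\tfrac12\int_M|\nabla u|^2\Big(R+\tfrac n2-1\Big)\phi^2+\mathrm{Err},
\end{eqnarray*}
while Bochner's formula $\frac12\Delta|\nabla u|^2=|\Hess u|^2+\Ric(\nabla u,\nabla u)$, tested against $\phi^2$, yields $\int_M\Ric(\nabla u,\nabla u)\phi^2+\frac12\int_M|\Hess u|^2\phi^2\leq C\int_M|\nabla u|^2|\nabla\phi|^2$. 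Subtracting and using $R+\frac n2-1\geq c_0-1=:\delta>0$,
\begin{eqnarray*}
\tfrac\delta2\int_M|\nabla u|^2\phi^2+\tfrac12\int_M|\Hess u|^2\phi^2\leq C\int_M|\nabla u|^2|\nabla\phi|^2-\mathrm{Err}.
\end{eqnarray*}
With the usual cutoffs $\phi\uparrow1$ both right-hand terms tend to $0$: the first by finiteness of the energy, and $\mathrm{Err}$ because it is bounded by $\int_{B(p,2r)\setminus B(p,r)}|\nabla u|^2|\nabla f||\nabla\phi|\lesssim\int_{B(p,2r)\setminus B(p,r)}|\nabla u|^2$, using $|\nabla f|\lesssim r_p$ and $|\nabla\phi|\lesssim 1/r$. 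Hence $\int_M|\nabla u|^2=0$ and $u$ is constant, a contradiction; so $M$ has exactly one end, and it is nonparabolic.

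The crux is the first identity of the last paragraph: with no curvature sign the term $\int\Ric(\nabla u,\nabla u)$ is a priori uncontrolled, and everything rests on trading it, through the soliton equation and one integration by parts, for the weight $R+\frac n2-1$, which the hypothesis $\inf_M R>-\frac n2+1$ makes strictly positive. That same threshold is what renders the volume growth super-quadratic ($2c_0>2$), so both halves of the theorem are governed by the single borderline $R_0=-\frac n2+1$. The only delicate technical point is the decay of $\mathrm{Err}$, for which the linear gradient bound $|\nabla f|\lesssim r_p$ from the first step is exactly what is needed.
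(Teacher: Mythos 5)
The second half of your argument is correct and is essentially the paper's own: assuming two ends, both nonparabolic, Li--Tam theory produces a nonconstant bounded harmonic function of finite energy, and your computation trading $\int\Ric(\nabla u,\nabla u)\phi^2$ for the weight $R+\tfrac n2-1$ via $\Delta f=R+\tfrac n2$, combined with Bochner, Kato, and the linear bound $\arrowvert\nabla f\arrowvert\lesssim r_p$ to kill the cutoff error, is exactly the paper's proof of part 1) of theorem \ref{int-end} and corollary \ref{nonpara}. Likewise your volume estimate $\vol(E\cap B(p,r))\geq c\,r^{2c_0}$, $2c_0=n+2\inf_{M^n}R>2$, is a correct rederivation of proposition \ref{Car-Ni} localized to an end.

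The gap is the step from volume growth to nonparabolicity. The implication you invoke --- convergence of $\int^{+\infty}r\,[\vol(E\cap B(p,r))]^{-1}\diff r$ implies $E$ nonparabolic --- is false in general; only the converse (Varopoulos--Grigor'yan--Karp: divergence of that integral forces parabolicity) holds without further hypotheses. Super-quadratic, even exponential, volume growth does not rule out parabolicity: on $\mathbb{R}^2$ parabolicity is a conformal invariant, so $(\mathbb{R}^2,e^{2\phi}\eucl)$ is parabolic for every $\phi$ no matter how fast its balls grow, and taking a Riemannian product with a flat torus gives counterexamples in every dimension $n\geq3$. The converse does hold under extra structure (nonnegative Ricci curvature, or volume doubling plus a Poincar\'e inequality), but nothing of that kind is available here, since the theorem assumes no curvature sign. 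This is precisely why the paper introduces an additional analytic ingredient: the weighted Poincar\'e (Hardy-type) inequality (\ref{weight}),
\begin{eqnarray*}
\int_{M^n}\frac{\inf_{M^n}R+n/2-1}{2(f+n/2-1)}\,\phi^2\leq\int_{M^n}\arrowvert\nabla\phi\arrowvert^2,\qquad\phi\in C_c^{\infty}(M^n),
\end{eqnarray*}
valid when $\inf_{M^n}R>-n/2+1$. The paper then argues by contradiction through Li's criterion (lemma \ref{Li-crit}): if an end $E$ were parabolic, the harmonic functions $u_i$ (equal to $1$ on $\partial E$, $0$ on $\partial B(p,r_i)\cap E$) would converge to the constant $1$; testing (\ref{weight}) with $\phi u_i$ and using $f\sim r_p^2/4$ forces the at most quadratic growth $\vol(E(p,r_1)\setminus E(p,r_0))\leq C\vol E(p,r_0)\,r_1^2$, contradicting the super-quadratic Carrillo--Ni bound. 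So the quantity you computed is the right one to contradict, but on its own it cannot yield nonparabolicity; you are missing the weighted Poincar\'e inequality (or an equivalent capacity estimate) that converts it into one.
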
   

Finally,  the asymptotic volume ratio $\AVR(g):=\lim_{r\rightarrow +\infty}\vol B(p,r)/r^n$ is well-defined and positive in case of finite asymptotic curvature ratio $\A(g)$: see lemma \ref{distance}. How can we link these two invariants in a global inequality?
This is the purpose of section 5. For example, we get the

\begin{prop}\label{geo-ineq}

Let $(M^{n},g,\nabla f)$, $n\geq 3$, be a complete expanding gradient Ricci soliton.

Assume $\A(g)<\epsilon$, where $\epsilon$ is a universal constant small enough (less than $3/5$).

Then,
\begin{eqnarray}
\sum_{i\in I}\frac{\omega_n}{\arrowvert \Gamma_i\arrowvert}\frac{1}{(1+\A(g))^{\frac{n-1}{2}}}\leq\AVR(g)\leq \sum_{i\in I}\frac{\omega_n}{\arrowvert \Gamma_i\arrowvert}\frac{1}{(1-\A(g))^{\frac{n-1}{2}}},
\end{eqnarray}
where $\arrowvert I\arrowvert$ is the (finite) number of ends of $M^n$ and $\arrowvert \Gamma_i\arrowvert$ is the order of the fundamental group of the i-th end of $M^n$. \\
\end{prop}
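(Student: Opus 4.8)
The plan is to reduce everything to a volume comparison on the cross-section $S_\infty$ furnished by Theorem \ref{conv}, exploiting the two-sided pinching \eqref{eq:alex} together with the volume identity \eqref{eq:vol}. First I would decompose $S_\infty$ into its connected components. Since $f$ is proper (lemma \ref{distance}) and, by part 2) of Theorem \ref{conv}, the rescaled level sets $(f^{-1}(t_k^2/4),t_k^{-2}g_{t_k^2/4})$ converge in $C^{1,\alpha}$ to $(S_\infty,g_{S_\infty})$, the components of $S_\infty$ are in bijection with the ends of $M^n$, and for each $i\in I$ the component $S_\infty^i$ is diffeomorphic to the cross-section of that end for $k$ large; in particular $\pi_1(S_\infty^i)=\Gamma_i$, which is finite since \eqref{eq:alex} forces curvature $\geq 1-\A(g)>0$ and hence Bonnet--Myers applies. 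By \eqref{eq:vol}, $\AVR(g)=\tfrac1n\vol(S_\infty,g_{S_\infty})=\tfrac1n\sum_{i\in I}\vol(S_\infty^i)$, so it suffices to sandwich each $\vol(S_\infty^i)$ between $\tfrac{\vol(\mathbb{S}^{n-1})}{|\Gamma_i|}(1+\A(g))^{-(n-1)/2}$ and $\tfrac{\vol(\mathbb{S}^{n-1})}{|\Gamma_i|}(1-\A(g))^{-(n-1)/2}$, and then invoke $\omega_n=\vol(\mathbb{S}^{n-1})/n$.

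For the upper bound I would pass to the (finite) universal cover $\tilde S_\infty^i$, on which $\vol(\tilde S_\infty^i)=|\Gamma_i|\,\vol(S_\infty^i)$. As $\tilde S_\infty^i$ is a simply connected $(n-1)$-dimensional space with curvature $\geq 1-\A(g)$ in the Alexandrov sense, the Bishop--Gromov inequality bounds its volume by that of the round sphere of constant curvature $1-\A(g)$, namely $(1-\A(g))^{-(n-1)/2}\vol(\mathbb{S}^{n-1})$. Dividing by $|\Gamma_i|$ and summing over $i$ gives the right-hand inequality.

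The lower bound is where the hypothesis $\A(g)<\epsilon$ enters and is the main obstacle. Working again on $\tilde S_\infty^i$, I now use the upper bound $K\leq 1+\A(g)$. An upper curvature bound alone does not control volume from below without a lower bound on the injectivity radius; to obtain one I would invoke Klingenberg's estimate, whose long-homotopy step requires quarter-pinching. Since \eqref{eq:alex} yields the pinching ratio $\tfrac{1-\A(g)}{1+\A(g)}$, the quarter-pinching condition $\tfrac{1-\A(g)}{1+\A(g)}>\tfrac14$ is precisely $\A(g)<\tfrac35$, which is exactly the threshold fixing the universal constant $\epsilon=3/5$. With $\inj(\tilde S_\infty^i)\geq \pi/\sqrt{1+\A(g)}$ in hand, Günther's volume comparison gives $\vol B(x,r)\geq v_{1+\A(g)}(r)$ for $r<\inj$, where $v_{1+\A(g)}(r)$ is the volume of the $r$-ball in the space form of curvature $1+\A(g)$; letting $r\to \pi/\sqrt{1+\A(g)}$ yields $\vol(\tilde S_\infty^i)\geq (1+\A(g))^{-(n-1)/2}\vol(\mathbb{S}^{n-1})$, and dividing by $|\Gamma_i|$ and summing gives the left-hand inequality.

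A technical point to address throughout is that $g_{S_\infty}$ is only $C^{1,\alpha}$, so the comparisons above are a priori applied in the Alexandrov sense; Bishop--Gromov is unproblematic there, but the injectivity-radius-based lower bound is most cleanly argued on genuinely smooth representatives. I would therefore establish that estimate on the smooth rescaled level sets $(f^{-1}(t_k^2/4),t_k^{-2}g)$, whose intrinsic sectional curvatures are pinched in $[1-\A(g)-o(1),\,1+\A(g)+o(1)]$ by the Gauss equation together with the earlier Hessian and curvature estimates, and then pass to the $C^{1,\alpha}$ limit, the volumes and injectivity radii being continuous under this convergence.
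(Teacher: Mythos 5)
Your proposal is correct and follows essentially the same route as the paper: curvature pinching of the rescaled level sets $(M_{t^2/4},t^{-2}g_{t^2/4})$ via the Gauss equation (lemma \ref{deriv}), Myers for finiteness of $\Gamma_i$, Bishop on the finite universal cover for the upper bound, and Klingenberg's quarter-pinching injectivity radius estimate (whence the threshold $\A(g)<3/5$) combined with G\"unther's comparison for the lower bound, together with the identification $n\AVR(g)=\lim_t\vol(M_{t^2/4},t^{-2}g_{t^2/4})$. The only cosmetic difference is that you phrase the argument on the limit cross-section $S_\infty$ of theorem \ref{conv} and then, to handle the $C^{1,\alpha}$ regularity, retreat to the smooth level sets and pass to the limit --- which is exactly where the paper works from the start, bypassing theorem \ref{conv} entirely.
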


\textbf{Organization}. In section 2, we study the geometry of the levels of the potential function. From this, we get some information about the topology at infinity of expanding gradient Ricci soliton. In section 3, we prove theorem \ref{conv}. In section 4, we give a short proof of theorem \ref{end}. In section 5, we establish geometric inequalities involving the asymptotic curvature ratio $\A(g)$ and the asymptotic volume ratio $\AVR(g)$ of an expanding gradient Ricci soliton. The last two sections do not depend on theorem \ref{conv}. They can be read independently of section 3.\\

\textbf{Acknowledgements.} 
The authors would like to thank Gilles Carron for the proof of theorem \ref{Gro-Haus}.
The first author appreciates his advisor Gérard Besson for his constant encouragement.
The second author would like to thank his advisor Laurent Bessières for his constant support and his precious remarks.

\section{Geometry and topology of the level sets of $f$}\label{section1}

In this section, we consider a complete expanding gradient Ricci soliton (EGS) $(M^n,g,\nabla f)$ satisfying one of the two following basic assumptions: \\

 Assumption $1$ $(A1)$: 
 \begin{eqnarray*}
 \Ric\geq -\frac{C}{1+r_p^2}g, 
\end{eqnarray*}
for some nonnegative constant $C$ and some $p\in M^n$ where $r_p$ denotes the distance function from the point $p$.\\
 
 Assumption 2 $(A2)$:
 \begin{eqnarray*}
\arrowvert\Ric\arrowvert\leq \frac{C}{1+r_p^2},
\end{eqnarray*}
 for some nonnegative constant $C$ and some $p\in M^n$.
 
Of course, (A$2$) implies (A$1$) and finite asymptotic curvature ratio implies (A$2$).

We recall the basic differential equations satisfied by an expanding gradient Ricci soliton \cite{Ben}.

\begin{lemma}\label{id}
Let $(M^n,g,\nabla f)$ be a complete EGS. Then:
\begin{eqnarray}
\Delta f &=& R+\frac{n}{2}, \label{eq:2} \\
\nabla R+ 2\Ric(\nabla f)&=&0, \label{eq:3} \\
\arrowvert \nabla f \arrowvert^2+R&=&f+Cst. \label{eq:4}
\end{eqnarray}
\end{lemma}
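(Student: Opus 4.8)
The plan is to derive the three identities successively, each one feeding into the next, starting from the defining equation \eqref{eq:1} and using only three tools: the $g$-trace, the contracted second Bianchi identity $\nabla^j R_{ij}=\tfrac{1}{2}\nabla_i R$, and the Bochner-type commutation formula for third derivatives of $f$. Equation \eqref{eq:2} is the easiest: taking the $g$-trace of \eqref{eq:1} and using $\trace\Ric=R$, $\trace(g/2)=n/2$, and $\trace\Hess(f)=\Delta f$ gives $\Delta f=R+n/2$ immediately.

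For \eqref{eq:3}, I would take the divergence of \eqref{eq:1}. Writing the soliton equation in local coordinates as $R_{ij}+\tfrac{1}{2}g_{ij}=\nabla_i\nabla_j f$ and applying $\nabla^j$, the left-hand side becomes $\nabla^j R_{ij}=\tfrac{1}{2}\nabla_i R$ by the contracted Bianchi identity, while the right-hand side is $\nabla^j\nabla_i\nabla_j f$. Since $\nabla_i\nabla_j f$ is a symmetric $2$-tensor, this equals $\nabla^j\nabla_j\nabla_i f=\Delta(\nabla_i f)$, and the commutation identity $\Delta(\nabla_i f)=\nabla_i(\Delta f)+R_{ij}\nabla^j f$ turns it into $\nabla_i(\Delta f)+R_{ij}\nabla^j f$. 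Substituting $\Delta f=R+n/2$ from \eqref{eq:2}, so that $\nabla_i(\Delta f)=\nabla_i R$, and equating the two sides, one copy of $\nabla_i R$ cancels and we are left with $\tfrac{1}{2}\nabla_i R+R_{ij}\nabla^j f=0$, which is exactly \eqref{eq:3}.

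For \eqref{eq:4}, I would differentiate $|\nabla f|^2$ directly: by symmetry of the Hessian and then \eqref{eq:1},
\[
\nabla_i|\nabla f|^2=2\nabla^j f\,\nabla_i\nabla_j f=2R_{ij}\nabla^j f+\nabla_i f.
\]
Invoking \eqref{eq:3} to replace $2R_{ij}\nabla^j f$ by $-\nabla_i R$ yields $\nabla_i(|\nabla f|^2+R-f)=0$, so that $|\nabla f|^2+R-f$ is locally constant, hence globally constant by connectedness of $M^n$; this is \eqref{eq:4}.

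The whole computation is standard and the expanding normalization (the coefficient $1/2$) plays no essential role beyond bookkeeping; the same scheme yields the analogous identities for steady and shrinking solitons. The only point genuinely requiring care is the index manipulation in the second identity, namely justifying the swap $\nabla^j\nabla_i\nabla_j f=\nabla^j\nabla_j\nabla_i f$ (legitimate precisely because $\nabla_i\nabla_j f$ is symmetric before the extra derivative acts) and getting the correct sign and index placement of the curvature term in the Bochner commutation. I expect this to be the main, and indeed only, subtlety.
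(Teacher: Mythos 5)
Your proof is correct: all three steps (tracing \eqref{eq:1}; taking its divergence and combining the contracted second Bianchi identity with the commutation formula $\Delta\nabla_i f=\nabla_i\Delta f+R_{ij}\nabla^j f$, after which one copy of $\nabla_i R$ cancels; and integrating $\nabla\left(|\nabla f|^2+R-f\right)=0$ using connectedness) are accurate, including the index-symmetry justification of $\nabla^j\nabla_i\nabla_j f=\Delta\nabla_i f$ that you rightly flag as the only delicate point. The paper gives no proof of its own --- it recalls these identities with a citation to \cite{Ben} --- and your argument is precisely the standard derivation found in that literature, so there is nothing to compare beyond noting that your write-up fills in the details the paper omits.
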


In the following, we will assume w.l.o.g. $Cst=0$.

\begin{lemma}\label{distance}[Growth of the potential function]

Let $(M^n,g,\nabla f)$ be a complete EGS and $p\in M^n$. \\

1) If $R\geq -C$ where $C\geq 0$, then $f(x)+C\leq (r_p(x)/2+\sqrt{f(p)+C})^2$.\\

2) Under $(A1)$   then 
\begin{eqnarray*}
f(x)\geq \frac{r_p(x)^2}{4} -\frac{\pi}{2} Cr_p(x)+f(p)-\arrowvert\nabla f(p)\arrowvert.
\end{eqnarray*}

In particular, under $(A1)$, $ f$ behaves at infinity like $\frac{r_p(x)^2}{4}$.\\

3) If $\Ric \geq 0$ then 
\begin{eqnarray*}
\min_{M^n}f+\frac{r_p(x)^2}{4}\leq f(x)\leq (\frac{r_p(x)}{2}+\sqrt{\min_{M^n}f})^2.
\end{eqnarray*}
\end{lemma}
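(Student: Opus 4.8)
The plan is to derive all three estimates by restricting the soliton identities to a fixed unit-speed minimizing geodesic $\gamma:[0,r]\to M^n$ from $p$ to $x$, with $r=r_p(x)$, and then running one-dimensional comparison arguments. Writing $h(s):=f(\gamma(s))$, the soliton equation \eqref{eq:1} gives $h''(s)=\Hess(f)(\gamma'(s),\gamma'(s))=\Ric(\gamma'(s),\gamma'(s))+\tfrac12$, and since $\gamma$ is minimizing we have $r_p(\gamma(s))=s$; this is the single computation underlying every part.

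For part 1, I would not integrate $h''$ but instead use the first-order identity \eqref{eq:4}. With $Cst=0$ it reads $|\nabla f|^2=f-R$, so $R\geq -C$ forces $|\nabla f|^2=f-R\leq f+C$; note also that $f=|\nabla f|^2+R\geq -C$, whence $f+C\geq0$ everywhere. Setting $u(s):=h(s)+C\geq0$, I would estimate $u'(s)=\langle\nabla f,\gamma'\rangle\leq|\nabla f|\leq\sqrt{u(s)}$, so that $\tfrac{d}{ds}\sqrt{u}\leq\tfrac12$. Integrating from $0$ to $r$ yields $\sqrt{f(x)+C}\leq\sqrt{f(p)+C}+r/2$, and squaring gives the claim (the locus $u=0$ is treated by the usual one-sided argument).

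For part 2 I would integrate $h''$ twice. Assumption $(A1)$ together with $r_p(\gamma(s))=s$ gives $h''(s)\geq\tfrac12-\tfrac{C}{1+s^2}$. A first integration, using $h'(0)=\langle\nabla f(p),\gamma'(0)\rangle\geq-|\nabla f(p)|$ and $\int_0^s(1+t^2)^{-1}\,dt=\arctan s\leq\pi/2$, produces a lower bound for $h'$ whose leading term is $s/2$; a second integration then yields $f(x)\geq r^2/4-\tfrac{\pi}{2}Cr+f(p)-|\nabla f(p)|$, and in particular $f\sim r_p^2/4$ at infinity. The delicate point here, and the step I expect to be the main obstacle, is the bookkeeping of the lower-order terms: the crude bound $\arctan\leq\pi/2$ must be inserted so as to produce exactly the linear contribution $\tfrac{\pi}{2}Cr$ while keeping the remaining (subquadratic) terms absorbed into the stated constant; since these are all of lower order than $r^2/4$, the asymptotic conclusion is robust even if one prefers to track them more conservatively.

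For part 3, observe first that $\Ric\geq0$ and \eqref{eq:1} give $\Hess(f)\geq g/2>0$, so $f$ is strictly convex and proper and attains its minimum at a unique point, which I take to be the base point $p$; there $\nabla f(p)=0$. The upper bound is then part 1 with $C=0$ (as $R\geq0$), giving $f(x)\leq(r_p(x)/2+\sqrt{\min_{M^n}f})^2$. For the lower bound, the geodesic issuing from this $p$ has $h'(0)=0$ and $h''(s)\geq\tfrac12$, so two integrations give $f(x)\geq\min_{M^n}f+r_p(x)^2/4$. The only subtlety is to justify existence and uniqueness of the minimizer, which follows from strict convexity together with the quadratic growth already established in parts 1 and 2.
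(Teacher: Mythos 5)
Your argument is essentially the paper's own proof. Part 1 integrates $(2\sqrt{f\circ\gamma+C})'\leq 1$ along a minimizing geodesic exactly as the paper does; your double integration of $h''$ in part 2 is precisely the paper's ``Taylor--Young integral formula'' (the weighted remainder $\int_0^r(r-t)h''(t)\,dt$ is what two successive integrations produce); and part 3 is handled identically, by locating the unique minimum point $p_0$ of the proper, strictly convex function $f$ and applying parts 1 and 2 at $p_0$, where $\nabla f(p_0)=0$ and one may take $C=0$.

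However, the ``main obstacle'' you flag in part 2 is real, and you should not argue it away: integrating $h'(s)\geq -\arrowvert\nabla f(p)\arrowvert+\frac{s}{2}-\frac{\pi}{2}C$ a second time yields
$f(x)\geq \frac{r_p(x)^2}{4}-\frac{\pi}{2}Cr_p(x)+f(p)-\arrowvert\nabla f(p)\arrowvert\, r_p(x)$,
and the linear term $-\arrowvert\nabla f(p)\arrowvert\,r_p(x)$ cannot be absorbed into the constant $f(p)-\arrowvert\nabla f(p)\arrowvert$. In fact the inequality as stated in the lemma is false: on the Gaussian soliton $(\mathbb{R}^n,\eucl,\arrowvert x\arrowvert^2/4)$ one has $C=0$, and taking $\arrowvert p\arrowvert=2$ and $x=0$ gives $f(x)=0$ while the claimed lower bound equals $\frac{r_p(x)^2}{4}+f(p)-\arrowvert\nabla f(p)\arrowvert=1+1-1=1$. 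This is not a defect of your method but a slip in the paper itself: its Taylor formula omits the factor $r_p(x)$ multiplying $d_pf(\gamma'(0))$, and the correct conclusion of part 2 carries $-\arrowvert\nabla f(p)\arrowvert\,r_p(x)$. Nothing downstream is affected, since the paper (and your part 3, where the linear term vanishes because $\nabla f(p_0)=0$) only uses the asymptotic statement $f\sim r_p^2/4$, which survives the correction; but your write-up should state and prove the corrected inequality rather than promise the stated constant term.
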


\textbf{Proof}.
Let $p\in M^n$. Let $x\in M^n$ and $\gamma:[0,r_p(x)]\rightarrow M^n$ be a geodesic from $p$ to $x$.
If $R\geq -C$ then \eqref{eq:4} gives $(2\sqrt{f(\gamma(t))+C})'\leq 1.$
Therefore, after integration , we get $2\sqrt{f(x)+C}-2\sqrt{f(p)+C}\leq r_p(x).$
The result follows easily.\\
To get a lower bound for $f$, we apply the Taylor-Young integral formula to $f\circ\gamma$:
\begin{eqnarray*}
f(x)=f(p)+d_pf(\gamma'(0))+\int_{0}^{r_p(x)}(r_p(x)-t)\Hess f(\gamma'(t),\gamma'(t))dt.
\end{eqnarray*}
Using \eqref{eq:1} and $(A1)$, we get
\begin{eqnarray*}
f(x)\geq f(p)-\arrowvert\nabla f(p)\arrowvert+\frac{r_p(x)^2}{4}-C\int_{0}^{r_p(x)}\frac{r_p(x)-t}{1+t^2}dt.
\end{eqnarray*}
Hence the desired inequality.
To prove the last statement, note that under 3), $f$ is a strict convex function for $\Hess f\geq \frac{g}{2}$. Moreover, by $(A1)$, $f$ is a proper function ($C=0$). Therefore $f$ attains its minimum at a unique point $p_0\in M^n$. Now, it suffices to apply the previous results to $f$ and $p_0$.

 \hfill$\square$\\

\begin{rk}
  Note that the bound on the Ricci curvature is not optimal to get a growth of type $r_p(x)^2/4$.
\end{rk}

Under assumptions of lemma \ref{distance} and  using \eqref{eq:4}, the levels of $f$, $M_t:=f^{-1}(t)$, are well-defined compact hypersurfaces for $t>0$ large enough, in particular, they have a finite number of connected components. We will also denote the sublevels (resp. superlevels) of $f$ by $M_{\leq t}:=f^{-1}(]-\infty,t])$ (resp. $M_{\geq t}:=f^{-1}([t,+\infty[)$). $g_t$ will stand for the metric induced on $M_t$ by the ambient metric $g$. 

The next lemma is concerned with the volume of the sublevels of $f$. In case of nonnegative scalar curvature, this lemma has already been proved by several authors: see \cite{Car-Ni}, \cite{Zha-Shi} for instance. See also \cite{Che-Chi}.

\begin{lemma}\label{vol-sub}[Volume of sub-levels of $f$]

Let $(M^n,g,\nabla f)$ be a complete EGS satisfying $(A1)$.

Then, for $1<<t_0\leq t$,
\begin{eqnarray}
\frac{\vol M_{\leq t}}{\vol M_{\leq t_0}}\geq \left(\frac{t+nC} {t_0+nC}\right)^{(n/2-nC)}.
\end{eqnarray}

Moreover, if $R\geq 0$ then $\displaystyle t\rightarrow \frac{\vol M_{\leq t}}{t^{n/2}}$ is a nondecreasing function for $t$ large enough.
\end{lemma}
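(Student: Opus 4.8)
The plan is to reduce everything to a single first-order identity for the volume function $V(t):=\vol M_{\le t}$, and then feed in the curvature hypothesis. For $t$ large, lemma \ref{distance} forces the quadratic growth $f\sim r_p^2/4$, so by \eqref{eq:4} we have $|\nabla f|^2=f-R=t-R$ on $M_t$, which does not vanish; hence $M_t$ is a regular level set and both the coarea formula and the divergence theorem apply on the compact domain $M_{\le t}$ (whose boundary is exactly $M_t$, with outward normal $\nu=\nabla f/|\nabla f|$). Applying the divergence theorem to $\nabla f$ and using \eqref{eq:2} gives
\begin{eqnarray*}
\int_{M_{\le t}}\left(R+\frac n2\right)dV=\int_{M_t}|\nabla f|\,dA .
\end{eqnarray*}
On the other hand, the coarea formula yields $V'(t)=\int_{M_t}|\nabla f|^{-1}\,dA$, and writing $|\nabla f|=|\nabla f|^2/|\nabla f|=(t-R)/|\nabla f|$ on $M_t$ (again by \eqref{eq:4}) turns the right-hand side above into $tV'(t)-\int_{M_t}R|\nabla f|^{-1}\,dA$. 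Equating the two expressions produces the key identity
\begin{eqnarray*}
tV'(t)=\frac n2 V(t)+\int_{M_{\le t}}R\,dV+\int_{M_t}\frac{R}{|\nabla f|}\,dA .
\end{eqnarray*}

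The second statement is then immediate: if $R\ge0$ the two integral terms are nonnegative, so $tV'(t)\ge\frac n2 V(t)$, which is exactly the statement that $\bigl(V(t)/t^{n/2}\bigr)'\ge0$ for $t$ large. So the work is entirely in the first statement.

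For the first statement I would use $(A1)$ in its traced form $R=\trace\Ric\ge -nC/(1+r_p^2)$ to bound the two error integrals from below. On $M_t$ the point is that lemma \ref{distance}(1) gives $r_p\ge 2\sqrt{t}-O(1)$, hence $1+r_p^2\ge t$ for $t$ large, so the boundary term obeys $\int_{M_t}R|\nabla f|^{-1}\,dA\ge -\tfrac{nC}{t}V'(t)$. The bulk term is treated by feeding the lower bound $1+r_p^2\gtrsim 4f$ (again from lemma \ref{distance}) into the coarea formula: away from a fixed core $\{r_p\le \mathrm{const}\}$ one has $\int \frac{dV}{1+r_p^2}\lesssim \int \frac{V'(s)}{s}\,ds$, and an integration by parts together with the monotonicity of $V$ bounds this by a controlled multiple of $V(t)$, the core contributing only a fixed constant. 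Collecting these estimates converts the identity into the differential inequality $V'(t)(t+nC)\ge(n/2-nC)V(t)$ for $t\ge t_0$ with $t_0$ large; integrating $\frac{d}{dt}\log V\ge\frac{n/2-nC}{t+nC}$ from $t_0$ to $t$ then gives the claimed ratio bound.

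The main obstacle is precisely this bulk term $\int_{M_{\le t}}R\,dV$. Assumption $(A1)$ only controls $R$ through $1/(1+r_p^2)$, which is of order one near $p$, so there is no useful pointwise lower bound for $R$ on the whole sublevel, and a naive estimate loses too much. The resolution is to trade the pointwise Ricci bound for an integral one, exploiting the quadratic growth of $f$ to make $1/(1+r_p^2)$ integrable against $dV$ at exactly the rate needed, and to absorb the bounded contribution of the core region by requiring $t_0\gg1$ (which is why the statement is phrased for $1\ll t_0\le t$). Carefully tracking the constants produced by $(A1)$ in this step is what yields the precise shift $t\mapsto t+nC$ and the reduced exponent $n/2-nC$, degenerating to $n/2$ and $t$ in the flat case $C=0$.
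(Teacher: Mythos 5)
Your key identity for $V(t):=\vol M_{\le t}$,
\begin{equation*}
tV'(t)=\frac{n}{2}V(t)+\int_{M_{\le t}}R\,dV+\int_{M_t}\frac{R}{\arrowvert\nabla f\arrowvert}\,dA,
\end{equation*}
is correct --- it is the exact form of the computation the paper runs with inequalities --- and your proof of the monotonicity statement under $R\ge0$ is fine, as is the final integration step. The gap is in the first statement, precisely at the step you single out as the main obstacle. Because you insist on using the decaying form $R\ge-nC/(1+r_p^2)$ of $(A1)$, the core region contributes a genuinely \emph{additive} error: your estimates produce an inequality of the form
\begin{equation*}
\Bigl(t+\tfrac{nC}{t}\Bigr)V'(t)\;\ge\;\Bigl(\tfrac{n}{2}-\tfrac{nC}{t_1}\Bigr)V(t)-nC\,c_0,
\end{equation*}
where $c_0>0$ is a fixed constant (the core integral). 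To pass from this to the claimed differential inequality $(t+nC)V'\ge(n/2-nC)V$ you must absorb $nC\,c_0$ into the slack $nC(1-1/t_1)V(t)$, i.e.\ you need $V(t)\ge c_0/(1-1/t_1)$ for all $t\ge t_0$. Taking ``$t_0\gg1$'' does \emph{not} provide this: $V$ is only known to be nondecreasing, and a lower bound of this type is an a priori volume-growth statement which is essentially what the lemma is meant to prove; nothing in your argument excludes, say, $\vol(M^n)<+\infty$ with $V$ stalling below the threshold. (One can close this loop by a separate contradiction argument in the bounded-volume case, using $\int_M(1+r_p^2)^{-1}dV<\vol(M^n)$ together with $C<1/2$, the case $C\ge1/2$ being trivial since then the right-hand side of the stated inequality is $\le1$; but no such argument appears in your proof.) A telltale sign of the mismatch: your refined estimates would yield the coefficient $n/2-nC/t_1$, not $n/2-nC$, so ``carefully tracking the constants'' in your scheme cannot be what produces the exponent in the statement.

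The repair is to renounce the decay entirely, which is what the paper does: trace $(A1)$ and use $1/(1+r_p^2)\le1$ to get the global pointwise bound $R\ge-nC$. Then the bulk term of your identity is $\ge-nC\,V(t)$ and the boundary term is $\ge-nC\,V'(t)$ (since $\int_{M_t}\arrowvert\nabla f\arrowvert^{-1}dA=V'(t)$), and you obtain $(t+nC)V'(t)\ge(n/2-nC)V(t)$ directly, with no additive constant; integrating as you indicate finishes the proof. This also explains where the constants in the statement really come from: the shift $t+nC$ and the reduced exponent $n/2-nC$ are artifacts of the crude bound $R\ge-nC$, not of the quadratic decay, and the hypothesis $1\ll t_0$ is there to guarantee that the levels $M_t$ are compact regular hypersurfaces (so that the divergence theorem and the coarea formula apply), not to absorb core contributions.
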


\textbf{Proof}.
By assumption and \eqref{eq:2}, $-nC\leq R=\Delta f-n/2$. After integrating these inequalities over $M_{\leq t}$, using \eqref{eq:4} and integration by part, one has

\begin{eqnarray*}
-nC\vol M_{\leq t}\leq\int_{M_t}\arrowvert \nabla f\arrowvert-\frac{n}{2}\vol M_{\leq t}
\leq\sqrt{t-\inf_{M_t}R}\vol M_t-\frac{n}{2}\vol M_{\leq t}.
\end{eqnarray*}

Now,
\begin{eqnarray*}
\frac{d}{dt}\vol M_{\leq t}=\int_{M_t}\frac{d\mu_t}{\arrowvert \nabla f\arrowvert}\geq \frac{\vol M_t}{\sqrt{t-\inf_{M_t}R}}.
\end{eqnarray*} 
Hence,
\begin{eqnarray*}
(\frac{n}{2}-nC)\vol M_{\leq t}\leq (t-\inf_{M_t}R)\frac{d}{dt}\vol M_{\leq t}\leq (t+nC)\frac{d}{dt}\vol M_{\leq t}.
\end{eqnarray*}
 The first inequality follows by integrating this differential inequality.
 
 The last statement is obtained by letting $C=0$ in the previous estimates.
 The lower Ricci bound is only used to ensure the existence of the levels of $f$.

\hfill$\square$\\

We pursue by estimating the volume of the hypersurfaces $M_t$.

\begin{lemma}\label{vol-hyper}[Volume of levels I]

Let $(M^n,g,\nabla f)$ be a complete EGS satisfying $(A1)$.

Then, for $1<<t_0\leq t$, there exists a function $a\in L^1(+\infty)$ such that 
\begin{eqnarray}
\displaystyle \frac{\vol M_t}{\vol M_{t_0}}\geq \exp\left(\int_{t_0}^ta(s)ds\right)\left(\frac{t}{t_0}\right)^{\frac{n-1}{2}}.
\end{eqnarray}

Moreover, if $\Ric\geq 0$ then $\displaystyle t\rightarrow \frac{\vol M_t}{t^{(n-1)/2}}$ is a nondecreasing function for $t>\min_{M^n}f$.

\end{lemma}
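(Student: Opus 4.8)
The plan is to compute the first variation of the area element of the level sets $M_t=f^{-1}(t)$ under their natural normal flow, read off the Euclidean rate $t^{(n-1)/2}$ from the soliton equation, and absorb the remaining terms into the $L^1$ correction $a$. First I would flow the levels by the vector field $V:=\nabla f/|\nabla f|^2$, which satisfies $V(f)=1$ and hence maps $M_{t_0}$ diffeomorphically onto $M_t$; its speed along the outward unit normal $\nu:=\nabla f/|\nabla f|$ equals $1/|\nabla f|$. The standard first variation formula then gives $\frac{d}{dt}d\mu_t=\frac{H}{|\nabla f|}d\mu_t$, where $H=\trace\mathrm{II}$ is the mean curvature of $M_t$. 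Since $\mathrm{II}(X,Y)=\Hess f(X,Y)/|\nabla f|$ for $X,Y$ tangent to $M_t$, equation \eqref{eq:1} together with \eqref{eq:2} yields $H|\nabla f|=\Delta f-\Hess f(\nu,\nu)=R-\Ric(\nu,\nu)+\tfrac{n-1}{2}$. Writing $\mathrm{Ric}_{\mathrm{tan}}:=R-\Ric(\nu,\nu)=\sum_{i=1}^{n-1}\Ric(e_i,e_i)$ for the trace of $\Ric$ on $TM_t$ and using $|\nabla f|^2=t-R$ on $M_t$ from \eqref{eq:4}, integration over $M_t$ gives
\begin{eqnarray*}
\frac{d}{dt}\vol M_t=\int_{M_t}\frac{\mathrm{Ric}_{\mathrm{tan}}+\frac{n-1}{2}}{t-R}\,d\mu_t.
\end{eqnarray*}

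Next I would extract the Euclidean rate. The case $\Ric\geq0$ is immediate: then $\mathrm{Ric}_{\mathrm{tan}}\geq0$ and $0\leq R\leq t$, so $t-R\leq t$ forces $\frac{d}{dt}\vol M_t\geq\frac{n-1}{2t}\vol M_t$, i.e. $t\mapsto(\vol M_t)\,t^{-(n-1)/2}$ is nondecreasing for $t>\min_{M^n}f$, where $f$ is proper and attains its minimum by lemma \ref{distance}(3). For the general case under $(A1)$ I would split into the main term $\frac{n-1}{2}\int_{M_t}(t-R)^{-1}d\mu_t$ and the error $\int_{M_t}\mathrm{Ric}_{\mathrm{tan}}(t-R)^{-1}d\mu_t$. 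The uniform bound $R\geq-nC$ gives $\frac{n-1}{2(t-R)}\geq\frac{n-1}{2(t+nC)}=\frac{n-1}{2t}-\frac{(n-1)nC}{2t(t+nC)}$, whose correction is $O(t^{-2})$ and hence lies in $L^1(+\infty)$; after dividing by $\vol M_t$ this already produces the factor $(t/t_0)^{(n-1)/2}$ together with one $L^1$ contribution to $a$.

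The main obstacle is the error term: $\mathrm{Ric}_{\mathrm{tan}}$ may be negative, while the factor $(t-R)^{-1}=|\nabla f|^{-2}$ could a priori blow up on $M_t$. I would control it by a lower bound on $|\nabla f|$: integrating $\frac{d}{ds}(f\circ\gamma)=\langle\nabla f(p),\gamma'(0)\rangle+\int_0^s\Hess f(\gamma',\gamma')$ along a minimizing geodesic $\gamma$ from $p$ and invoking $(A1)$ exactly as in lemma \ref{distance} gives $|\nabla f(x)|\geq\frac{r_p(x)}{2}-\frac{\pi C}{2}-|\nabla f(p)|$, while lemma \ref{distance} also yields $r_p(x)\asymp2\sqrt{t}$ on $M_t$. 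Hence $|\nabla f|^2=t-R\geq t/2$ for $t$ large, and in particular $r_p^2\gtrsim t$ there. Combining this with the decaying form of $(A1)$, namely $\mathrm{Ric}_{\mathrm{tan}}\geq-\frac{(n-1)C}{1+r_p^2}\geq-\frac{c}{t}$, bounds the error from below pointwise by $-2c/t^2$, giving $\int_{M_t}\mathrm{Ric}_{\mathrm{tan}}(t-R)^{-1}d\mu_t\geq-\frac{2c}{t^2}\vol M_t$, a second $L^1$ contribution.

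Finally I would set $a$ equal to minus the sum of the two $O(t^{-2})$ corrections, so that $a\in L^1(+\infty)$ and
\begin{eqnarray*}
\frac{d}{dt}\log\vol M_t\geq\frac{n-1}{2t}+a(t).
\end{eqnarray*}
Integrating this differential inequality from $t_0$ to $t$ gives $\log\frac{\vol M_t}{\vol M_{t_0}}\geq\frac{n-1}{2}\log\frac{t}{t_0}+\int_{t_0}^ta(s)\,ds$, which is exactly the claimed estimate. I expect the only delicate point to be the uniform lower bound $|\nabla f|^2\geq t/2$ on $M_t$, since everything else is a routine manipulation of the first variation formula and the soliton identities; but this bound follows cleanly from the Taylor–Young argument already underlying lemma \ref{distance}.
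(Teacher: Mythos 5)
Your proof is correct and follows essentially the same route as the paper's: the first variation of $\vol M_t$ under the flow of $\nabla f/\arrowvert\nabla f\arrowvert^2$, the identity $H_t\arrowvert\nabla f\arrowvert=R-\Ric(\textbf{n},\textbf{n})+\frac{n-1}{2}$ coming from \eqref{eq:1} and \eqref{eq:2}, division by $\arrowvert\nabla f\arrowvert^2=t-R$, and extraction of the Euclidean rate $\frac{n-1}{2t}$ with an $L^1$ error controlled by $(A1)$ and lemma \ref{distance}. The only (minor, and welcome) difference is that you make explicit the pointwise bound $t-R=\arrowvert\nabla f\arrowvert^2\geq t/2$ on $M_t$ via the radial estimate $\arrowvert\nabla f\arrowvert\geq r_p/2-O(1)$, a point the paper leaves implicit in its assertion that $a\in L^1(+\infty)$.
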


\textbf{Proof}.
The second fundamental form of $M_t$ is $\displaystyle h_t=\frac{\Hess f}{\arrowvert \nabla f\arrowvert}.$
Now, 
\begin{eqnarray*}
\frac{d}{dt}\vol M_t=\int_{M_t}\frac{H_t}{\arrowvert \nabla f\arrowvert} d\mu_t,
\end{eqnarray*}
where $H_t$ is the mean curvature of the hypersurface $M_t$.
Hence,
\begin{eqnarray*}
\frac{d}{dt}\vol M_t=\int_{M_t}\frac{R-\Ric(\textbf{n},\textbf{n})+\frac{n-1}{2}}{t-R}d\mu_t\geq \frac{(n-1)\inf_{M_t}\Ric+\frac{n-1}{2}}{t-\inf_{M_t}R}\vol M_t,
\end{eqnarray*}
where $\textbf{n}=\nabla f/\arrowvert \nabla f\arrowvert$.
Therefore,
\begin{eqnarray*}
\ln\frac{\vol M_t}{\vol M_{t_0}}&\geq&\int_{t_0}^t\frac{(n-1)(\inf_{M_s}\Ric+\inf_{M_s}R/{2s})}{s-\inf_{M_s}R}ds+\frac{n-1}{2}\ln\left(\frac{t}{t_0}\right)\\
&=&\int_{t_0}^ta(s)ds+\frac{n-1}{2}\ln\left(\frac{t}{t_0}\right),
\end{eqnarray*}
where $a(s):=\displaystyle{ \frac{(n-1)(\inf_{M_s}\Ric+\inf_{M_s}R/{2s})}{s-\inf_{M_s}R}}$.

In view of the lower Ricci bound and lemma \ref{distance}, one has $a\in L^1(+\infty).$ The desired inequality and the case $C=0$ now follow easily.

\hfill$\square$\\

\begin{lemma}\label{vol-hyperII}[Volume of the levels II]

Let $(M^n,g,\nabla f)$ be a complete EGS satisfying $(A2)$.

Then, for $1<<t_0\leq t$, there exists a function $b\in L^1(+\infty)$ such that 
\begin{eqnarray}
\displaystyle \frac{\vol M_t}{\vol M_{t_0}}\leq \exp\left(\int_{t_0}^tb(s)ds\right)\left(\frac{t}{t_0}\right)^{\frac{n-1}{2}}.
\end{eqnarray}
\end{lemma}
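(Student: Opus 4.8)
The plan is to follow the proof of Lemma \ref{vol-hyper} essentially verbatim, but reversing every inequality, which is now legitimate because $(A2)$ supplies a \emph{two-sided} bound on $\Ric$, hence on $R$. First I would reproduce the first variation computation of that lemma: writing $\textbf{n}=\nabla f/\arrowvert\nabla f\arrowvert$, the second fundamental form of $M_t$ is $h_t=\Hess f/\arrowvert\nabla f\arrowvert$, and combining $\Hess f=\Ric+g/2$ with $\arrowvert\nabla f\arrowvert^2=t-R$ on $M_t$ (from \eqref{eq:4}) yields
\begin{eqnarray*}
\frac{d}{dt}\vol M_t=\int_{M_t}\frac{R-\Ric(\textbf{n},\textbf{n})+\frac{n-1}{2}}{t-R}\,d\mu_t.
\end{eqnarray*}

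Next, instead of bounding the integrand from below, I would bound it from above. The numerator is a sum of $n-1$ tangential Ricci terms, so $R-\Ric(\textbf{n},\textbf{n})\leq(n-1)\sup_{M_t}\Ric$; the denominator obeys $t-R\geq t-\sup_{M_t}R>0$ for $t$ large, since $(A2)$ forces $R\to0$ at infinity. This gives the differential inequality
\begin{eqnarray*}
\frac{d}{dt}\vol M_t\leq\frac{(n-1)\sup_{M_t}\Ric+\frac{n-1}{2}}{t-\sup_{M_t}R}\,\vol M_t.
\end{eqnarray*}
Integrating and isolating the Euclidean factor exactly as in Lemma \ref{vol-hyper} produces
\begin{eqnarray*}
\ln\frac{\vol M_t}{\vol M_{t_0}}\leq\int_{t_0}^tb(s)\,ds+\frac{n-1}{2}\ln\left(\frac{t}{t_0}\right),\qquad b(s):=\frac{(n-1)\,s\,\sup_{M_s}\Ric+\frac{n-1}{2}\sup_{M_s}R}{s\,(s-\sup_{M_s}R)},
\end{eqnarray*}
so the claimed estimate follows once I check that $b\in L^1(+\infty)$.

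The integrability of $b$ is the only genuinely substantive step, and it is where the full strength of $(A2)$ (as opposed to $(A1)$) is used. By Lemma \ref{distance}, under $(A2)$ the level $M_s=f^{-1}(s)$ sits at distance $r_p\sim2\sqrt{s}$ from $p$, so $(A2)$ yields $\sup_{M_s}\arrowvert\Ric\arrowvert=O(1/s)$ and consequently $\sup_{M_s}\arrowvert R\arrowvert\leq n\sup_{M_s}\arrowvert\Ric\arrowvert=O(1/s)$. Hence the numerator of $b(s)$ is $O(1)$ (the term $s\sup_{M_s}\Ric$ stays bounded and $\sup_{M_s}R\to0$), while the denominator is $\sim s^2$, so $b(s)=O(1/s^2)$ and $b\in L^1(+\infty)$. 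I expect this matching of the quadratic curvature decay of $(A2)$ with the quadratic growth $f\sim r_p^2/4$ of Lemma \ref{distance} to be the crux: it is the precise agreement of these two quadratic behaviours that makes $b$ integrable, whereas the one-sided $(A1)$ bound controls only the sign needed in Lemma \ref{vol-hyper} and not this upper estimate. Everything else is a sign-for-sign transcription of the earlier proof.
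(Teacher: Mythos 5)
Your proposal is correct and coincides with the paper's approach: the paper's proof of this lemma is just the remark that it ``goes along the same line'' as Lemma \ref{vol-hyper} with the inequalities reversed, the lower Ricci bound in $(A2)$ being used only to guarantee the existence of the compact levels $M_t$ via Lemma \ref{distance}. Your explicit formula for $b(s)$ and the verification that $b(s)=O(1/s^2)$, hence $b\in L^1(+\infty)$, merely fill in the details the paper leaves implicit.
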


\textbf{Proof}.
The proof goes along the same line of the previous one.
Here, the lower Ricci bound is only used to ensure the existence of the hypersurfaces $M_t$ for $t$ large enough by lemma \ref{distance}.

\hfill$\square$\\
\begin{lemma}\label{diam}[Diameter growth]

Let $(M^n,g,\nabla f)$ be a complete EGS satisfying $(A2)$.

Then, for $1<<t_0\leq t$, there exists a function $c\in L^1(+\infty)$ such that 
\begin{eqnarray}
\frac{\diam(g_t)}{\sqrt{t}}\leq \exp\left(\int_{t_0}^tc(s)ds\right)\frac{\diam(g_{t_0})}{\sqrt{t_0}}.
\end{eqnarray}

Assume only $\Ric\geq 0$ then $\displaystyle t\rightarrow \frac{\diam(g_t)}{\sqrt{t}}$ is a nondecreasing function for $t>\min_{M^n}f$.
\end{lemma}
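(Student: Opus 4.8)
The plan is to compare consecutive level sets through the flow of the rescaled gradient field and to track how the lengths of curves evolve along it. Set $X:=\nabla f/|\nabla f|^2$ and let $(\psi_s)_s$ be its flow. Since $df(X)=1$ we have $f(\psi_s(x))=f(x)+s$, so $\psi_{t-t_0}$ restricts to a diffeomorphism from $M_{t_0}$ onto $M_t$ that preserves connected components. This flow is well defined and complete on $M_{\geq t_0}$ for $t_0\gg 1$: by \eqref{eq:4} one has $|\nabla f|^2=f-R=s-R$ on $M_s$, which is positive and tends to $+\infty$ since $R$ is bounded under $(A2)$; moreover $f$ is proper by lemma \ref{distance}, so the level sets stay compact and every flow line reaches every level.

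First I would compute the evolution of the induced metric. For $Y,Z$ tangent to a level set only the tangential part of $\nabla_Y X$ contributes, giving $(\mathcal{L}_X g)(Y,Z)=2\,\Hess f(Y,Z)/|\nabla f|^2$. Since the second fundamental form is $h_t=\Hess f/|\nabla f|$ and \eqref{eq:1} gives $\Hess f=\Ric+g/2$, this yields along the flow
\[
\partial_t g_t=\frac{2\Ric+g_t}{|\nabla f|^2}\qquad\text{on }TM_t .
\]
Consequently, transporting a fixed tangent vector along a flow line and writing $T$ for the $g$-unit tangent to the transported curve,
\[
\frac{d}{dt}\log|\dot\gamma_t|_g=\frac{\Ric(T,T)+\tfrac12}{|\nabla f|^2}=\frac{\Ric(T,T)+\tfrac12}{t-R}.
\]

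Next I would bound this pointwise by $\tfrac{1}{2t}+c(t)$ with $c\in L^1(+\infty)$. Under $(A2)$, $|\Ric|\leq C/(1+r_p^2)$, while lemma \ref{distance} gives $r_p^2\geq 4t-O(\sqrt t)$ uniformly on $M_t$, so $\sup_{M_t}|\Ric|=O(1/t)$ and $\sup_{M_t}|R|=O(1/t)$. Taking
\[
c(s):=\frac{\sup_{M_s}|\Ric|}{s-\sup_{M_s}|R|}+\frac12\left(\frac{1}{s-\sup_{M_s}|R|}-\frac1s\right),
\]
both summands are $O(1/s^2)$, hence $c\in L^1(+\infty)$ and the integrand is $\leq \tfrac{1}{2s}+c(s)$. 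Integrating from $t_0$ to $t$ along each flow line and then over a $g_{t_0}$-minimizing geodesic $\sigma$ joining two points of $M_{t_0}$ gives $L_{g_t}(\psi_{t-t_0}\circ\sigma)\leq \sqrt{t/t_0}\,\exp\!\big(\int_{t_0}^t c\big)\,L_{g_{t_0}}(\sigma)$. As $\psi_{t-t_0}$ is onto $M_t$, every pair of points of $M_t$ is the image of such a pair, and the supremum over pairs yields the claimed bound on $\diam(g_t)/\sqrt t$. For the monotonicity under $\Ric\geq 0$ (then $C=0$, $R\geq 0$, and $M_t$ is regular for $t>\min f$), the same identity gives $\frac{d}{dt}\log|\dot\gamma_t|_g\geq \tfrac1t$, so lengths are multiplied by at least $\sqrt{t/t_0}$ along the flow. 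Flowing a diameter-realizing geodesic of $M_{t_0}$ forward to $x_t,y_t\in M_t$ and flowing any competitor curve from $x_t$ to $y_t$ back (its $g_{t_0}$-length is $\geq\diam(g_{t_0})$), this lower bound forces $d_{g_t}(x_t,y_t)\geq\sqrt{t/t_0}\,\diam(g_{t_0})$, so $\diam(g_t)/\sqrt t$ is nondecreasing.

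The main obstacle I expect is the uniform control of the integrand over each entire level set: converting the pointwise decay $|\Ric|\leq C/(1+r_p^2)$ into a clean $1/t$ bound requires comparing the ambient distance $r_p$ with the value $t=f$ uniformly on $M_t$, which is precisely where the two-sided growth estimates of lemma \ref{distance} enter, and then checking that the resulting correction is genuinely integrable. A secondary subtlety is the min--max bookkeeping in the monotonicity part, since a diameter statement must hold for all pairs whereas distances are infima over connecting curves; it is the fact that $\psi_{t-t_0}$ is a component-preserving diffeomorphism of the level sets that makes this argument legitimate.
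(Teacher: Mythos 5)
Your proposal is correct and follows essentially the same route as the paper: both flow the level sets by $\nabla f/|\nabla f|^{2}$, differentiate the induced metric along the flow using $\Hess f=\Ric+g/2$ and $|\nabla f|^{2}=t-R$, and split the resulting logarithmic derivative into $1/(2t)$ plus an $L^{1}$ error term controlled via $(A2)$ and lemma \ref{distance} (your treatment of the flow's well-definedness and of the min--max step in the monotonicity claim is in fact more explicit than the paper's terse "same arguments"). One harmless slip: in the $\Ric\geq 0$ case the displayed bound should read $\frac{d}{dt}\log|\dot\gamma_{t}|_{g}\geq\frac{1}{2t}$ rather than $\frac{1}{t}$, which is exactly what your stated conclusion of expansion by the factor $\sqrt{t/t_{0}}$ uses.
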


\textbf{Proof}.
Let $\phi_t$ be the flow associated to the vector field $\nabla f/ \arrowvert \nabla f\arrowvert^2$.
If $v\in TM_{t_0}$, define $V(t):=d\phi_{t-t_0}(v)\in TM_t$ for $1<<t_0\leq t$.
Now,
\begin{eqnarray*}
\frac{d}{dt} g(V(t),V(t))=2\frac{\Hess f(V(t),V(t))}{\arrowvert\nabla f\arrowvert^2}=2\frac{\Ric(V(t),V(t))+g(V(t),V(t))/2}{t-R}.
\end{eqnarray*}
Hence,
\begin{eqnarray*}
\frac{d}{dt} g(V(t),V(t))&\leq&\frac{2\sup_{M_t}\Ric+1}{t-\sup_{M_t}R}g(V(t),V(t))\\
&=&\left(c(t)+\frac{1}{t}\right)g(V(t),V(t)),
\end{eqnarray*}
where $c(t):=\displaystyle \frac{2\sup_{M_t}\Ric+\sup_{M_t}R/t}{t-\sup_{M_t}R}.$

 Therefore,
\begin{eqnarray*}
\ln\left(\frac{g(V(t),V(t))}{g(V(t_0),V(t_0))}\right) \leq \int_{t_0}^tc(s)ds+\ln\left(\frac{t}{t_0}\right).
\end{eqnarray*}
In view of the upper Ricci bound, one can see that $c\in L^1(+\infty).$
The desired estimate is now immediate.

The proof of the last assertion use the same arguments.

\hfill$\square$\\

According to the growth of $f$ in case of finite asymptotic ratio ($\A(g)<+\infty$), the hypersurface $M_{t^2/4}$ "looks like" the geodesic sphere $S_t$ of radius $t$. Therefore, the next lemma deals with curvature bounds of the levels $M_{t^2/4}$ for $t$ large.

\begin{lemma}\label{deriv}
Let $(M^n,g,\nabla f)$ be a complete EGS with finite $\A(g)$.

Then,
\begin{eqnarray}
\limsup_{t\rightarrow+\infty} \arrowvert  \Rm(t^{-2}g_{t^2/4})-\Id_{\Lambda^2}\arrowvert&\leq& \A(g), \label{eq:curvop} \\ 
1-\A(g)\leq \liminf_{t\rightarrow+\infty} K_{t^{-2}g_{t^2/4}}&\leq&\limsup_{t\rightarrow+\infty} K_{t^{-2}g_{t^2/4}}\leq 1+\A(g)\label{eq:alex2},
\end{eqnarray}
In case of nonnegative sectional curvature, one has 
\begin{eqnarray}
1\leq \liminf_{t\rightarrow+\infty} K_{t^{-2}g_{t^2/4}}&\leq&\limsup_{t\rightarrow+\infty} K_{t^{-2}g_{t^2/4}}\leq 1+\A(g).
\end{eqnarray}
\end{lemma}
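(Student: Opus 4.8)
The plan is to use the Gauss equation to transfer information about the ambient curvature $\Rm(g)$ and about the second fundamental form of the level $M_{t^2/4}$ to the intrinsic curvature of $(M_{t^2/4},g_{t^2/4})$, and then to rescale. Recall from the proof of Lemma \ref{vol-hyper} that the shape operator of $M_t$ is $h_t=\Hess f/|\nabla f|$, which by the soliton equation \eqref{eq:1} equals $(\Ric+g/2)/|\nabla f|$. Thus for vectors $X,Y$ tangent to $M_{t^2/4}$ and orthonormal with respect to $g$, the Gauss equation reads
\begin{eqnarray*}
K^{M_{t^2/4}}(X,Y)=\Rm(g)(X,Y,Y,X)+h(X,X)h(Y,Y)-h(X,Y)^2,
\end{eqnarray*}
and the full intrinsic curvature operator is the tangential part of $\Rm(g)$ plus the bivector operator built from $h$.

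First I would record the asymptotics of these ingredients on $M_{t^2/4}$. Since finite $\A(g)$ implies $(A2)$, Lemma \ref{distance} gives $f\sim r_p^2/4$, so on $\{f=t^2/4\}$ the quantity $r_p(x)$ is comparable to $t$ uniformly, and $r_p\to+\infty$ as $t\to+\infty$. Consequently $|\Ric|\leq C/(1+r_p^2)=O(t^{-2})$ and $R=O(t^{-2})$, while by \eqref{eq:4} one has $|\nabla f|^2=t^2/4-R$, hence $|\nabla f|=(t/2)(1+o(1))$. Therefore the shape operator satisfies $h=(g/2+\Ric)/|\nabla f|=t^{-1}(g+o(1))$, so that $h(X,X)h(Y,Y)-h(X,Y)^2=t^{-2}(1+o(1))$ uniformly on $M_{t^2/4}$. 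Moreover, by the definition of $\A(g)$, for every $\epsilon>0$ and all $t$ large, $|\Rm(g)|\leq(\A(g)+\epsilon)/r_p^2\leq(\A(g)+\epsilon+o(1))/t^2$ on $M_{t^2/4}$.

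Then I would rescale. Under $g\mapsto t^{-2}g$ the sectional curvature is multiplied by $t^2$, so $K_{t^{-2}g_{t^2/4}}=t^2K^{M_{t^2/4}}$. Multiplying the Gauss equation by $t^2$ yields $t^2K^{M_{t^2/4}}(X,Y)=t^2\Rm(g)(X,Y,Y,X)+1+o(1)$, where the first term has absolute value at most $\A(g)+\epsilon+o(1)$. Letting $t\to+\infty$ and then $\epsilon\to0$ gives \eqref{eq:alex2}; the nonnegative sectional curvature case is identical since then $t^2\Rm(g)(X,Y,Y,X)\geq0$ forces the liminf to be at least $1$. For the curvature operator bound \eqref{eq:curvop}, the same rescaling multiplies the intrinsic curvature operator by $t^2$: the $h$-contribution is that of the round unit sphere up to lower order, namely $\Id_{\Lambda^2}+o(1)$, while the tangential ambient term has operator norm at most $\A(g)+\epsilon+o(1)$, so passing to the limit gives the estimate.

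The main obstacle is the \emph{uniformity} over the whole level set: one must know that $r_p(x)\asymp t$ holds uniformly for $x\in M_{t^2/4}$, so that the pointwise bound $|\Rm(g)|\leq(\A(g)+\epsilon)/r_p^2$ converts into the single bound $(\A(g)+\epsilon+o(1))/t^2$ used above. This is exactly what the two-sided growth estimate for $f$ in Lemma \ref{distance} provides, the lower bound forcing $r_p\gtrsim t$ and the upper bound forcing $r_p\lesssim t$. Once this uniform comparison is in place, controlling the $o(1)$ errors coming from $\Ric$, from $R$, and from $|\nabla f|-t/2$ is routine.
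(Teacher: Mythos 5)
Your proposal is correct and takes essentially the same route as the paper's proof: the Gauss equation on the levels $M_{t^2/4}$ with shape operator $(\Ric+g/2)/\arrowvert\nabla f\arrowvert$, the asymptotics $r_p\asymp t$, $\arrowvert\nabla f\arrowvert\sim t/2$, $\Ric=O(t^{-2})$ coming from Lemma \ref{distance} and the finiteness of $\A(g)$, followed by rescaling by $t^{-2}$ and passing to the limit. The paper's own argument is simply a terser version of this, so your careful treatment of the uniformity over the level set only makes explicit what the paper leaves implicit.
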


\textbf{Proof}.\\
Take a look at Gauss equations applied to $M_{t^2/4}$.
\begin{eqnarray*}
 \Rm(g_{t^2/4})(X,Y)&=&\Rm(g)(X,Y)+\det h_{t^2/4}(X,Y)\\
 &=&\Rm(g)(X,Y)+\frac{\det(\Ric+g/2)}{\arrowvert \nabla f\arrowvert^2}\big|_{M_{t^2/4}}, 
 \end{eqnarray*}
where $X$ and $Y$ are tangent to $M_{t^2/4}$.
After rescaling the metric $g_{t^2/4}$ by $t^{-2}$, and using the fact that $t^2/\arrowvert\nabla f\arrowvert^2 \big|_{M_{t^2/4}}\rightarrow 0$ as $t\rightarrow +\infty$, we get all the desired inequalities.

\hfill$\square$\\

Using the classification result of \cite{Boh-Wil} together with inequality (\ref{eq:curvop}) of lemma \ref{deriv} , we get the following topological information:

\begin{coro}[Small asymptotic ratio]\label{small}

Let $(M^n,g,\nabla f)$, $n\geq3$ be a complete EGS such that $\A(g)<1$.

Then, outside a compact set $K$, $M^n\setminus K$ is a disjoint union of a finite number of ends, each end being diffeomorphic to $\mathbb{S}^{n-1}/\Gamma\times(0,+\infty)$ where $\mathbb{S}^{n-1}/\Gamma$ is a spherical space form.
\end{coro}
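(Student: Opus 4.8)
The plan is to combine the curvature estimate \eqref{eq:curvop} of lemma \ref{deriv} with the Böhm--Wilking classification \cite{Boh-Wil} of closed manifolds with positive curvature operator, and then to propagate the resulting information on a single level $M_{t^2/4}$ to each end via the gradient flow of $f$.

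First I would set up the level sets. Since $\A(g)<+\infty$ forces $(A2)$, lemma \ref{distance} gives that $f$ is proper and grows like $r_p^2/4$ at infinity; combined with \eqref{eq:4} (recall $Cst=0$) we have $\arrowvert\nabla f\arrowvert^2=f-R$. Under $(A2)$ the scalar curvature tends to $0$ while $f\to+\infty$, so $\arrowvert\nabla f\arrowvert^2=f-R\to+\infty$. Hence there is $t_1>0$ such that $\nabla f$ does not vanish on $M_{\geq t_1}$, and every $M_t$ with $t\geq t_1$ is a compact regular hypersurface with finitely many connected components.

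Next I would establish strict positivity of the curvature operator of the levels. By \eqref{eq:curvop}, $\limsup_{t\to+\infty}\arrowvert\Rm(t^{-2}g_{t^2/4})-\Id_{\Lambda^2}\arrowvert\leq\A(g)<1$. Thus, fixing $\epsilon>0$ with $\A(g)+\epsilon<1$, for all $t$ large every eigenvalue of the curvature operator of $(M_{t^2/4},t^{-2}g_{t^2/4})$ lies in the interval $[1-\A(g)-\epsilon,\,1+\A(g)+\epsilon]$, and in particular is bounded below by the positive constant $1-\A(g)-\epsilon$. Since positivity of the curvature operator is scale invariant, each connected component of $M_{t^2/4}$, endowed with the induced metric $g_{t^2/4}$, is a closed $(n-1)$-manifold with positive curvature operator. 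The Böhm--Wilking theorem \cite{Boh-Wil} then identifies each such component, up to diffeomorphism, with a spherical space form $\mathbb{S}^{n-1}/\Gamma$ (for $n=3$ the levels are surfaces of positive Gauss curvature, so this reduces to the classical $\mathbb{S}^2$, $\mathbb{R}\mathbb{P}^2$ dichotomy).

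Finally I would pass from a single level to the ends. The flow $\phi$ of $\nabla f/\arrowvert\nabla f\arrowvert^2$ is defined and regular on $M_{\geq t_1}$ and carries levels to levels, so $(x,s)\mapsto\phi_{s-t_1}(x)$ gives a diffeomorphism $M_{t_1}\times[t_1,+\infty)\to M_{\geq t_1}$. Restricting to each connected component, every end of $M^n$ is diffeomorphic to $(\mathbb{S}^{n-1}/\Gamma_i)\times(0,+\infty)$, and the number of ends equals the finite (and, for $t$ large, constant) number of components of the levels. Setting $K:=M_{\leq t_1}$ finishes the argument. I expect the only delicate point to be turning the $\limsup$ bound \eqref{eq:curvop} into a \emph{uniform strict} positivity of the curvature operator on all the levels for $t$ large, which is exactly what licenses the use of \cite{Boh-Wil}; the properness of $f$ and the gradient-flow identification of the ends are then routine.
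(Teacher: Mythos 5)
Your proposal is correct and takes essentially the same route as the paper, whose entire proof is the one-line observation that inequality \eqref{eq:curvop} of lemma \ref{deriv} together with the B\"ohm--Wilking classification \cite{Boh-Wil} gives the result. What you add --- regularity and compactness of the levels $M_t$ for large $t$ via lemma \ref{distance} and \eqref{eq:4}, the conversion of the $\limsup$ bound into uniform strict positivity of the (scale-invariant) curvature operator, the dimension $n=3$ case via Gauss--Bonnet, and the identification of the ends by the flow of $\nabla f/\arrowvert\nabla f\arrowvert^2$ --- is exactly the detail the paper leaves implicit, and it is all sound.
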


This result can be linked with previous results concerning the topology at infinity of Riemannian manifolds with cone structure at infinity and finite (vanishing) asymptotic curvature ratio: \cite{Pet}, \cite{Lot-She},\cite{Gre-Pet}, \cite{Che-Chi}.\\

 The following lemma establishes some links between the volume growth of metric balls, sublevels and levels:

\begin{lemma}\label{sequence}
Let $(M^n,g,\nabla f)$ be a complete EGS satisfying $(A1)$.

Then for any $q\in M$,
\begin{eqnarray}
\liminf_{r\rightarrow+\infty}\frac{\vol B(q,r)}{r^n}=\liminf_{t\rightarrow+\infty}\frac{\vol M_{\leq t^2/4}}{t^n}\geq \limsup_{t\rightarrow+\infty}\frac{\vol M_{t^2/4}}{nt^{n-1}}>0.\label{eq:volinf}
\end{eqnarray}
Moreover, if we assume $(A2)$ then,
\begin{eqnarray}
\lim_{r\rightarrow+\infty}\frac{\vol B(q,r)}{r^n}=\lim_{t\rightarrow+\infty}\frac{\vol M_{\leq t^2/4}}{t^n}= \lim_{t\rightarrow+\infty}\frac{\vol M_{t^2/4}}{nt^{n-1}}<+\infty.\label{equality-vol}
\end{eqnarray}
\end{lemma}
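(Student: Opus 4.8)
The plan is to route everything through the co-area formula, using that $|\nabla f|^2=f-R$ by \eqref{eq:4}, so that on a level $M_t$ one has $|\nabla f|=\sqrt{t-R}$, comparable to $\sqrt t$ as soon as $R$ is controlled along the levels. I would first establish the identity $\liminf_r\vol B(q,r)/r^n=\liminf_t\vol M_{\le t^2/4}/t^n$. By parts 1) and 2) of Lemma \ref{distance}, under (A1) the sublevel is squeezed between concentric balls: there are constants $c_1,c_2\ge0$ (depending on $p$) with
\[
B(p,t-c_1)\subseteq M_{\le t^2/4}\subseteq B(p,t+c_2)
\]
for $t$ large. Dividing volumes by $t^n$ and using $(t\pm c)^n/t^n\to1$ shows the two outer liminfs coincide; since changing the base point only shifts radii by a bounded amount, $\liminf_r\vol B(q,r)/r^n$ is independent of $q$. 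This gives the first equality of \eqref{eq:volinf}, and under (A2) the same squeeze transfers the limits once their existence is known.

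Next, writing $\Phi(t)=\vol M_{\le t}$ and $\psi(t)=\vol M_t$, the co-area formula gives $\Phi'(t)=\int_{M_t}|\nabla f|^{-1}\diff\mu_t$. Since $r_p\sim2\sqrt t$ on $M_t$ by Lemma \ref{distance}, under (A1) the lower Ricci bound yields $\inf_{M_t}R\ge -nC/(1+r_p^2)$, which is $O(1/t)$, whence
\[
\Phi'(t)\ \ge\ \frac{\psi(t)}{\sqrt{t-\inf_{M_t}R}}\ \ge\ \frac{\psi(t)}{\sqrt t}\,(1-o(1)).
\]
Setting $t=u^2/4$, $W(u)=\vol M_{\le u^2/4}$ and $V(u)=\vol M_{u^2/4}$, this reads $W'(u)\ge(1-o(1))V(u)$. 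Under (A2) the bound $\sup_{M_t}|R|=O(1/t)$ gives the reverse estimate $\Phi'(t)\le\psi(t)t^{-1/2}(1+o(1))$, i.e. $W'(u)=(1+o(1))V(u)$.

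The last ingredient is the \emph{almost-monotonicity} of $\tilde V(u):=V(u)/u^{n-1}$. Rewriting Lemma \ref{vol-hyper} in the variable $u$ gives, for $u\ge u_0$,
\[
\tilde V(u)\ \ge\ \tilde V(u_0)\exp\Big(\int_{u_0^2/4}^{u^2/4}a(s)\diff s\Big)\ \ge\ \tilde V(u_0)\,m(u_0),
\]
with $a\in L^1(+\infty)$ and $m(u_0)=\exp\big(-\int_{u_0^2/4}^{+\infty}|a(s)|\diff s\big)\to1$. Positivity of the rightmost term of \eqref{eq:volinf} is immediate by fixing any $u_0$ with $V(u_0)>0$. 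For the inequality, put $L=\limsup_u V(u)/(nu^{n-1})$; given $\epsilon>0$ I would choose $u_1$ with $\tilde V(u_1)>n(L-\epsilon)$ and $m(u_1)>1-\epsilon$, so almost-monotonicity forces $V(v)\ge n(L-\epsilon)(1-\epsilon)v^{n-1}$ for all $v\ge u_1$; integrating $W'\ge(1-\epsilon)V$ then gives $W(u)/u^n\ge(1-\epsilon)^2(L-\epsilon)(1-(u_1/u)^n)$, and letting $u\to\infty$ then $\epsilon\to0$ yields $\liminf_u W(u)/u^n\ge L$, closing \eqref{eq:volinf}.

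Under (A2), Lemma \ref{vol-hyperII} supplies $b\in L^1(+\infty)$ with $\tilde V(u)\le\tilde V(u_0)\exp\big(\int_{u_0^2/4}^{u^2/4}b(s)\diff s\big)$; together with the lower bound, the oscillation of $\ln\tilde V$ over $[u_1,u_2]$ is at most $\int_{u_1^2/4}^{u_2^2/4}(|a|+|b|)\to0$, so $\ln\tilde V$ is Cauchy at infinity and $\tilde V(u)\to\lambda>0$. Feeding this into $W'(u)=(1+o(1))V(u)$ and integrating gives $W(u)/u^n\to\lambda/n$, while $V(u)/(nu^{n-1})\to\lambda/n$ directly; the squeeze of the first paragraph then makes all three limits in \eqref{equality-vol} exist, coincide, and be finite. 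The main obstacle is the third step: converting a \emph{pointwise} $\limsup$ of the level volumes into a lower bound valid on a whole interval, which is precisely what the almost-monotonicity of Lemma \ref{vol-hyper} provides — without it the co-area integration could not detect the $\limsup$. A secondary technical point is keeping $|\nabla f|$ uniformly comparable to $\sqrt t$ along $M_t$, which is where the decay of $R$ (via the Ricci bounds and $r_p\sim2\sqrt t$) enters.
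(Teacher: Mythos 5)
Your proposal is correct and follows essentially the same route as the paper: the squeeze of sublevels between metric balls via Lemma \ref{distance}, then the co-area formula combined with the $L^1$ almost-monotonicity coming from Lemma \ref{vol-hyper} (resp. Lemma \ref{vol-hyperII}) to pass from level volumes to sublevel volumes from below (resp. from above under (A2)). The only point to tidy is that under (A1) alone $L=\limsup_u V(u)/(nu^{n-1})$ could a priori be $+\infty$, so your choice of $u_1$ with $\tilde V(u_1)>n(L-\epsilon)$ should be replaced by $\tilde V(u_1)>n\Lambda$ for an arbitrary finite $\Lambda<L$, followed by taking the supremum over such $\Lambda$ --- a one-line fix.
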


\textbf{Proof}.\\
    Under $(A1)$, lemma \ref{distance} tells us that the potential function $f$ is equivalent at infinity to $r_p^2/4$ for $p\in M$. So the first equality is clear.
Next, for $1<<t_0\leq t$, by lemma \ref{vol-hyper}
\begin{eqnarray*}
\frac{\vol M_{\leq t^2/4}}{t^n}&\geq&\frac{1}{t^n}\int_{t_0^2/4}^{t^2/4}\frac{\vol M_s}{\sqrt{s+nC}}ds\\
 &\geq&\frac{2^{n-1}}{t^n}\exp\left(\int_{t_0^2/4}^{t^2/4} a(s)ds\right)\frac{\vol M_{t_0^2/4}}{t_0^{n-1}} \int_{t_0^2/4}^{t^2/4}\frac{s^{\frac{n-1}{2}}}{\sqrt{s+nC}}ds.\\
\end{eqnarray*}
So, letting $t\rightarrow+\infty$ in the previous inequality gives for any large $t_0$,
\begin{eqnarray*}
\liminf_{t\rightarrow+\infty}\frac{\vol M_{\leq t^2/4}}{t^n}\geq \exp\left(\int_{t_0^2/4}^{+\infty} a(s)ds\right)\frac{\vol M_{t_0^2/4}}{nt_0^{n-1}}.
\end{eqnarray*}
 Hence inequality (\ref{eq:volinf}) by making $t_0\rightarrow +\infty$.

In case of an upper Ricci bound, using lemma \ref{vol-hyperII}, we get 
\begin{eqnarray*}
\limsup_{r\rightarrow+\infty}\frac{\vol B(q,r)}{r^n}=\limsup_{t\rightarrow+\infty}\frac{\vol M_{\leq t^2/4}}{t^n}\leq \liminf_{t\rightarrow+\infty}\frac{\vol M_{t^2/4}}{nt^{n-1}}<+\infty.
\end{eqnarray*}

Therefore, all the limits exist and are equal. Moreover, they are positive and finite. 

\hfill$\square$\\

\section{Proof of the main theorem \ref{conv}}

\subsection{Gromov-Hausdorff convergence}

Let $(M^n,g,\nabla f)$ be a complete expanding gradient Ricci soliton with finite asymptotic curvature ratio. Let $p\in M^n$ and $(t_k)_k$ any sequence tending to $+\infty$.

 \begin{claim} \textit{The sequence $(M^n,t_k^{-2}g,p)_k$ of pointed Riemannian manifolds contains a convergent subsequence in the pointed Gromov-Hausdorff sense.}
 \end{claim}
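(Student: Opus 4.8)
The plan is to establish pointed Gromov--Hausdorff precompactness via the Gromov compactness theorem, which requires a uniform lower Ricci bound and a uniform control on volume growth for the rescaled metrics $g_k:=t_k^{-2}g$. The crucial subtlety is that Gromov's precompactness theorem in its classical form requires a \emph{uniform} lower Ricci bound of the form $\Ric\geq -(n-1)\kappa$ on balls of fixed radius, and although the finite asymptotic curvature ratio gives $|\Rm(g)|\leq C/(1+r_p^2)$ (assumption $(A2)$), the rescaled Ricci curvature near the basepoint $p$ (where $r_p$ is small) need not be small. I would therefore first address the region near $p$ and the region far from $p$ separately.

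\medskip

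\noindent First I would record the scaling behavior of the Ricci bound. Under finite $\A(g)$ we have $(A2)$, i.e. $|\Ric(g)|\leq C/(1+r_p^2)$. For the rescaled metric $g_k=t_k^{-2}g$, distances satisfy $r_p^{g_k}=t_k^{-1}r_p^{g}$, and Ricci curvature scales as $\Ric(g_k)=\Ric(g)$ (Ricci is scale-invariant as a $2$-tensor), so on the $g_k$-ball $B^{g_k}(p,\rho)$, which corresponds to the $g$-ball $B^{g}(p,t_k\rho)$, we have for points with $r_p^g$ bounded away from zero the estimate $|\Ric(g_k)|\leq C/(1+r_p^{g,2})$, which tends to $0$ \emph{uniformly} as $k\to\infty$ on any annulus $\{a\leq r_p^{g_k}\leq \rho\}$. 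Thus far from the apex the rescaled Ricci curvature becomes uniformly small; the only genuinely dangerous region is a small fixed $g_k$-ball around $p$, i.e. the $g$-ball of radius $t_k\rho_0$ with $\rho_0$ fixed small --- but here too, since $t_k\to+\infty$, the points are eventually far out in $(M^n,g)$ where $(A2)$ forces $|\Ric|$ small, \emph{unless} $p$ itself is such that all of $B^g(p,t_k\rho_0)$ stays bounded, which cannot happen for large $k$. Hence for $k$ large, on the fixed ball $B^{g_k}(p,\rho)$ one gets a uniform bound $\Ric(g_k)\geq -(n-1)\kappa(\rho)$ with $\kappa(\rho)\to 0$; this supplies the lower Ricci hypothesis of Gromov's theorem.

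\medskip

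\noindent Second, I would supply the uniform volume (equivalently, uniform total-boundedness / doubling) control needed for precompactness. By lemma \ref{sequence}, under $(A2)$ the limit $\AVR(g)=\lim_{r\to\infty}\vol B(q,r)/r^n$ exists, is finite and positive, and is independent of $q$. Rescaling gives $\vol_{g_k}B^{g_k}(p,\rho)=t_k^{-n}\vol_g B^g(p,t_k\rho)$, and since $\vol_g B^g(p,t_k\rho)\sim \AVR(g)\,(t_k\rho)^n$, we obtain $\vol_{g_k}B^{g_k}(p,\rho)\to \AVR(g)\,\rho^n$ as $k\to\infty$, so the rescaled volumes of balls of fixed radius are uniformly bounded (above and below) for $k$ large. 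Combined with the uniform lower Ricci bound from the previous step, the Bishop--Gromov inequality yields a uniform bound on the number of disjoint $\epsilon$-balls inside any $B^{g_k}(p,\rho)$, hence uniform total boundedness on every fixed scale. Gromov's precompactness theorem then produces a subsequence converging in the pointed Gromov--Hausdorff topology.

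\medskip

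\noindent The main obstacle I anticipate is precisely the behavior near the basepoint/apex: the rescaled metrics concentrate all of their negative curvature into an ever-shrinking $g$-neighborhood (seen at scale $g_k$ as a fixed neighborhood of $p$), so one cannot claim a clean global lower Ricci bound uniform in $k$ with constant tending to zero without carefully arguing that the preimage region $B^g(p,t_k\rho)$ eventually lies in the asymptotic regime where $(A2)$ is effective. The honest resolution is to note that Gromov precompactness only needs, on each fixed radius $\rho$, a lower Ricci bound $-(n-1)\kappa_\rho$ with $\kappa_\rho$ \emph{independent of $k$} (not tending to zero), and such a bound holds because $\inf_{M^n}\Ric$ is controlled: $(A2)$ gives a global bound $\Ric(g)\geq -C$ on the compact core and $\Ric(g)\to 0$ at infinity, so $\Ric(g_k)=\Ric(g)\geq -C$ \emph{globally and uniformly in $k$}. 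This uniform global lower Ricci bound, together with the uniform volume bound above, is exactly what Gromov's theorem requires, and it settles the claim; the refinement that the curvature actually degenerates only at the single apex point $x_\infty$ is then what feeds into the $C^{1,\alpha}$ regularity statements proved in the remainder of the section.
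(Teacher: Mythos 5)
There is a genuine gap, and it sits exactly at the point you yourself flagged as the ``main obstacle'': your resolution of it rests on a scaling error. The Ricci tensor is indeed invariant under constant rescaling, $\Ric(t_k^{-2}g)=\Ric(g)$, but a curvature \emph{bound} is a comparison with the metric: ``$\Ric(g)\geq -Cg$'' rescales to $\Ric(g_k)\geq -C\,t_k^{2}\,g_k$ for $g_k:=t_k^{-2}g$, so the constant blows up rather than staying uniform. Measured against $g_k$, assumption $(A2)$ gives $|\Ric(g_k)|_{g_k}\leq C/(r^{g_k}_p)^{2}$, the same scale-invariant bound as before, which degenerates near $p$. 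Moreover, the fixed ball $B^{g_k}(p,\rho)=B^{g}(p,t_k\rho)$ always contains $p$ together with its entire $g$-neighborhood; there $|\Ric(g_k)|_{g_k}=t_k^{2}|\Ric(g)|_{g}\to+\infty$ unless $\Ric$ happens to vanish identically near $p$. So neither your claim that $\Ric(g_k)\geq-(n-1)\kappa(\rho)$ on $B^{g_k}(p,\rho)$ with $\kappa(\rho)\to0$, nor the weaker ``global uniform bound $\Ric(g_k)\geq -C$'' of your final paragraph, is available: the curvature of the rescaled metrics genuinely blows up at the basepoint, which is precisely why the limit is a cone with a possibly singular apex. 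With the Ricci bound gone, your second step collapses as well, since Bishop--Gromov can no longer be invoked on $B^{g_k}(p,\rho)$, and the volume asymptotics you quote from lemma \ref{sequence} control only balls centered at the basepoint, which is not enough for total boundedness.

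The fix --- and this is the paper's route --- is to dispense with Ricci lower bounds in the precompactness step altogether. What one needs is the two-sided Euclidean-type volume bound of theorem \ref{Gro-Haus}: $c r^{n}\leq \vol B(x,r)\leq c' r^{n}$ for \emph{all} centers $x\in M^n$ and \emph{all} radii $r>0$. This statement is scale invariant, hence holds verbatim for every $g_k$ with the same constants $c,c'$; then disjoint $\epsilon$-balls centered in $B^{g_k}(p,\rho)$ each have volume at least $c\epsilon^{n}$ while lying in a ball of volume at most $c'(\rho+\epsilon)^{n}$, so their number is bounded independently of $k$, and the metric-space (packing) version of Gromov's precompactness theorem applies with no curvature hypothesis at all. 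The real work is therefore the proof of theorem \ref{Gro-Haus} itself, for which your proposal has no substitute: the lower volume bound at a point $x$ far from $p$ is extracted from the soliton structure (injectivity radius bounds for the rescaled levels $M_{t^2/4}$ via lemmas \ref{vol-hyper}, \ref{diam} and \ref{deriv}, then flowing a level-set ball by $\nabla f/|\nabla f|^{2}$ and using the coarea formula), and Bishop--Gromov is used only on balls $B(x,r)$ with $r\leq r_p(x)/2$, where $(A2)$ gives $\Ric\geq -(C/r_p(x))^{2}g$ and the comparison is scale-free because the dimensionless quantity $(C/r_p(x))\cdot(r_p(x)/2)=C/2$ is independent of $x$. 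In short: curvature bounds are used in the paper only at scales proportional to the distance to the basepoint, never on fixed balls around it, and the precompactness itself is a purely volumetric statement.
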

  According to the Gromov's precompactness theorem (see chap.10 of \cite{Pet} for instance), the claim follows from the following theorem, which is essentially due to Gilles Carron \cite{Che-Chi}.
  
  \begin{theo}\label{Gro-Haus}
  Let $(M^n,g,\nabla f)$ be a complete EGS with finite asymptotic curvature ratio. Then there exist positive constants $c$, $c'$ such that for any $x\in M^n$ and any radius $r>0$,
  \begin{eqnarray*}
cr^n\leq\vol B(x,r)\leq c'r^n.
\end{eqnarray*}
\end{theo}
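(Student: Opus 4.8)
The plan is to prove the two-sided bound in several regimes, the upper bound coming from volume comparison together with the finiteness of $\AVR(g)$ (Lemma \ref{sequence}), and the lower bound resting on a uniform non-collapsing that I would extract from Perelman's no-local-collapsing theorem \cite{Per-Ent} applied to the associated self-similar Ricci flow. First I fix constants: since $\A(g)<+\infty$ we may choose $A>\A(g)$ and $r_0>0$ with $|\Rm(g)|\le A/r_p^2$ and $\Ric\ge -(n-1)(A/r_p^2)g$ for $r_p\ge r_0$, together with a global bound $|\Rm(g)|\le \Lambda_0$; and since $(A2)$ holds, Lemma \ref{sequence} provides $R_0$ and $0<c_0\le C_0$ with $c_0 r^n\le \vol B(p,r)\le C_0 r^n$ for $r\ge R_0$. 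Writing $\rho:=r_p(x)$, I split according to the size of $r$ relative to $\rho$ and to whether $x$ lies in the compact core $\overline{B}(p,2R_0+2r_0)$.

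For the upper bound, if $r\le\rho/2$ and $\rho\ge 2r_0$ then every point of $B(x,r)$ lies at distance $\ge\rho/2$ from $p$, so $\Ric\ge -(n-1)K g$ on $B(x,r)$ with $K=4A/\rho^2$. Bishop--Gromov then yields $\vol B(x,r)\le \omega_n r^n\cosh^{n-1}(\sqrt K\,r)$, and since $\sqrt K\,r\le \sqrt A$ the prefactor is a dimensional constant times $e^{(n-1)\sqrt A}$. If instead $r\ge\rho/2$ then $B(x,r)\subseteq B(p,3r)$; when $3r\ge R_0$ the bound $\vol B(p,3r)\le C_0 3^n r^n$ finishes it, and when $3r<R_0$ the centre and radius stay in a fixed compact set where smoothness of $g$ gives $\vol B(x,r)\le c' r^n$. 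Combining yields $\vol B(x,r)\le c' r^n$ for all $x,r$.

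For the lower bound the core is the non-collapsing. Observe that $|\Rm(g)|\le r^{-2}$ on $B(x,r)$ whenever $\rho\ge 2r_0$ and $r\le \rho/(2\sqrt A)$, since then every point of the ball has $r_p\ge\rho/2$ and $4A/\rho^2\le r^{-2}$. For such balls I invoke Perelman's no-local-collapsing theorem \cite{Per-Ent} for the flow $g(\tau)=(1+\tau)\phi_\tau^* g$: it furnishes $\kappa>0$ with $\vol B(x,r)\ge\kappa r^n$, and the self-similarity of the flow makes $\kappa$ independent of the scale $r$ and of $x$. Monotonicity of volume in the radius upgrades this to the whole far range $r\le\rho/2$: with $r_*=\rho/(2\sqrt A)$ one has $\vol B(x,r)\ge \vol B(x,r_*)\ge \kappa r_*^n$, and $r\le\rho/2=\sqrt A\,r_*$ gives $\vol B(x,r)\ge \kappa A^{-n/2} r^n$. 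For large balls $r\ge\rho/2$ I use $B(p,r-\rho)\subseteq B(x,r)$: when $r\ge 2\rho$ and $r\ge 2R_0$ this gives $\vol B(x,r)\ge c_0(r/2)^n$, and when $\rho/2\le r\le 2\rho$ one has $\vol B(x,r)\ge \vol B(x,\rho/2)\ge \kappa A^{-n/2}(\rho/2)^n\ge \kappa A^{-n/2}(r/4)^n$. The remaining bounded-$r$, compact-core case follows from a uniform local lower volume bound on compact sets (bounded curvature and positive injectivity radius there). Assembling the regimes produces the uniform constant $c$.

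The step I expect to be the \textbf{main obstacle} is securing a genuinely uniform non-collapsing constant $\kappa$. Bishop--Gromov alone is useless here: the only global Ricci lower bound valid across the whole ball is $O(1)$, and over a ball of radius $\asymp\rho\to\infty$ the comparison factor degenerates exponentially; likewise a naive relative comparison from a far point back towards $p$ must cross the high-curvature core and fails. The self-similar structure is therefore essential: it is what lets Perelman's $\kappa$-non-collapsing, a priori a finite-time-interval statement, hold simultaneously at every scale, the soliton identity \eqref{eq:1} (through $\phi_\tau$) reducing the geometry at scale $r$ around an arbitrary far point to a fixed model. Verifying that the hypotheses of \cite{Per-Ent} hold uniformly --- that finite $\A(g)$ delivers $|\Rm|\le r^{-2}$ on exactly the balls where non-collapsing is needed, and that the scaling of $\kappa$ is truly scale-free --- is the delicate point; everything else is volume comparison and the already-established estimate $0<\AVR(g)<+\infty$ of Lemma \ref{sequence}.
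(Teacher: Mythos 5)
Your upper bound is correct and is essentially the paper's own argument (Bishop comparison on balls avoiding the core, the inclusion $B(x,r)\subset B(p,3r)$ together with the finiteness of the asymptotic volume ratio from Lemma \ref{sequence}, and compactness/continuity for the remaining cases), and your regime decomposition for the lower bound is the right bookkeeping. But the step carrying all the content --- a uniform, scale-free $\kappa$ from Perelman's no-local-collapsing theorem --- does not work, and the self-similarity you invoke actually runs \emph{against} you. Non-collapsing of $g$ at a large scale $r$ is non-collapsing of $r^{-2}g$ at scale $1$; since $g(\tau)$ is isometric to $(1+\tau)g$, the shrunk copies $r^{-2}g$ are the time slices $g(r^{-2}-1)$, which approach the singular time $\tau=-1$ as $r\to+\infty$. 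There is no flow prior to $\tau=-1$ from which Perelman's theorem could propagate entropy or non-collapsing control to these slices; flowing forward from any fixed $\tau_0>-1$ yields non-collapsing only at scales comparable to $\sqrt{\tau-\tau_0}$ measured in $g(\tau)\cong(1+\tau)g$, i.e.\ at scales of $g$ that remain \emph{bounded}. Moreover, the constant in \cite{Per-Ent} at scale cutoff $\rho$ requires a lower bound on the entropy $\mu(g,\tau)$ for $\tau\lesssim\rho^2$, and with curvature decaying like $r_p^{-2}$, testing $\mu$ with functions supported in $B(x,\sqrt{\tau})$ shows that entropy lower bounds at all scales are essentially equivalent to the uniform Euclidean volume lower bound you are trying to prove: the appeal to \cite{Per-Ent} is circular. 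A clean counterexample to the mechanism itself: a finite-volume hyperbolic manifold with cusps, scaled so that $\Ric=-g/2$, is a complete EGS (with $f$ constant) whose flow $g(\tau)=(1+\tau)g$ is exactly self-similar and has bounded curvature, yet unit balls deep in a cusp (on which $|\Rm(g)|\leq 1$) have arbitrarily small volume. So ``Perelman plus self-similarity'' cannot furnish a uniform $\kappa$; any correct proof must use the quadratic curvature decay in a way your argument does not.

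The paper replaces this step by a construction specific to the soliton structure. By Lemmas \ref{vol-hyper}, \ref{diam} and \ref{deriv}, the rescaled levels $(M_{t^2/4},t^{-2}g_{t^2/4})$ have bounded curvature, diameter bounded above and volume bounded below, hence by Cheeger's estimate a uniform injectivity radius $i_0>0$; intrinsic balls $B_{g_{t^2/4}}(x,i_0t/2)$ therefore have $(n-1)$-volume at least $ct^{n-1}$. Flowing such a ball by $\nabla f/\arrowvert\nabla f\arrowvert^2$ for a parameter in $[0,\alpha t^2/4]$ keeps it inside $B_g(x,r_p(x)/2)$ (Claim \ref{claim2}, via the growth of $f$ from Lemma \ref{distance}), and since this flow is expanding there, the coarea formula gives $\vol B_g(x,r_p(x)/2)\geq ct^n$ (Lemma \ref{volboule1}). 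Bishop--Gromov \emph{relative} comparison with $\Ric\geq -C\,r_p(x)^{-2}$ on $B(x,r_p(x)/2)$ then converts this single-scale bound into $\vol B(x,r)\geq cr^n$ for all $r\leq r_p(x)/2$ --- a step your range $r<r_*$ needs in any case, since monotonicity in the radius only goes the other way. With your Perelman step replaced by this sweeping-out argument, the rest of your regime analysis closes the proof exactly as in the paper.
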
 
 
 The arguments are essentially the same as in \cite{Che-Chi}. We give the proof for completeness.\\
 
 \textbf{Proof.}\\
 Along the proof, $c$, $c'$ will denote constants independent of $t$ which van vary from line to line.\\
 
 \textbf{Step1}.
 We begin by the 
 \begin{lemma}\label{volboule1}
  There exists $R>0$ and $c>0$ such that for $r_p(x)\geq R$,
 \begin{eqnarray*}
c(r_p(x)/2)^n\leq \vol B(x,r_p(x)/2).
\end{eqnarray*}
\end{lemma}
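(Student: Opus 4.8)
The plan is to deduce the bound from a curvature estimate on $B(x,r_p(x)/2)$ together with a non-collapsing argument powered by the soliton potential. Write $r=r_p(x)$ and fix $\epsilon>0$. First I would turn the finiteness of $\A(g)$ into a uniform curvature bound on the ball: if $y\in B(x,r/2)$ then $r_p(y)\geq r/2$ by the triangle inequality, so for $r$ large the definition of $\A(g)$ gives
\begin{eqnarray*}
\sup_{B(x,r/2)}\arrowvert\Rm(g)\arrowvert\leq \frac{4(\A(g)+\epsilon)}{r^2}.
\end{eqnarray*}
Rescaling by $\bar g:=r^{-2}g$ turns $B(x,r/2)$ into the ball $B_{\bar g}(x,1/2)$, on which $\arrowvert\Rm(\bar g)\arrowvert$ is bounded independently of $r$. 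Since the assertion $c(r/2)^n\leq\vol B(x,r/2)$ is scale invariant, it is equivalent to a uniform lower bound $\vol_{\bar g}B_{\bar g}(x,1/2)\geq c'>0$; in other words, the real task is to prevent these unit balls from collapsing, and this cannot come from Bishop--Gromov since no curvature sign is assumed.

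The mechanism ruling out collapse is the potential function. Setting $\bar f:=r^{-2}f$ and using that the Levi-Civita connection is scale invariant, one gets $\Hess_{\bar g}\bar f=r^{-2}\Hess_g f=r^{-2}\Ric(g)+\tfrac12\bar g$; because the curvature bound above yields $\arrowvert\Ric(g)\arrowvert\leq C/r^2$ on the ball, the first term is $O(r^{-2})$ relative to $\bar g$, so for $r$ large
\begin{eqnarray*}
\Hess_{\bar g}\bar f\geq \frac14\bar g \quad\text{on } B_{\bar g}(x,1/2).
\end{eqnarray*}
Thus the rescaled potential is uniformly convex there, exactly as $\arrowvert x\arrowvert^2/4$ is on the Gaussian soliton. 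Such a function admits no short closed geodesic: along a closed geodesic $\gamma$ of length $L$ lying in the convex region one has $(\bar f\circ\gamma)''=\Hess_{\bar g}\bar f(\dot\gamma,\dot\gamma)\geq\tfrac14$, so $\int_0^L(\bar f\circ\gamma)''>0$, contradicting $\int_0^L(\bar f\circ\gamma)''=\langle\nabla\bar f,\dot\gamma\rangle\big|_0^L=0$.

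I would then convert this into an injectivity radius bound and conclude by volume comparison. The curvature upper bound gives a uniform lower bound $c_1>0$ on the conjugate radius (Rauch). If $\inj_{\bar g}$ took a value strictly below $\min(c_1,1/8)$ on the compact set $\bar B_{\bar g}(x,1/4)$, then at a minimizer $y_0$ Klingenberg's lemma would produce a smooth closed geodesic of length $2\inj_{\bar g}(y_0)<1/4$ through $y_0$, which stays inside $B_{\bar g}(x,1/2)$ and hence in the convex region --- impossible by the previous paragraph. Therefore $\inj_{\bar g}\geq i_0:=\min(c_1,1/8)>0$ on $\bar B_{\bar g}(x,1/4)$, and in particular at $x$. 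Günther's volume comparison (sectional curvature bounded above, radius below both the injectivity and the conjugate radii) then gives $\vol_{\bar g}B_{\bar g}(x,\rho_0)\geq c'>0$ for a fixed $\rho_0>0$, whence $\vol_{\bar g}B_{\bar g}(x,1/2)\geq c'$ and, unscaling, $\vol B(x,r/2)\geq c'\,r^n$.

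The one genuine obstacle is precisely this non-collapsing step, and the uniformly convex rescaled potential is what replaces a Perelman-type no-local-collapsing argument to forbid collapse without any curvature sign. The only delicate point I expect is justifying the \emph{smoothness} of the closed geodesic produced by Klingenberg's lemma, which is what allows the endpoint terms in $\int_0^L(\bar f\circ\gamma)''$ to cancel; this holds because $y_0$ minimizes the injectivity radius over the concentric ball, so a first-variation argument rules out a corner.
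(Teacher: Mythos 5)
Your reduction to non-collapsing is correct as far as it goes: the curvature bound on $B(x,r_p(x)/2)$, the scale invariance of the statement, the computation $\Hess_{\bar g}\bar f=r^{-2}(\Ric+\tfrac12 g)\geq\tfrac14\bar g$ on the rescaled ball, the exclusion of \emph{smooth} closed geodesics by uniform convexity, and the final G\"unther comparison are all fine. The gap is the step producing the smooth closed geodesic. Klingenberg's lemma at a point $y$ only yields a geodesic \emph{loop} based at $y$: smooth at its far point, but with a possible corner at $y$. To remove the corner you invoke first variation at a point $y_0$ minimizing $\inj_{\bar g}$ over the compact set $\bar B_{\bar g}(x,1/4)$; but this minimum may be attained on $\partial B_{\bar g}(x,1/4)$, and there the argument breaks: the length-decreasing variations form an open cone determined by the two tangent vectors of the loop at $y_0$, and nothing prevents this cone from pointing entirely out of the ball (e.g.\ both legs of the loop leaving $y_0$ almost radially outward), in which case minimality over the ball gives no contradiction. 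Nor can convexity be upgraded to exclude corner loops directly: for a loop $\gamma$ of length $\ell$ based at $y$ one has $\int_0^{\ell}(\bar f\circ\gamma)''\,dt=\langle\nabla\bar f(y),\dot\gamma(\ell)-\dot\gamma(0)\rangle$, and since $\arrowvert\nabla f\arrowvert^2=f-R\approx r_p^2/4$ the right-hand side can be of order $2\arrowvert\nabla_{\bar g}\bar f\arrowvert_{\bar g}\approx 1$, which swamps the convexity term $\ell/4$. So short corner loops --- exactly the objects that control $\inj$ locally --- are untouched by your mechanism. This is why local injectivity radius estimates in the literature (Cheeger--Gromov--Taylor) take a volume lower bound as \emph{input} and output an $\inj$ bound, not the reverse; the paper itself uses that implication in this direction in Section 3.2, \emph{after} lemma \ref{volboule1} is proved.

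The paper's own proof avoids the boundary problem by doing all Klingenberg-type reasoning on objects without boundary, namely the closed level hypersurfaces. It first obtains a uniform bound $\inj(M_{t^2/4},t^{-2}g_{t^2/4})\geq i_0$ from Cheeger's estimate for closed manifolds (curvature bounds from lemma \ref{deriv}, diameter from lemma \ref{diam}, volume from lemma \ref{vol-hyper}), hence $\vol_{g_{t^2/4}}B_{g_{t^2/4}}(x,i_0t/2)\geq ct^{n-1}$; it then sweeps these intrinsic balls by the flow of $\nabla f/\arrowvert\nabla f\arrowvert^2$, which moves points a controlled distance ($d_g(y,\phi_v(y))\leq 2\sqrt v$) and is expanding (Jacobian $\geq 1$ since $\Ric+\tfrac12 g\geq 0$ far out), and converts the sweep into an $n$-dimensional volume bound via the coarea formula. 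If you want to keep your strategy, you must replace the interior-minimizer step by an argument of this closed-manifold type; as written, the proof has a genuine gap at that point.
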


 \textbf{Proof.}
Indeed, by lemmas  \ref{vol-hyper}, \ref{diam} and \ref{deriv}, we know that there exists a positive constant $i_0$ such that $\inj (M_{t^2/4},t^{-2}g_{t^2/4})\geq i_0$ for $t\geq t_0>>1$.
Therefore, for $t\geq t_0>>1$ and $x\in M_{t^2/4}$,
\begin{eqnarray}
\vol_{g_{t^2/4}}B_{g_{t^2/4}}(x,i_0t/2)\geq ct^{n-1},\label{eq:volboule}
\end{eqnarray}
for some positive constant $c$.\\
 \begin{claim}  \label{claim2}
\begin{eqnarray*}
\phi_{v}(B_{g_{t^2/4}}(x,i_0t/2))\subset B_g(x,r_p(x)/2),
\end{eqnarray*}
for $v\in[0,\alpha t^2/4]$, where $\alpha$ is a positive constant independent of $t$.
 \end{claim}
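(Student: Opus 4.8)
The plan is to realise the claimed inclusion by following the flow $\phi$, which by construction pushes the level $M_{t^2/4}$ outward at unit $f$-speed, and to bound the $g$-distance travelled by splitting it into a displacement \emph{along} the level set and a displacement \emph{transverse} to it. Since $X:=\nabla f/\arrowvert\nabla f\arrowvert^2$ satisfies $\frac{d}{dv}(f\circ\phi_v)=g(\nabla f,X)=1$, we have $f(\phi_v(y))=f(y)+v=t^2/4+v$ for every $y\in M_{t^2/4}$; in particular $\phi$ is well defined on the superlevel $M_{\geq t^2/4}$ for $t$ large, where $\arrowvert\nabla f\arrowvert$ stays bounded away from $0$. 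For $y\in B_{g_{t^2/4}}(x,i_0t/2)$ I would write
\begin{eqnarray*}
d_g(x,\phi_v(y))\leq d_g(x,y)+d_g(y,\phi_v(y)),
\end{eqnarray*}
and estimate the two terms separately.

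For the transverse term, the curve $s\mapsto\phi_s(y)$, $s\in[0,v]$, joins $y$ to $\phi_v(y)$ with $g$-length $\int_0^v\arrowvert\nabla f\arrowvert^{-1}(\phi_s(y))\,ds$. Here I would use the soliton identity \eqref{eq:4}, $\arrowvert\nabla f\arrowvert^2=f-R$, together with $f(\phi_s(y))=t^2/4+s$ and the decay of $R$ along the flow: since $f\to+\infty$ forces $r_p\to+\infty$ by lemma \ref{distance}, assumption $(A2)$ gives $\rho(t):=\sup_{M_{\geq t^2/4}}\arrowvert R\arrowvert\to0$. Thus, for any fixed $\delta>0$ and $t$ large, $\arrowvert\nabla f\arrowvert(\phi_s(y))\geq\sqrt{t^2/4+s-\rho(t)}\geq(1-\delta)\sqrt{t^2/4+s}$, and integrating,
\begin{eqnarray*}
d_g(y,\phi_v(y))\leq\frac{1}{1-\delta}\int_0^v\frac{ds}{\sqrt{t^2/4+s}}=\frac{2}{1-\delta}\left(\sqrt{\tfrac{t^2}{4}+v}-\tfrac{t}{2}\right)\leq\frac{t}{1-\delta}\left(\sqrt{1+\alpha}-1\right)
\end{eqnarray*}
for $v\in[0,\alpha t^2/4]$, a quantity small compared to $t$ once $\alpha$ is small.

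For the tangential term, a minimizing $g_{t^2/4}$-geodesic from $x$ to $y$ is a curve in $M_{t^2/4}\subset M^n$ whose $g$-length equals its induced length, so $d_g(x,y)\leq d_{g_{t^2/4}}(x,y)<i_0t/2$. Finally lemma \ref{distance} yields $r_p(x)=t(1+o(1))$ for $x\in M_{t^2/4}$, whence $r_p(x)/2\geq(1-\delta)t/2$ for $t$ large. Combining the three estimates,
\begin{eqnarray*}
d_g(x,\phi_v(y))\leq\left(\frac{i_0}{2}+\frac{\sqrt{1+\alpha}-1}{1-\delta}\right)t,
\end{eqnarray*}
so the inclusion follows as soon as $\frac{i_0}{2}+\frac{\sqrt{1+\alpha}-1}{1-\delta}\leq\frac{1-\delta}{2}$. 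This is where the real difficulty lies: not in the analysis but in the bookkeeping of constants, since the transverse gain is only linear in $t$ like the tangential cost and both must fit inside $r_p(x)/2\sim t/2$. Because decreasing $i_0$ merely shrinks the ball in \eqref{eq:volboule} while preserving the lower bound $\geq ct^{n-1}$, I may assume $i_0<1$; then taking $\delta$ and $\alpha$ small enough (for instance $\alpha<1-i_0$) makes the displayed inequality strict for all large $t$, which is precisely the assertion of the claim for this $\alpha$.
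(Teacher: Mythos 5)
Your proof is correct and follows essentially the same route as the paper: the triangle inequality $d_g(x,\phi_v(y))\leq d_g(x,y)+d_g(y,\phi_v(y))$, bounding the flow displacement by integrating $\arrowvert\nabla f\arrowvert^{-1}=(t^2/4+s-R(\phi_s(y)))^{-1/2}$ along the flow, and comparing the result with $r_p(x)\sim t$ via lemma \ref{distance} before choosing $\alpha$ small. If anything, your version is slightly more careful than the paper's: the paper simply bounds the integrand by $s^{-1/2}$ (getting $2\sqrt{v}\leq\sqrt{\alpha}\,t$) and leaves implicit the normalization $i_0<1$ needed for the final comparison of constants, which you justify explicitly by noting that shrinking $i_0$ preserves the volume lower bound \eqref{eq:volboule}.
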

 
 \begin{proof}[Proof of Claim \ref{claim2}]
 Let $y\in B_{g_{t^2/4}}(x,i_0t/2)$. By the triangular inequality,
 \begin{eqnarray*}
d_g(x,\phi_v(y))\leq d_g(x,y)+d_g(y,\phi_v(y))\leq i_0t/2+d_g(y,\phi_v(y)).
\end{eqnarray*}
 Thus, it suffices to control the growth of the function $\psi(v):=d_g(y,\phi_v(y))$ for $v\geq 0$. Now, for $t\geq t_0>>1$,
\begin{eqnarray*}
\psi(v)&\leq&\int_{0}^{v}\arrowvert\psi'(s)\arrowvert ds
\leq \int_{0}^{v}\frac{ds}{\arrowvert\nabla f\arrowvert(\phi_s(y))}\\
&=&\int_{0}^{v}\frac{ds}{\sqrt{s+t^2/4-R(\phi_s(y))}}\\
&\leq&\int_{0}^{v}\frac{ds}{\sqrt{s}}=2\sqrt{v}.
\end{eqnarray*}
Therefore,
 \begin{eqnarray*}
d_g(x,\phi_v(y))\leq i_0t/2+2\sqrt{v}\leq(i_0/2+\sqrt{\alpha})t,
\end{eqnarray*}
 for $v\in[0,\alpha t^2/4]$ and any $\alpha>0$.
 The claim now follows by using the growth of the potential function  given by lemma \ref{distance} and choosing a suitable $\alpha$ sufficiently small.
 
  \end{proof}
 
 By Claim \ref{claim2} and the coarea formula, we have
 \begin{eqnarray}
\vol B_g(x,r_p(x)/2)&\geq& \vol\{ (\phi_{v}(B_{g_{t^2/4}}(x,i_0t/2)); v\in[0,\alpha t^2/4])\}\\
&=& \int_{0}^{\alpha t^2/4}\int_{B_{g_{t^2/4}}(x,i_0t/2)}\frac{\arrowvert\Jac(\phi_v)\arrowvert}{\arrowvert\nabla f\arrowvert}dA_{t^2/4}dv. \label{eq:volboule2}
\end{eqnarray}
Now, for $t\geq t_0>>1$ and  $y\in M_{t^2/4}$, the maps $s\rightarrow\phi_s(y)$ for $s\geq 0$ are expanding since, as in the proof of lemma \ref{diam}, for $v\in TM_{t^2/4}$,
\begin{eqnarray*}
\frac{d}{ds} g(d\phi_s(v),d\phi_s(v))=2\frac{\Ric(d\phi_s(v),d\phi_s(v))+g(d\phi_s(v),d\phi_s(v))/2}{\arrowvert\nabla f\arrowvert^2}\geq0.
\end{eqnarray*}

Combining this fact with inequalities (\ref{eq:volboule}) and (\ref{eq:volboule2}), we get
\begin{eqnarray*}
\vol B_g(x,r_p(x)/2)&\geq&\frac{\alpha t^2/4}{\max_{M_{t^2/4}}\arrowvert\nabla f\arrowvert}\vol_{g_{t^2/4}}B_{g_{t^2/4}}(x,i_0t/2)\\
&\geq& ct^{n}.
\end{eqnarray*}
 This ends the proof of lemma \ref{volboule1}.
 
 \hfill$\square$\\

\textbf{Step 2.}
For $r_p(x)\geq R$, we know that $\Ric\geq -C^2/{r_p(x)^2}$ on the ball $B(x,r_p(x)/2)$ for $C$ independent of $x$ since $\A(g)<+\infty$.
Therefore, by the Bishop-Gromov theorem, for $r\leq r_p(x)/2$,
\begin{eqnarray*}
\vol B(x,r)&\geq&\frac{\vol(n, -(C/r_p(x))^2,r)}{\vol (n,-(C/r_p(x))^2,r_p(x)/2)}\vol B(x,r_p(x)/2),
\end{eqnarray*}
where $\vol(n,-k^2,r)$ denotes the volume of a ball of radius $r$ in the $n$-dimensional hyperbolic space of constant curvature $-k^2$.

Now,
\begin{eqnarray*}
\vol (n,-k^2,r)&=&\vol(\mathbb{S}^{n-1})\int_{0}^r\left(\frac{\sinh(kt)}{k}\right)^{n-1}dt\\
&\geq&\frac{\vol(\mathbb{S}^{n-1})}{n}r^n,
\end{eqnarray*}
and
\begin{eqnarray*}
\vol (n,-(C/r_p(x))^2,r_p(x)/2)=\left(\frac{\vol(\mathbb{S}^{n-1})}{C^n}\int_{0}^{C/2}\sinh(u)^{n-1}du\right)r_p(x)^n.
\end{eqnarray*}

To sum it up, we get by lemma \ref{volboule1},
\begin{eqnarray*}
\vol B(x,r)\geq cr^n, \quad \mbox{for $r\leq r_p(x)/2$ and $r_p(x)\geq R$}.
\end{eqnarray*}

Next, inside the compact ball $B(p,R)$, we will get a similar lower bound for $r\leq R$ because of the continuity of the function $(x,r)\rightarrow \vol B(x,r)/r^n$.

Now, for any $r>0$, choose $x\in M^n$ such that $r_p(x)=r/2$. Then $B(x,r/4)\subset B(p,r)$ and the lower bound follows for any ball centered at $p$.

Finally, for any $x\in M^n$ and radius $r$ satisfying $r\geq 2r_p(x)$, $B(p,r/2)\subset B(x,r)$. Hence the lower bound for any balls $B(x,r)$ for $r\leq r_p(x)/2$ and $r\geq 2r_p(x)$.
Since for $r\in[r_p(x)/2,2r_p(x)]$, $\vol B(x,r)\geq \vol B(x,r_p(x)/2)\geq c(r_p(x)/2)^n\geq c(r/4)^n$, the proof of the lower bound is finished.

\textbf{Step 3.}

To get an upper bound, once again, by the Bishop theorem,
\begin{eqnarray*}
\vol B(x,r)\leq\vol(n,-(C/r_p(x))^2,r)\leq c'r^n, \quad \mbox{for $r\leq r_p(x)/2$},
\end{eqnarray*}
where $c':=\vol(\mathbb{S}^{n-1})\max_{u\in[0,C/2]}u^{-n}\int_{0}^{u}(\sinh(s))^{n-1}ds.$

For $r\geq r_p(x)/2$, $B(x,r)\subset B(p,3r)$ and $\vol B(p,r)\leq c'r^n$ for $r$ large enough (say $3R/2$) by lemma \ref{sequence}. So, $\vol B(x,r)\leq \vol B(p,3r)\leq c'3^nr^n$ for $r_p(x)\geq R$ and $r\geq r_p(x)/2$.

Invoking again the continuity of the volume ratio on $B(p,R)\times [0,R]$, we end the proof of theorem \ref{Gro-Haus}.

 \hfill$\square$\\

 \subsection{$C^{1,\alpha}$ convergence}
 
  \textbf{Step 1}: $(f^{-1}(t_k^2/4),t_k^{-2}g_{t_k^2/4})$ subconverges in the $C^{1,\alpha}$-topology to a compact smooth manifold with a $C^{1,\alpha}$-metric.\\

Indeed, according to the lemmas \ref{vol-hyper}, \ref{diam} and \ref{deriv}, we are in a position to apply the $C^{1,\alpha}$-compactness theorem \cite{Gre-Wu-Con}, \cite{Pet-Con}, \cite{Kas-Con}, to the sequence $(M_{t_k^2/4},t_k^{-2}g_{{t_k^2}/4})_{k}$. This shows the second part of theorem \ref{conv}. Moreover, by inequality (\ref{eq:alex2}) of lemma \ref{deriv}, we immediately get the estimate (\ref{eq:alex}). Equally, by equality (\ref{equality-vol}) of lemma \ref{sequence}, equality (\ref{eq:vol}) follows. \\
\textbf{Step 2}:

For $0<a<b$, consider the annuli $(M_{at^2/4\leq s\leq bt^2/4},t^{-2}g)=:(M_{a,b}(t),t^{-2}g)$ for positive $t$ . Because of the finiteness of $\A(g)$, it follows that 
\begin{eqnarray*}
\lim_{t\rightarrow+\infty}\sup_{M_{a,b}(t)}\arrowvert\Rm(t^{-2}g)\arrowvert\leq \A(g).
\end{eqnarray*}

Moreover, by a local version of Cheeger's injectivity radius estimate [CGT], lemma \ref{volboule1} and by the finiteness of $\A(g)$, there exists a positive constant $\iota_0$ such that for any $x\in M^n$,
\begin{eqnarray*}
\inj(x,g)\geq\iota_0 r_p(x).
\end{eqnarray*}

Now, consider the sequence of pointed complete Riemannian manifolds \\
$(M^n,t_k^{-2}g,p)_k$.
By theorem \ref{Gro-Haus}, $(M^n,t_k^{-2}g,p)$ Gromov-Hausdorff subconverges to a metric space $(X_{\infty},d_{\infty},x_{\infty})$. 
By a local form of the $C^{1,\alpha}$-\\compactness theorem [BKN] and by the previous annuli estimates, one can deduce that $X_{\infty}/\{x_{\infty}\}$ is a smooth manifold with a $C^{1,\alpha}$ metric compatible with $d_{\infty}$ and that the convergence is $C^{1,\alpha}$ outside the apex $x_{\infty}$.\\
\hfill$\square$\\

 Finally, we prove corollary \ref{flat}. \\

\textbf{Proof of corollary \ref{flat}}.
It is not straightforward since the convergence is only $C^{1,\alpha}$. Still, one can apply the  results of the proof of theorem $78$ of the book \cite{Pet}. We sum up the major steps. On the one hand, one shows that the limit metric $g_{S_{\infty}}$ is weakly Einstein, hence smooth by  elliptic regularity. On the other hand, one sees, in polar coordinates, that the metrics $t^{-2}g_{t^2/4}$ $C^{1,\alpha}$-converges to a constant curvature metric. These facts with theorem \ref{conv} suffice to prove  corollary \ref{flat}. 

\hfill$\square$\\

\section{Ends of expanding gradient Ricci solitons}

As we saw in the last sections, the asymptotic curvature ratio controls the topology of the ends of an expanding gradient Ricci soliton. Nonetheless, it does not give any information on the number of ends. Since the fruitful approach initiated, among others, by P. Li, L.F. Tam and J. Wang, we know that counting ends is related to the study of harmonic functions with finite energy: see \cite{Li-Har} for a nice survey. Munteanu and Sesum \cite{Mun-Ses} have used this method to study the ends of shrinking and steady solitons. We will use closely their arguments in our case.

First, we recall a general result due to Carrillo and Ni \cite{Car-Ni} on the growth of metric balls of an EGS with scalar curvature bounded from below, which is always true: see \cite{Zha-Zhu}, \cite{Pig-Rim-Set}.

\begin{prop}[Carrillo-Ni]\label{Car-Ni}
Let $(M^n,g,\nabla f)$ be a complete EGS.
Then, for any $p\in M^n$ and $r\geq r_0$,
\begin{eqnarray*}
\frac{\vol B(p,r)}{\vol B(p,r_0)}\geq \left(\frac{r-2\sqrt{f(p)-\inf_{M^n}R}}{r_0-2\sqrt{f(p)-\inf_{M^n}R}}\right)^{n+2\inf_{M^n}R}.
\end{eqnarray*}
\end{prop}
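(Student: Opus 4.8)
The plan is to exploit the soliton identities of Lemma \ref{id} to build a Lipschitz radial exhaustion and run a Bishop--Gromov-type monotonicity, using only the lower bound on the scalar curvature (which holds on any complete EGS, see \cite{Zha-Zhu}, \cite{Pig-Rim-Set}). Write $R_0:=\inf_{M^n}R>-\infty$ and keep the normalization $Cst=0$, so that \eqref{eq:4} reads $|\nabla f|^2=f-R$ and \eqref{eq:2} reads $\Delta f=R+n/2$. First I would introduce the function $\rho:=2\sqrt{f-R_0}\geq0$ and compute $|\nabla\rho|^2=|\nabla f|^2/(f-R_0)=(f-R)/(f-R_0)\leq1$, so that $\rho$ is $1$-Lipschitz; in particular $|\rho(x)-\rho(p)|\leq d_g(p,x)$, whence $B(p,s)\subseteq\{\rho\leq s+\rho(p)\}$ with $\rho(p)=2\sqrt{f(p)-R_0}$. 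For the Gaussian soliton $\rho$ is exactly the distance to the origin, which is the picture to keep in mind.

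The heart of the argument is a monotonicity for $V(t):=\vol\{\rho\leq t\}$. The key observation is that $\rho\nabla\rho=\tfrac12\nabla(\rho^2)=2\nabla f$, so by \eqref{eq:2} its divergence is $\mathrm{div}(\rho\nabla\rho)=2\Delta f=2R+n\geq n+2R_0$. Applying the divergence theorem on the sublevel $\{\rho\leq t\}$, whose outward unit normal is $\nabla\rho/|\nabla\rho|$, the boundary flux equals $\int_{\{\rho=t\}}\rho|\nabla\rho|\,dA=t\int_{\{\rho=t\}}|\nabla\rho|\,dA$, and therefore $(n+2R_0)V(t)\leq t\int_{\{\rho=t\}}|\nabla\rho|\,dA$. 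On the other hand the co-area formula gives $V'(t)=\int_{\{\rho=t\}}|\nabla\rho|^{-1}\,dA\geq\int_{\{\rho=t\}}|\nabla\rho|\,dA$ since $|\nabla\rho|\leq1$. Combining the two yields $tV'(t)\geq(n+2R_0)V(t)$, i.e. $t\mapsto V(t)/t^{n+2R_0}$ is nondecreasing, hence $V(r)/V(r_0)\geq(r/r_0)^{n+2R_0}$ for $r\geq r_0$. Note the exponent is forced by this computation and reduces to the Euclidean $n$ when $R\equiv0$.

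It remains to transfer this monotonicity from the sublevels $\{\rho\leq t\}$ to the metric balls $B(p,r)$, and this conversion is where I expect the real difficulty to lie. The easy containment $B(p,s)\subseteq\{\rho\leq s+\rho(p)\}$ produced above only compares a ball to a larger sublevel, so naively it yields the stated estimate with $\rho(p)=2\sqrt{f(p)-R_0}$ entering with a $+$ sign; obtaining the sharp shift $-\rho(p)$ of the statement requires the finer bookkeeping of Carrillo--Ni, carried out directly on the sublevels $M_{\leq a}=f^{-1}(]-\infty,a])$ of $f$ via the co-area identity $\int_{M_a}|\nabla f|\,dA=\int_{M_{\leq a}}(R+n/2)$ coming from \eqref{eq:2} and \eqref{eq:4}. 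The subtle point throughout is that, with no upper bound on $R$, one cannot bound distances from above by $\rho$ (points of small $\rho$ need not be close to $p$), so the reverse containment is unavailable and the sharp constant has to be extracted from the monotone quantity itself rather than from a crude ball/sublevel sandwich.
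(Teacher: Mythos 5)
The paper itself gives no proof of Proposition \ref{Car-Ni}: it is quoted from \cite{Car-Ni} with the remark that a scalar curvature lower bound always holds. So your attempt has to stand on its own, and it does not, for two concrete reasons. The monotonicity computation in your second paragraph is formally correct ($\rho\nabla\rho=2\nabla f$, $\mathrm{div}(\rho\nabla\rho)=2R+n\geq n+2R_0$, and the coarea bound $V'(t)\geq\int_{\{\rho=t\}}|\nabla\rho|\,dA$), but it rests on a hidden hypothesis: to apply the divergence theorem on $\{\rho\leq t\}$, and even for $V(t)$ to be finite, you need the sublevel sets of $f$ to be compact, i.e. $f$ proper. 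A general complete EGS need not have proper potential. Take $M=N^{n-1}\times\mathbb{R}$ with $g=g_N+dx^2$ and $f=x^2/4-(n-1)/2$, where $(N,g_N)$ is a noncompact Einstein manifold with $\Ric_{g_N}=-\tfrac{1}{2}g_N$ (a rescaled hyperbolic space): this satisfies \eqref{eq:1} and the normalization $Cst=0$, one has $R\equiv R_0=-(n-1)/2$ and $\rho=|x|$, so every sublevel $\{\rho\leq t\}=N\times[-t,t]$ is noncompact of infinite volume, and your differential inequality is vacuous. In this paper properness of $f$ is only available under $(A1)$ (lemma \ref{distance}), and Proposition \ref{Car-Ni} is invoked in Section 4 precisely in a setting where no such assumption is made, so properness can be neither assumed nor derived here.

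The second gap is the transfer to balls, which you defer but which is not mere bookkeeping: as written you have established no inequality about $\vol B(p,r)$ at all. The containment $B(p,s)\subseteq\{\rho\leq s+\rho(p)\}$ bounds \emph{both} $\vol B(p,r)$ and $\vol B(p,r_0)$ from above by sublevel volumes, whereas a lower bound on their ratio needs a lower bound on the numerator; so, contrary to your parenthetical claim, not even the plus-sign version follows from the sublevel monotonicity. Moreover, the minus-sign ``sharp shift'' you hope to extract by finer bookkeeping is unattainable, because the statement as printed is false: on the Gaussian soliton with $p\neq0$, translation invariance gives $\vol B(p,r)/\vol B(p,r_0)=(r/r_0)^n$, which is strictly smaller than $\left(\frac{r-|p|}{r_0-|p|}\right)^n$ whenever $r>r_0>|p|>0$. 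The minus signs should be read as plus signs; either version yields $\vol B(p,r)\geq c\,r^{\,n+2\inf_{M^n}R}$, which is all the proof of theorem \ref{int-end} actually uses. The correct repair is to run your computation directly on metric balls, which are compact by completeness: for a.e.\ $r$,
\begin{eqnarray*}
(n+2R_0)\vol B(p,r)\leq\int_{B(p,r)}2\Delta f=2\int_{\partial B(p,r)}\partial_{\nu}f\leq\left(r+\rho(p)\right)\frac{d}{dr}\vol B(p,r),
\end{eqnarray*}
where the last step uses your own Lipschitz estimate in the form $|\nabla f|=\sqrt{f-R}\leq\tfrac{1}{2}\rho\leq\tfrac{1}{2}\left(\rho(p)+r\right)$ on $\partial B(p,r)$, together with $\frac{d}{dr}\vol B(p,r)=\mathrm{Area}(\partial B(p,r))$ for a.e.\ $r$. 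Integrating from $r_0$ to $r$ gives $\vol B(p,r)/\vol B(p,r_0)\geq\left(\frac{r+\rho(p)}{r_0+\rho(p)}\right)^{n+2R_0}$ with no properness required; this ball-level argument, not the sublevel one, is the proof that works in the stated generality.
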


In the following theorem, first, we investigate the existence of nontrivial harmonic functions with finite energy. Then, we study the nonparabolicity of the ends of an EGS.
We recall that an end $E$ is said \textbf{nonparabolic} if $E$ has a positive Green's function with Neumann boundary condition.

\begin{theo}\label{int-end}
Let $(M^n,g,\nabla f)$ be a complete EGS.\\
1) Assume $R\geq -\frac{n}{2}+1.$

Then any harmonic function with finite energy is constant.\\
2) Assume $n\geq 3$ and $\inf_{M^n}R> -n/2+1$.

Then any end is nonparabolic.
\end{theo}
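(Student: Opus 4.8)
The plan is to prove the two statements by the methods of Li--Tam and Munteanu--Sesum, part 1) resting on an integrated Bochner formula and part 2) on a super-quadratic volume lower bound. For part 1), let $u$ be harmonic with $\int_{M^n}|\nabla u|^2<+\infty$ and set $h:=|\nabla u|$. I would start from the Bochner formula, which for a harmonic function reads $\frac12\Delta h^2=|\Hess u|^2+\Ric(\nabla u,\nabla u)$, and test it against $\phi^2$ for a cut-off $\phi$. The soliton structure enters through \eqref{eq:1}, i.e. $\Ric=\Hess f-g/2$: I would substitute this into $\int\phi^2\Ric(\nabla u,\nabla u)$ and integrate the Hessian term by parts, using $\Delta u=0$ together with $\Delta f=R+n/2$ from \eqref{eq:2}. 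A short computation then produces
\begin{eqnarray*}
\int_{M^n}\phi^2\,\Ric(\nabla u,\nabla u)=\frac12\int_{M^n}\phi^2h^2\Big(R+\frac n2-1\Big)+\mathcal{B},
\end{eqnarray*}
where $\mathcal{B}$ gathers boundary integrals carrying a factor $\nabla\phi$. The crucial observation is that the hypothesis $R\geq-n/2+1$ makes the bracket nonnegative, which is exactly why this borderline value appears.

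Feeding this back into the tested Bochner identity gives
\begin{eqnarray*}
\int_{M^n}\phi^2|\Hess u|^2+\frac12\int_{M^n}\phi^2h^2\Big(R+\frac n2-1\Big)=\mathcal{B}',
\end{eqnarray*}
with $\mathcal{B}'$ again supported where $\nabla\phi\neq0$, both left-hand terms being nonnegative. It then remains to show $\mathcal{B}'\to0$ along an exhaustion, and this is where I expect the main obstacle. Choosing the standard annular cut-off $\phi$ equal to $1$ on $B(p,\rho)$ and $0$ outside $B(p,2\rho)$, the dangerous integrals are those containing $\nabla f$, which a priori diverge since $|\nabla f|\sim r_p/2$. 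The gain comes from \eqref{eq:4} and lemma \ref{distance}, which bound $|\nabla f|\leq Cr_p$, so that on the annulus $|\nabla f|\,|\nabla\phi|\leq C'$; the Kato inequality $|\nabla h|\leq|\Hess u|$ lets me absorb the Hessian-type boundary term, and every remaining term is dominated by $\int_{B(p,2\rho)\setminus B(p,\rho)}h^2$, which tends to $0$ by finiteness of the energy. Letting $\rho\to+\infty$ forces $\Hess u\equiv0$; since proposition \ref{Car-Ni} gives $\vol M^n=+\infty$ (the exponent $n+2\inf_{M^n}R\geq 2$ being positive), a nonzero parallel gradient is impossible and $u$ must be constant.

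For part 2), I would deduce nonparabolicity of every end from volume growth. Proposition \ref{Car-Ni} yields $\vol B(p,r)\geq c\,r^{\,n+2\inf_{M^n}R}$, and the strict assumption $\inf_{M^n}R>-n/2+1$ makes the exponent strictly larger than $2$. The plan is to localise this to a fixed end $E$: since $f$ is proper with $f\sim r_p^2/4$, the end is exhausted by superlevels of $f$, and integrating $\Delta f=R+n/2\geq\inf_{M^n}R+n/2$ over $E\cap M_{\leq t}$ together with the coarea formula, exactly as in lemma \ref{vol-sub} and with the fixed flux through the compact core becoming negligible as $t\to+\infty$, gives $\vol(E\cap B(p,r))\geq c_E\,r^{\,n+2\inf_{M^n}R}$. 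Hence $\int_1^{+\infty}r\,(\vol(E\cap B(p,r)))^{-1}\diff r<+\infty$, and the Li--Tam volume test for ends shows that $E$ is nonparabolic. The delicate point for this part is precisely the localisation of the lower bound to a single end and the verification that the per-end criterion applies without any curvature sign hypothesis; once it is known that every end is nonparabolic, theorem \ref{end} follows by combining with part 1), since two nonparabolic ends would produce a nonconstant harmonic function of finite energy.
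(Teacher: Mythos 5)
Your proof of part 1) is correct and coincides with the paper's own argument (both follow Theorem 4.1 of Munteanu--Sesum): the tested Bochner formula, the substitution $\Ric=\Hess f-\frac{g}{2}$ followed by integrations by parts using $\Delta u=0$ and $\Delta f=R+\frac{n}{2}$, the annular cutoff on which $\arrowvert\nabla f\arrowvert\,\arrowvert\nabla\phi\arrowvert$ stays bounded, and the conclusion that $\arrowvert\nabla u\arrowvert$ is constant, hence zero because proposition \ref{Car-Ni} gives infinite volume.

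Part 2), however, rests on a step that is false. The ``Li--Tam volume test for ends'' you invoke --- convergence of $\int_1^{+\infty} r\,(\vol(E\cap B(p,r)))^{-1}\diff r$ implies nonparabolicity --- is not a theorem. The unconditional implication between volume growth and parabolicity goes only in the opposite direction (Cheng--Yau, Karp, Grigor'yan): if that integral diverges, the end is parabolic; if it converges, nothing follows without further hypotheses. Super-quadratic, even exponential, volume growth is compatible with parabolicity: on a surface of revolution $\diff r^2+\varphi(r)^2\diff\theta^2$, the end is parabolic if and only if $\int^{+\infty}\varphi(r)^{-1}\diff r=+\infty$, and taking $\varphi(r)=r^2$ except near each integer radius $r=k$, where on an interval of length $e^{-k}$ it dips to the value $e^{-k}$, each dip contributes about $1$ to $\int\varphi^{-1}$, so the end is parabolic while its volume growth remains cubic. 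Volume growth alone cannot exclude such ``bottlenecks''; some additional analytic input is needed, and it is telling that your part 2) never uses the hypothesis $n\geq 3$, nor the soliton structure beyond Carrillo--Ni.

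The missing ingredient is exactly what the paper's proof supplies: a weighted Poincar\'e inequality, valid on any complete EGS with $n\geq3$ and $\inf_{M^n}R>-n/2+1$, namely $\int_{M^n}\frac{\inf_{M^n}R+n/2-1}{2(f+n/2-1)}\phi^2\leq\int_{M^n}\arrowvert\nabla\phi\arrowvert^2$ for all $\phi\in C_c^{\infty}(M^n)$. With this in hand, one argues by contradiction through Li's criterion (lemma 3.6 of \cite{Li-Pre}): if the end $E$ were parabolic, the harmonic barriers $u_i$ would converge to the constant $1$ on $E$; plugging $\phi u_i$ into the weighted inequality and using that the weight is bounded below by $c\,r_p^{-2}$ (since $f\lesssim r_p^2$ by lemma \ref{distance}) yields the quadratic bound $\vol(E(p,r_1)\setminus E(p,r_0))\leq C\,\vol E(p,r_0)\,r_1^2$, which contradicts the localized Carrillo--Ni lower bound with exponent $n+2\inf_{M^n}R>2$ --- the bound you correctly derived, and which the paper also uses. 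In short, your super-quadratic volume estimate is the right target, but it refutes parabolicity only when played against the weighted Poincar\'e inequality; as written, your part 2) asserts a general principle that has counterexamples, so the proof does not close.
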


\textbf{Proof.}\\
\textit{Proof of 1):}\\
Let $u:M^n\rightarrow \mathbb{R}$ satisfying 
\begin{eqnarray*}
\Delta u=0 \quad\mbox{and}\quad \int_{M^n}\arrowvert\nabla u\arrowvert^2<+\infty.
\end{eqnarray*}

Let $\phi$ be any cut-off function. Then, as in the proof of theorem 4.1 of \cite{Mun-Ses},
after several integrations by parts, and by the Bochner formula applied to $u$, we get
\begin{eqnarray*}
\int_{M^n}(R+\frac{n}{2}-1)\arrowvert\nabla u\arrowvert^2\phi^2+\int_{M^n}\arrowvert\nabla\arrowvert\nabla u\arrowvert\arrowvert^2\phi^2\leq\\
4\int_{M^n}\arrowvert\nabla u\arrowvert^2\arrowvert\nabla \phi\arrowvert^2+3\int_{M^n}\arrowvert\nabla u\arrowvert^2\arrowvert\nabla f\arrowvert\arrowvert\nabla \phi^2\arrowvert.
\end{eqnarray*}

Now, take $\phi$ such that 
\begin{eqnarray*}
\phi(x)=\left\{\begin{array}{rl}
1 & \mbox{on $B(p,r)$}\\

(2r-r_p(x))/r & \mbox{on $B(p,2r)\setminus B(p,r)$}\\

0 & \mbox{on $M^n\setminus B(p,2r)$}\end{array}\right.
\end{eqnarray*}
for $p\in M^n$ fixed and $r>0$.
Note that $\arrowvert\nabla u\arrowvert\leq1/r$. 

Moreover, by lemma \ref{distance},
$\arrowvert\nabla f\arrowvert\leq r+\sqrt{f(p)-\inf_{M^n}R}$ on $B(p,2r)$.

Therefore, as $r\rightarrow+\infty$, we get by the previous inequality and the assumption on the scalar curvature:
\begin{eqnarray*}
\arrowvert\nabla\arrowvert\nabla u\arrowvert\arrowvert=(R+\frac{n}{2}-1)\arrowvert\nabla u\arrowvert^2=0.
\end{eqnarray*}
In any cases, $\arrowvert\nabla u\arrowvert$ is constant and by proposition \ref{Car-Ni}, the volume of $M^n$ is infinite, so that $\arrowvert\nabla u\arrowvert=0$.\\

\textit{Proof of 2):}\\
To detect nonparabolicity, we will use a criterion given by lemma $3.6$ of \cite{Li-Pre}:

\begin{lemma}[Li]\label{Li-crit}
An end $E$ with respect to the compact ball $B(p,r_0)$ is nonparabolic if and only if the sequence of harmonic functions $u_i$ defined on $E(p,r_i):=E\cap B(p,r_i)$ for $r_i\rightarrow +\infty$, satisfying $u_i=1$ on $\partial E$, $u_i=0$ on $\partial B(p,r_i)\cap E$ converges to a nonconstant harmonic function on $E$.
\end{lemma}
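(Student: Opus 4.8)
The plan is to realize the $u_i$ as the equilibrium (capacitary) potentials of the end and to identify the dichotomy ``$u_i$ converges to a constant / to a nonconstant function'' with the vanishing / positivity of the capacity of $E$, which in turn governs the existence of the Neumann Green's function. First I would check that each $u_i$ is well-defined: solving $\Delta u_i=0$ on $E(p,r_i)$ with $u_i=1$ on $\partial E$ and $u_i=0$ on $\partial B(p,r_i)\cap E$, the maximum principle gives $0\leq u_i\leq 1$. The decisive structural fact is monotonicity: for $r_j>r_i$ the function $u_j$ is harmonic on $E(p,r_i)$ and satisfies $u_j\geq 0=u_i$ on $\partial B(p,r_i)\cap E$ and $u_j=1=u_i$ on $\partial E$, so the maximum principle applied to $u_j-u_i$ yields $u_j\geq u_i$ on $E(p,r_i)$. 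Hence $(u_i)_i$ increases pointwise; being uniformly bounded, interior Schauder and Harnack estimates upgrade the convergence to $C^\infty_{loc}$, and the limit $u$ is harmonic on $E$ with $0\leq u\leq 1$ and $u=1$ on $\partial E$. The whole question reduces to whether $u\equiv 1$ or $u$ is nonconstant.

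Next I would track the Dirichlet energies $D_i:=\int_{E(p,r_i)}|\nabla u_i|^2$. Since $u_i$ minimizes the energy among $H^1$ functions with its boundary values, extending $u_i$ by $0$ across the annulus $\{r_i\leq r_p\leq r_j\}$ produces an admissible competitor for the problem on $E(p,r_j)$ of the \emph{same} energy; minimality of $u_j$ then forces $D_j\leq D_i$. Thus $(D_i)_i$ decreases to a limit $D_\infty\geq 0$, which I would call the capacity of $E$. Integration by parts, using $\Delta u_i=0$ together with the frozen boundary values, rewrites $D_i$ as the inner flux $\int_{\partial E}\partial_\nu u_i$. Because the data on $\partial E$ do not depend on $i$, elliptic regularity up to $\partial E$ gives $C^1$ convergence there, so the fluxes converge and $D_\infty=\int_{\partial E}\partial_\nu u$; the same integration by parts applied to the bounded harmonic limit $u$ identifies this flux with $\int_E|\nabla u|^2$ (the local convergence already gives $\int_E|\nabla u|^2\leq D_\infty$, and the flux identity yields equality). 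Therefore $D_\infty=\int_E|\nabla u|^2$, which is strictly positive exactly when $u$ is nonconstant.

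It remains to match $D_\infty>0$ with nonparabolicity, and this is the step I expect to be the crux. Following Li--Tam theory, I would build the Neumann Green's function $G$ of $E$ as the monotone limit of the Green's functions $G_i$ on $E(p,r_i)$ (Neumann on $\partial E$, Dirichlet on the outer boundary $\partial B(p,r_i)\cap E$) and show that this limit is finite --- i.e.\ a positive Green's function exists, which is the definition of nonparabolicity recalled above --- \emph{if and only if} the capacity $D_\infty$ is strictly positive. Concretely one compares the $u_i$ with the level sets of $G_i$ via the maximum principle: a nonconstant bounded harmonic $u$ of finite energy is precisely the obstruction to parabolicity, while $u\equiv 1$ (zero capacity) forces every such Green's function limit to diverge. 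Assembling the chain
\[
E\ \text{nonparabolic}\ \iff\ D_\infty>0\ \iff\ \int_E|\nabla u|^2>0\ \iff\ u\ \text{nonconstant}
\]
proves both implications of the lemma. The main technical care lies in the limiting arguments --- justifying that the global energies, the fluxes through $\partial E$, and the Green's functions all pass to the limit --- since a priori the convergence $u_i\to u$ is only local; the boundedness of $u$ and the monotonicity established in the first paragraph are exactly what make these passages legitimate.
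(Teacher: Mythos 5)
You should first be aware that the paper contains no proof of this lemma: it is quoted as Lemma 3.6 of P. Li's notes \cite{Li-Pre}, so your attempt has to stand on its own as a proof of this basic result of Li--Tam theory. The soft half of your plan is correct and matches the standard treatment: well-posedness and $0\le u_i\le 1$, the monotonicity $u_j\ge u_i$ on $E(p,r_i)$, the upgrade to $C^{\infty}_{loc}$ convergence via Harnack plus interior estimates, the monotone decrease of the energies $D_i$ by the Dirichlet principle applied to the extension by zero, and the reduction of the dichotomy to whether $D_\infty>0$. Two caveats there: $\partial E$ sits on a geodesic sphere, hence is in general only Lipschitz, so your ``elliptic regularity up to $\partial E$'' and the flux identity on $\partial E$ are not available as stated; one instead passes the flux to a fixed smooth hypersurface in the interior of $E$ separating $\partial E$ from infinity, which harmonicity permits. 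Also, the full identity $D_\infty=\int_E|\nabla u|^2$ requires an integration by parts at infinity that you do not justify; fortunately only the two one-sided statements ``$D_\infty=0\Rightarrow u$ constant'' and ``$D_\infty>0\Rightarrow u$ nonconstant'' are needed, and those follow from lower semicontinuity and flux convergence respectively.

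The genuine gap is the equivalence you yourself call the crux: $E$ nonparabolic $\iff D_\infty>0$. This is the actual content of the lemma, and your paragraph on it is not an argument: ``a nonconstant bounded harmonic $u$ of finite energy is precisely the obstruction to parabolicity'' merely restates the conclusion to be proved, and ``one compares the $u_i$ with the level sets of $G_i$ via the maximum principle'' is left unspecified. A complete proof must (i) show that the exhaustion Green's functions $G_i$ (Neumann on $\partial E$, Dirichlet on the outer boundary, unit flux at the pole $y$) are monotone increasing, and that their divergence excludes the existence of \emph{any} positive Neumann Green's function (a Hopf-lemma comparison of a candidate against every $G_i$); (ii) tie boundedness of $G_i$ quantitatively to capacity, e.g.\ by sandwiching $m_iw_i\le G_i\le M_iw_i$, where $w_i$ is the Neumann equilibrium potential of a small ball $B_y(\epsilon)$ and $m_i,M_i$ are the extrema of $G_i$ on $\partial B_y(\epsilon)$ (uniformly comparable by Harnack), so that the unit-flux normalization forces $M_i\asymp\left(\int|\nabla w_i|^2\right)^{-1}$; and (iii) show that the capacity of $B_y(\epsilon)$ and your $D_\infty$ (the capacity of $\partial E$) vanish simultaneously --- these are capacities of different compact sets with different conditions on $\partial E$, so this is again a comparison argument, not a tautology. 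Deferring (i)--(iii) to ``Li--Tam theory'' is circular here, because the lemma you are proving is that theory's foundational lemma.
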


We will also use a weighted Poincaré inequality in the sense of \cite{Li-Wan-Poi}:\\
If  $\inf_{M^n}R> -n/2+1$ on a complete EGS $(M^n,g,\nabla f)$ with $n\geq 3$, then $(M^n,g)$ satisfies:
\begin{eqnarray}
\int_{M^n}\frac{\inf_{M^n}R+n/2-1}{2(f+n/2-1)}\phi^2\leq\int_{M^n}\arrowvert\nabla \phi\arrowvert^2,\label{weight}
\end{eqnarray}
for any $\phi\in C_c^{\infty}(M^n)$.\\

A similar inequality is proved in \cite{Mun-Ses} for shrinking gradient Ricci solitons. The proof is the same with minor modifications: see the remark after proposition $3.8$ of \cite{Mun-Ses}. \\

We are in a position to end the proof of 2):\\
Let $E$ be an end with respect to the ball $B(p,r_0)$.
Let $(u_i)_i$ a sequence constructed as in lemma \ref{Li-crit} and assume on the contrary that $(u_i)_i$ converges to the constant function $1$ on $E$. Now, we mimic the proof of lemma 3.10 of \cite{Li-Pre} to get a contradiction.
Let $r_0<r$ such that $E(p,r_0)\neq\emptyset$ and let $\phi$ be a nonnegative cut-off function satisfying
\begin{eqnarray*}
\phi(x)=\left\{\begin{array}{rl}
0 & \mbox{on $\partial E$}\\

1 & \mbox{on $E(p,r)\setminus E(p,r_0)$}\\

\arrowvert\nabla\phi\arrowvert\leq C& \mbox{on $E$}\end{array}\right .
\end{eqnarray*}

Apply inequality (\ref{weight}) to the test function $\phi u_i$. As $u_i$ is harmonic, we get
\begin{eqnarray*}
\int_{E(p,r_1)\setminus E(p,r_0)}\frac{\inf_{M^n}R+n/2-1}{2(f+n/2-1)}u_i^2\leq C\int_{E(p,r_0)}u_i^2,
\end{eqnarray*}
 for any $r_0<r_1<r_i$, where $C$ is a constant independent of $i$. As $(u_i)_i$ converges uniformly on compact sets to $1$, we have
 \begin{eqnarray*}
\int_{E(p,r_1)\setminus E(p,r_0)}\frac{\inf_{M^n}R+n/2-1}{2(f+n/2-1)}\leq C\vol E(p,r_0),
\end{eqnarray*}
for any $r_0<r_1$. Now, by the growth of $f$ given by lemma \ref{distance}, we deduce the following growth estimate:
\begin{eqnarray*}
\vol (E(p,r_1)\setminus E(p,r_0))\leq C\vol E(p,r_0) r_1^2.
\end{eqnarray*}
This is a contradiction with the growth estimate of proposition \ref{Car-Ni} since $n+2\inf_{M^n}R>2$ here. (Indeed, the result of proposition \ref{Car-Ni} can be localized to any end of $M^n$.)

\hfill$\square$\\

\begin{rk}
Note that when $n=2$, the condition in 1) of theorem \ref{int-end} is just $Ric\geq 0$. This was already known in a general setting by \cite{Yau-Fun}. 
\end{rk}
\begin{rk}
2) of theorem \ref{int-end} is sharp in the dimension since when $n=2$, the assumption would be $\inf_{M^n} R>0$. This is impossible because $\inf_{M^n}R\leq 0$ for any EGS by \cite{Pig-Rim-Set}.
\end{rk}
As in corollary $3.7$ of \cite{Mun-Ses}, we deduce from 1) of theorem \ref{int-end}, the 

\begin{coro}\label{nonpara}
Let $(M^n,g,\nabla f)$ be a complete EGS with $R\geq -n/2+1$.
Then it has at most one nonparabolic end.
\end{coro}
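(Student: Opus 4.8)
The plan is to argue by contraposition through the Li--Tam correspondence between nonparabolic ends and harmonic functions of finite Dirichlet energy. Suppose, for contradiction, that $(M^n,g)$ possesses at least two nonparabolic ends $E_1$ and $E_2$ with respect to some compact ball $B(p,r_0)$. The goal is to manufacture from these two ends a single nonconstant harmonic function on $M^n$ with finite energy, thereby contradicting part 1) of theorem \ref{int-end}.

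First I would recall the barrier construction underlying lemma \ref{Li-crit}. On the nonparabolic end $E_1$, the sequence of harmonic functions $u_i$ solving the Dirichlet problem on $E_1(p,r_i)$ (equal to $1$ on $\partial E_1$ and $0$ on $\partial B(p,r_i)\cap E_1$) converges, precisely because $E_1$ is nonparabolic, to a bounded harmonic function $0\leq u\leq 1$ on $E_1$ that is nonconstant. The Li--Tam machinery (see \cite{Li-Har}, \cite{Li-Pre} and its use in \cite{Mun-Ses}) allows one to extend $u$ to a global bounded harmonic function on $M^n$ by harmonically continuing it across the remaining ends, and shows that the extension has finite Dirichlet energy, the energy being controlled by the finite capacity of $E_1$. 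The limiting values of this global function along $E_1$ and along the second nonparabolic end $E_2$ are forced to be distinct, so it is genuinely nonconstant.

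Then I would simply invoke part 1) of theorem \ref{int-end}: since $R\geq -n/2+1$, every harmonic function with finite energy on $M^n$ is constant. This is incompatible with the nonconstant finite-energy harmonic function produced above, which yields the contradiction. Hence $M^n$ has at most one nonparabolic end.

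The main obstacle is the finiteness of the Dirichlet energy of the global barrier function --- boundedness alone would not suffice to apply theorem \ref{int-end}. This is exactly where nonparabolicity of the ends enters essentially, since the energy bound comes from the finite capacity of each nonparabolic end. Because this step is entirely parallel to corollary $3.7$ of \cite{Mun-Ses}, I would carry it out by citing their argument rather than reproducing the capacity estimates, keeping the proof of the corollary short.
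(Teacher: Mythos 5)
Your proposal is correct and follows exactly the paper's route: the paper's proof of this corollary is precisely the one-line appeal to corollary $3.7$ of \cite{Mun-Ses} combined with part 1) of theorem \ref{int-end}, i.e., the Li--Tam principle that two nonparabolic ends would produce a nonconstant bounded harmonic function with finite Dirichlet energy. Your more detailed sketch of that construction (and of why nonparabolicity is what guarantees the finite-energy bound) is a faithful expansion of the same argument.
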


\textbf{Proof of theorem \ref{end}.}\\
According to 2) of theorem \ref{int-end}, any end of such an EGS is nonparabolic. Now, the result follows from corollary \ref{nonpara}.

\hfill$\square$\\

\begin{rk}
The bound in theorem \ref{end} is sharp since the metric product $\mathbb{R}^2\times N^{n-2}$, for $n\geq 4$, where $N^{n-2}$ is any compact Einstein manifold of constant scalar curvature $-(n-2)/2$ is parabolic and its scalar curvature is exactly $-n/2+1$. In dimension $3$, $\mathbb{R}\times \Sigma^2_g$ where $\Sigma^2_g$ is a surface of genus $g\geq 2$ of constant scalar curvature $-1$ is parabolic. Nonetheless, its scalar curvature is $-1$, which is less than $-1/2$. In fact, EGS with constant scalar curvature in dimension $3$ have been classified by Petersen and Wylie: see \cite{Pet-Wyl-Cla}. The possible values belong to $\{-3/2,-1,0\}$. To what extent the value $-1/2$ is a critical value for EGS in dimension $3$?
\end{rk}

\section{Volume monotonicity and geometric inequalities}

\subsection{Volume monotonicity}

We begin by stating volume monotonicity results:

Combining the  lemma \ref{sequence} with the monotonicity results of lemma \ref{vol-sub} and \ref{vol-hyper}, we get the 

\begin{coro}\label{vol-est}
Let $(M^n,g,\nabla f)$ be a complete EGS.

1) Assume $(A2)$ and $R\geq 0$.\\

Then, $ t\rightarrow \vol M_{\leq t^2/4}/{t^n}$ is nondecreasing and
\begin{eqnarray}
0<\AVR(g):=\lim_{r\rightarrow+\infty}\frac{\vol B(p,r)}{r^n}=\lim_{t\rightarrow+\infty}\frac{\vol M_{\leq t^2/4}}{t^n}<+\infty.
\end{eqnarray}

2) Only assume $\Ric\geq 0$ then
\begin{eqnarray}
0<\lim_{t\rightarrow+\infty}\frac{\vol M_{t^2/4}}{nt^{n-1}}\leq\AVR(g)=\lim_{t\rightarrow+\infty}\frac{\vol M_{\leq t^2/4}}{t^n}
\end{eqnarray}
with equality if, for instance, the scalar curvature is bounded from above.
\end{coro}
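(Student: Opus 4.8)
The plan is to deduce both parts of the corollary by combining the monotonicity statements of Lemmas \ref{vol-sub} and \ref{vol-hyper} with the comparison of limits in Lemma \ref{sequence}; the only genuinely new input is the equality case in part 2). Throughout I write $\tau$ for the value of the potential, so that $M_\tau=f^{-1}(\tau)$, and I use repeatedly the elementary change of variable $\tau=t^2/4$, under which $(t^2/4)^{n/2}=t^n/2^n$ and $(t^2/4)^{(n-1)/2}=t^{n-1}/2^{n-1}$.

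For part 1), note that (A2) implies (A1), and $R\geq 0$ is assumed, so the second assertion of Lemma \ref{vol-sub} applies: $\tau\mapsto \vol M_{\leq\tau}/\tau^{n/2}$ is nondecreasing for $\tau$ large. Substituting $\tau=t^2/4$ turns this into the monotonicity of $t\mapsto \vol M_{\leq t^2/4}/t^n$. The equalities, together with positivity and finiteness of the common value, are then exactly the second part of Lemma \ref{sequence}, valid under (A2). This settles part 1).

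For part 2), assume only $\Ric\geq 0$; then $R\geq 0$ and (A1) holds with $C=0$, so all three lemmas apply. First I would note that the monotonicity of $\tau\mapsto \vol M_\tau/\tau^{(n-1)/2}$ (Lemma \ref{vol-hyper}) gives, after the substitution, existence of $V:=\lim_{t\to\infty}\vol M_{t^2/4}/(nt^{n-1})$, while Lemma \ref{vol-sub} gives existence of $\lim_{t\to\infty}\vol M_{\leq t^2/4}/t^n$ and Bishop-Gromov gives existence of $\AVR(g)=\lim_{r\to\infty}\vol B(p,r)/r^n$. Since all three quantities are now honest limits, the first equality in Lemma \ref{sequence} (which a priori only equates the two $\liminf$'s) upgrades to $\AVR(g)=\lim_{t\to\infty}\vol M_{\leq t^2/4}/t^n$, and its inequality gives $V\leq \AVR(g)<+\infty$ together with $V>0$. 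This is the displayed chain of part 2).

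The hard part will be the equality case, because $R$ bounded above and $\Ric\geq 0$ only yield $|\Ric|$ bounded, not the decay (A2), so Lemma \ref{sequence} does not apply and the reverse inequality $\AVR(g)\leq V$ must be proved by hand. The idea is that the factor $(t/t_0)^{(n-1)/2}$ is already controlled by the $\Ric\geq 0$ monotonicity, so no integrable correction term is needed. Writing $R\leq R_{\max}$ and using \eqref{eq:4} in the form $|\nabla f|^2=\tau-R\geq \tau-R_{\max}$ on $M_\tau$, the coarea formula gives $\tfrac{d}{d\tau}\vol M_{\leq\tau}=\int_{M_\tau}|\nabla f|^{-1}\,d\mu_\tau\leq \vol M_\tau/\sqrt{\tau-R_{\max}}$. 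Setting $L:=\lim_{\tau\to\infty}\vol M_\tau/\tau^{(n-1)/2}$ and using $\vol M_\tau\leq L\,\tau^{(n-1)/2}$ (a nondecreasing ratio stays below its limit), I would integrate to obtain $\vol M_{\leq T}\leq \mathrm{const}+L\int^{T}\tau^{(n-1)/2}(\tau-R_{\max})^{-1/2}\,d\tau$, whose right-hand side is asymptotic to $\tfrac{2L}{n}T^{n/2}$. Dividing by $T^{n/2}$, letting $T\to\infty$, and translating back through $\tau=t^2/4$ gives $\AVR(g)\leq V$; combined with $V\leq \AVR(g)$ this forces equality.
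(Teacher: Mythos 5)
Your proposal is correct, and for part 1) and the main chain of part 2) it coincides with the paper's argument: the paper's entire proof is the single sentence ``Combining the lemma \ref{sequence} with the monotonicity results of lemma \ref{vol-sub} and \ref{vol-hyper}, we get the corollary'', and your substitution $\tau=t^2/4$, together with the observation that monotonicity (respectively Bishop-Gromov) turns the relevant $\liminf$'s and $\limsup$'s of lemma \ref{sequence} into honest limits, is precisely how that sentence unpacks. Where you genuinely add content is the equality case of part 2). You are right that it does not follow formally from the quoted lemmas: the equality statement of lemma \ref{sequence} requires $(A2)$, i.e.\ quadratic decay of $\arrowvert\Ric\arrowvert$, whereas $\Ric\geq 0$ together with $R\leq R_{\max}$ yields only boundedness. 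Your coarea argument --- bounding $\frac{d}{d\tau}\vol M_{\leq\tau}\leq \vol M_{\tau}/\sqrt{\tau-R_{\max}}$, then inserting $\vol M_{\tau}\leq L\,\tau^{(n-1)/2}$ from the $\Ric\geq 0$ monotonicity and integrating --- is in effect the proof of the second part of lemma \ref{sequence} re-run with the $(A2)$-based input of lemma \ref{vol-hyperII} replaced by the monotone-ratio bound, and your constants do match up: $\AVR(g)\leq L/(n2^{n-1})=V$. So your write-up is not only correct but fills in a step the paper leaves entirely implicit. The one point you should state explicitly is the finiteness of $L$: you use $\vol M_{\tau}\leq L\,\tau^{(n-1)/2}$ as a nontrivial bound, which is only meaningful because $L$ is a constant multiple of $V$ and you have already established $V\leq\AVR(g)<+\infty$ beforehand; the order of the two halves of your argument is therefore essential and worth flagging.
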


The following corollary was already known in a more general context by \cite{BKN}.
\begin{coro}
Let $(M^n,g,\nabla f)$ be a complete EGS.
Assume
\begin{eqnarray*}
\Ric\geq 0\quad\mbox{and}\quad\A(g)=0.
\end{eqnarray*}
Then $(M^n,g,\nabla f)$ is isometric to the Gaussian expanding soliton.
\end{coro}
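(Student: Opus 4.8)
The plan is to bootstrap the (a priori only $C^{1,\alpha}$ and possibly non-unique) cone structure provided by theorem \ref{conv} and corollary \ref{flat} into genuine isometric rigidity, by showing that $(M^n,g)$ has maximal (Euclidean) volume growth and then invoking the equality case of the Bishop--Gromov theorem. The observation that makes everything collapse is that $\Ric\geq0$ turns the soliton equation \eqref{eq:1} into $\Hess f=\Ric+g/2\geq g/2>0$, so that $f$ is a proper (lemma \ref{distance}) and \emph{strictly} convex exhaustion of $M^n$.

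First I would exploit this convexity at the level of topology. Since $\Hess f$ is everywhere positive definite, $f$ is strictly convex along every geodesic; hence it has at most one critical point, necessarily the nondegenerate global minimum $p_0$ (attained because $f$ is proper and bounded below by lemma \ref{distance}.3). As $f$ is then a proper Morse function with a single critical point, of index $0$, Morse theory shows that $M^n$ is diffeomorphic to $\mathbb{R}^n$; in particular $M^n$ has exactly one end, diffeomorphic to $\mathbb{S}^{n-1}\times(0,+\infty)$. Feeding this into corollary \ref{flat} forces the group $\Gamma$ to be trivial, so $S_{\infty}=\mathbb{S}^{n-1}$ with its round metric (consistently, $\A(g)=0$ gives $K_{g_{S_{\infty}}}=1$ by \eqref{eq:alex}), and therefore, by \eqref{eq:vol},
\begin{eqnarray*}
\AVR(g)=\frac{\vol(S_{\infty},g_{S_{\infty}})}{n}=\frac{\vol(\mathbb{S}^{n-1})}{n}=\omega_n.
\end{eqnarray*}

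With maximal volume growth in hand, the conclusion is immediate. Under $\Ric\geq0$ the function $r\mapsto \vol B(p,r)/(\omega_n r^n)$ is nonincreasing, equal to $1$ as $r\to0$ and tending to $\AVR(g)/\omega_n=1$; being nonincreasing between two values equal to $1$ it is constantly $1$, so every ball realizes equality in Bishop--Gromov and $(M^n,g)$ is isometric to $(\mathbb{R}^n,\eucl)$ (see \cite{Pet}). Then $\Ric\equiv0$, equation \eqref{eq:1} reduces to $\Hess f=g/2$, whose only complete solutions on $\mathbb{R}^n$ are $f=\frac{1}{4}|x-x_0|^2+c$; finally \eqref{eq:4} (with $\mathrm{Cst}=0$) forces $c=0$, so $(M^n,g,\nabla f)$ is the Gaussian soliton up to translation.

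The delicate point is not any individual step but the passage from the weak ($C^{1,\alpha}$, possibly non-unique) asymptotic description to a rigid statement: everything hinges on upgrading ``the asymptotic cone is flat'' to the numerical equality $\AVR(g)=\omega_n$, and it is precisely the strict convexity of $f$ (hence the diffeomorphism with $\mathbb{R}^n$ and the triviality of $\Gamma$) that licenses this upgrade, after which Bishop--Gromov rigidity finishes the argument. A more intrinsic alternative would be to prove directly that the scalar curvature vanishes, by combining $R\geq0$ with the drift identity $\Delta R+\langle\nabla f,\nabla R\rangle=-R-2|\Ric|^2$ and a maximum principle, and then to conclude by Tashiro's characterization of $\Hess f=g/2$; the main obstacle along that route is that $\inf_{M^n}R=0$ is only approached at infinity, so the strong minimum principle does not apply at an interior point and one is forced into a weighted integral argument against $e^{-f}\,dV$.
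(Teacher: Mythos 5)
Your proposal is correct and follows essentially the same route as the paper's own proof: strict convexity of $f$ (from $\Ric\geq 0$) gives $M^n$ diffeomorphic to $\mathbb{R}^n$, then corollary \ref{flat} with trivial $\Gamma$ yields $\AVR(g)=\omega_n$, and the rigidity part of the Bishop--Gromov theorem concludes. Your extra details (the Morse-theoretic argument and the final identification of $f$ via $\Hess f=g/2$ and \eqref{eq:4}) merely flesh out steps the paper leaves implicit.
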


\textbf{Proof.}

As in \cite{Car-Ni} and in the proof of lemma \ref{distance}, in case of $\Ric\geq 0$, $f$ is a proper strictly convex function, hence $M^n$ is diffeomorphic to $\mathbb{R}^n$.
Therefore, by corollary \ref{flat}, corollary \ref{vol-est}, 
\begin{eqnarray*}
\omega_n = \lim_{r\rightarrow+\infty}\frac{\vol B(p,r)}{r^n}=\AVR(g).
\end{eqnarray*}

The result now follows by the rigidity part of the Bishop-Gromov theorem.

\hfill$\square$\\

\subsection{Geometric Inequalities}

 Here, we link $\A(g)$ and $\AVR(g)$ in a global inequality. 
An easy way is to use the Gauss-Bonnet theorem (see \cite{Ber-Pan}) which is only valid for a global odd dimension.
\begin{prop}

Let $(M^{n},g,\nabla f)$ be a complete EGS with $n$ odd.
Assume
\begin{eqnarray}
 \Ric\geq 0\quad\mbox{and}\quad\A(g)<+\infty.
\end{eqnarray}
Then
\begin{eqnarray}
\frac{\omega_n}{(1+\A(g))^{\frac{n-1}{2}}}\leq\AVR(g).
\end{eqnarray}

\end{prop}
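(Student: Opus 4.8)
The plan is to combine the volume lower bound of the second part of corollary \ref{vol-est} with the Gauss--Bonnet--Chern theorem applied to the rescaled level hypersurfaces, exploiting that $n-1$ is even precisely when $n$ is odd. First I would record the topological normalization: under $\Ric\geq 0$ the potential $f$ is proper and strictly convex (as in the proof of lemma \ref{distance}), so $M^n$ is diffeomorphic to $\mathbb{R}^n$ and, for $t$ large, the sublevel $M_{\leq t^2/4}$ is a smooth convex body; hence the level $\Sigma_t:=M_{t^2/4}$ is connected and diffeomorphic to $\mathbb{S}^{n-1}$. Since $n$ is odd, $n-1=2m$ is even and $\chi(\Sigma_t)=\chi(\mathbb{S}^{n-1})=2$.

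By the second part of corollary \ref{vol-est}, the limit $\lim_{t\to+\infty}\vol M_{t^2/4}/(nt^{n-1})$ exists and bounds $\AVR(g)$ from below. Writing $h_t:=t^{-2}g_{t^2/4}$ for the rescaled induced metric, the scaling of $(n-1)$-dimensional volume gives $\vol M_{t^2/4}=t^{n-1}\vol(\Sigma_t,h_t)$, so it suffices to prove $\lim_{t\to+\infty}\vol(\Sigma_t,h_t)\geq \vol(\mathbb{S}^{n-1})/(1+\A(g))^{m}$; then, using $\vol(\mathbb{S}^{n-1})=n\omega_n$, the stated inequality $\omega_n/(1+\A(g))^{(n-1)/2}\leq\AVR(g)$ follows at once. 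To bound $\vol(\Sigma_t,h_t)$ I would apply Gauss--Bonnet--Chern to the closed oriented even-dimensional manifold $(\Sigma_t,h_t)$:
\begin{eqnarray*}
2(2\pi)^m=\chi(\Sigma_t)(2\pi)^m=\int_{\Sigma_t}\mathcal{G}(h_t)\diff V_{h_t},
\end{eqnarray*}
where $\mathcal{G}$ is the Euler (Pfaffian) integrand. By lemma \ref{deriv}, the sectional curvatures of $h_t$ are asymptotically pinched near the interval $[1-\A(g),1+\A(g)]$; in particular $\limsup_{t\to+\infty}K_{h_t}\leq 1+\A(g)$, so for every $\epsilon>0$ and all $t$ large one has $K_{h_t}\leq 1+\A(g)+\epsilon$.

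The key analytic input is a pointwise comparison of the Euler integrand with its value on the round sphere, which is where \cite{Ber-Pan} enters: on a closed oriented $2m$-manifold whose sectional curvature satisfies $0\leq K\leq\Lambda$ one has $0\leq\mathcal{G}\leq \tfrac{2(2\pi)^m}{\vol(\mathbb{S}^{2m})}\Lambda^m$, with equality on the round sphere of curvature $\Lambda$. Substituting $\Lambda=1+\A(g)+\epsilon$ into the Gauss--Bonnet identity yields
\begin{eqnarray*}
2(2\pi)^m\leq \frac{2(2\pi)^m(1+\A(g)+\epsilon)^m}{\vol(\mathbb{S}^{2m})}\vol(\Sigma_t,h_t),
\end{eqnarray*}
hence $\vol(\Sigma_t,h_t)\geq \vol(\mathbb{S}^{n-1})/(1+\A(g)+\epsilon)^m$. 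Letting $t\to+\infty$ (the limit exists by the monotonicity of lemma \ref{vol-hyper}) and then $\epsilon\to0$ gives the desired lower bound and completes the argument.

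I expect the main obstacle to be the pointwise Euler-integrand comparison itself. In dimension two $\mathcal{G}=K$ and the bound $\mathcal{G}\le\Lambda$ is immediate from the upper curvature bound alone, but in dimension $2m\geq 4$ the Pfaffian mixes off-diagonal curvature components, and dominating it by the constant-curvature value $\Lambda^m$ genuinely requires a sign condition on the sectional curvature, not merely $K\le 1+\A(g)$. Verifying that the rescaled levels meet the hypothesis needed for \cite{Ber-Pan} — the near-spherical nonnegativity furnished by the pinching of lemma \ref{deriv} — is the delicate point; once it is granted, the remainder is bookkeeping with the normalizing constants and a routine passage to the limit.
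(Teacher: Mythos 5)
Your overall strategy coincides with the paper's: use $\Ric\geq 0$ to identify the levels $M_{t^2/4}$ with spheres, apply Gauss--Bonnet--Chern to the rescaled levels $(\Sigma_t,h_t)$, bound the Euler integrand via the pinching of lemma \ref{deriv}, and compare the limiting level volume with $\AVR(g)$ (your shortcut through part 2) of corollary \ref{vol-est}, which avoids theorem \ref{conv} altogether, is a harmless and even slightly cleaner variant of the paper's limiting step). The gap is precisely at the step you yourself flag as delicate, and it is fatal as written. First, the pointwise comparison you attribute to \cite{Ber-Pan} --- that $0\leq K\leq\Lambda$ forces $0\leq\mathcal{G}\leq \frac{2(2\pi)^m}{\vol(\mathbb{S}^{2m})}\Lambda^m$ --- is not a theorem in dimensions $2m\geq 6$: by Geroch's classical counterexample, sectional curvature bounds do not control the Gauss--Bonnet integrand pointwise (positive sectional curvature does not even imply $\mathcal{G}\geq 0$ in dimension $6$), so for $n\geq 7$, which is squarely within the scope of the proposition, your key analytic input is false as stated. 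Second, even granting such a comparison, you cannot verify its hypothesis $K\geq 0$: the proposition assumes only ambient $\Ric\geq 0$ and allows any finite $\A(g)$, and then lemma \ref{deriv} gives only $\liminf_{t}K_{h_t}\geq 1-\A(g)$, which is negative as soon as $\A(g)>1$; the stronger conclusion $\liminf_t K_{h_t}\geq 1$ in lemma \ref{deriv} requires ambient nonnegative \emph{sectional} curvature, which is not assumed. So your argument covers at best the range $\A(g)<1$, and even there only in dimensions where the integrand comparison can actually be proved.

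The input the paper relies on (implicitly, by citing lemma \ref{deriv}) is the other half of that lemma, namely the curvature \emph{operator} estimate \eqref{eq:curvop}: $\limsup_{t}\arrowvert\Rm(t^{-2}g_{t^2/4})-\Id_{\Lambda^2}\arrowvert\leq\A(g)$, valid with no sign hypothesis whatsoever. This is the correct hypothesis for a Pfaffian bound, because the integrand $\textbf{K}$ is a universal polynomial in the components of the curvature operator: writing $\Rm(h_t)=\Id_{\Lambda^2}+E_t$ with $\arrowvert E_t\arrowvert\leq\A(g)+o(1)$ and expanding $\textbf{K}$ multilinearly, one compares it with its value on the constant-curvature operator $(1+\A(g))\Id_{\Lambda^2}$ and obtains $\int_{\Sigma_t}\textbf{K}\leq\left((1+\A(g))^{\frac{n-1}{2}}+o(1)\right)\vol(\Sigma_t,h_t)$, which is exactly the inequality the paper feeds into Gauss--Bonnet before passing to the limit. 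To repair your proof, replace the sectional-curvature comparison by this operator-pinching comparison; be aware that obtaining the sharp constant $(1+\A(g))^{\frac{n-1}{2}}$, rather than a crude $1+C_n\A(g)$, still requires checking the combinatorics of the multilinear expansion --- that verification, not the normalization bookkeeping, is the real content hidden in this step. The rest of your reduction (the scaling identity $\vol M_{t^2/4}=t^{n-1}\vol(\Sigma_t,h_t)$, the identity $\vol(\mathbb{S}^{n-1})=n\omega_n$, and the passage to the limit via corollary \ref{vol-est}) is correct.
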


\textbf{Proof}.
As we have already seen, $M_{t^2/4}$ is diffeomorphic to a $(n-1)$-sphere for $t>\min_{M^n}$, since $\Ric\geq 0$.
Therefore, apply the Gauss-Bonnet formula to the $(n-1)$-sphere $M_{t^2/4}$:
\begin{eqnarray*}
2=\chi(M_{t^2/4})=\frac{2}{\vol(\mathbb{S}^{n-1})}\int_{M_{t^2/4}}\textbf{K},
\end{eqnarray*}
where 
\begin{eqnarray*}
\textbf{K}=
\frac{1}{(n-1)!}\sum_{i_1<...<i_{n-1}}\epsilon_{i_1,...,i_{n-1}}\Rm_{i_1,i_2}\wedge...\wedge\Rm_{i_{n-2},i_{n-1}},
\end{eqnarray*}
and $\Rm$ is the curvature form of the metric $t^{-2}g_{t^2/4}$ and $\epsilon_{i_1,...i_{n-1}}$ is the signature of the permutation $(i_1,...,i_{n-1})$.

As $\vol(\mathbb{S}^{n-1})=n\omega_n$ and making $t_k\rightarrow +\infty$ where $(t_k)_k$ is as in theorem \ref{conv}, one has by theorem \ref{conv} and corollary \ref{vol-est},

\begin{eqnarray*}
n\omega_n\leq (1+\A(g))^{\frac{n-1}{2}}\vol(S_{\infty},g_{S_{\infty}})\leq (1+\A(g))^{\frac{n-1}{2}}(n\AVR(g)).
\end{eqnarray*}

\hfill$\square$\\

In case $n$ is not necessarily odd, we still get such an inequality for a small asymptotic curvature ratio: \\

\textbf{Proof of proposition \ref{geo-ineq}}.
Along the proof, we will assume that $M^n$ has only one end. In case of more than one end, the following arguments can be applied to each end and the proposition is established by summing over the ends.

Therefore, consider the connected compact hypersurfaces $(M_{t^2/4},t^{-2}g_{t^2/4})$ for $t$ large enough.
By lemma \ref{deriv}, 
\begin{eqnarray*}
1-\A(g)\leq \liminf_{t\rightarrow+\infty} K_{t^{-2}g_{t^2/4}}&\leq&\limsup_{t\rightarrow+\infty} K_{t^{-2}g_{t^2/4}}\leq 1+\A(g).
\end{eqnarray*}
If $\A(g)$ is less than $1$, then, by Myers' theorem,  $\Gamma=\pi_1(M_{t^2/4})$ is finite and by the Bishop theorem, we get the second inequality.

If we consider the Riemannian finite universal coverings $(\widetilde{M_{t^2/4}},\widetilde{t^{-2}g_{t^2/4}})$ of these hypersurfaces, the previous curvature inequalities will be preserved and if $\A(g)$ is small enough (less than $3/5$) then 
\begin{eqnarray*}
\frac{1}{4}<\frac{1-\A(g)}{1+\A(g)}.
\end{eqnarray*}
Therefore, by  Klingenberg's result, \cite{Bre-Sch} for a survey on sphere theorems, the injectivity radius of $(\widetilde{M_{t^2/4}},\widetilde{t^{-2}g_{t^2/4}})$ will be asymptotically greater than \\
$\pi/\sqrt{1+\A(g)}$. Thus,
\begin{eqnarray*}
\frac{n\omega_n}{(1+\A(g))^{\frac{n-1}{2}}}=\frac{\vol(\mathbb{S}^{n-1})}{(1+\A(g))^{\frac{n-1}{2}}} &\leq&\lim_{t\rightarrow +\infty}\vol(\widetilde{M_{t^2/4}},\widetilde{t^{-2}g_{t^2/4}})\\
&=&\arrowvert \Gamma\arrowvert \lim_{t\rightarrow +\infty}\vol(M_{t^2/4},t^{-2}g_{t^2/4})\\&\leq&\arrowvert \Gamma\arrowvert n\AVR(g).
\end{eqnarray*}

\hfill$\square$\\

As a direct consequence, we get the

\begin{coro}
Let $(M^{n},g,\nabla f)$ be a complete EGS with $n\geq 3$.
Assume $\A(g)=0$.

Then, with the notations of proposition \ref{geo-ineq},
\begin{eqnarray*}
\AVR(g)=\sum_{i\in I}\frac{\omega_n}{\arrowvert \Gamma_i\arrowvert }.
\end{eqnarray*}

\end{coro}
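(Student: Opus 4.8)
The plan is to read the statement off directly from the two-sided estimate of proposition \ref{geo-ineq}, specialized to the degenerate value $\A(g)=0$. First I would note that the hypothesis $\A(g)=0$ a fortiori satisfies $\A(g)<\epsilon$ for the universal constant $\epsilon<3/5$ of proposition \ref{geo-ineq}, so that proposition applies without modification. It yields
\[
\sum_{i\in I}\frac{\omega_n}{\arrowvert\Gamma_i\arrowvert}\frac{1}{(1+\A(g))^{\frac{n-1}{2}}}\leq\AVR(g)\leq\sum_{i\in I}\frac{\omega_n}{\arrowvert\Gamma_i\arrowvert}\frac{1}{(1-\A(g))^{\frac{n-1}{2}}}.
\]
Setting $\A(g)=0$ collapses both factors $(1\pm\A(g))^{\frac{n-1}{2}}$ to $1$, so the lower and upper bounds become the single quantity $\sum_{i\in I}\omega_n/\arrowvert\Gamma_i\arrowvert$. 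A squeeze then forces $\AVR(g)=\sum_{i\in I}\omega_n/\arrowvert\Gamma_i\arrowvert$, which is exactly the claim.

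There is essentially no obstacle: the whole content is already packaged in proposition \ref{geo-ineq}, and this corollary is simply the limiting case in which the two bounds pinch together. The only point worth stating explicitly is that $I$ and the orders $\arrowvert\Gamma_i\arrowvert$ are the same objects as in proposition \ref{geo-ineq} (the ends of $M^n$ and the orders of their fundamental groups at infinity), so that the substitution $\A(g)=0$ is meaningful term by term. As a consistency check, I would remark that the same equality is independently visible from corollary \ref{flat}: when $\A(g)=0$ the asymptotic cone is identified with a disjoint union of flat cones $C(\mathbb{S}^{n-1}/\Gamma_i)$, and corollary \ref{flat} already records $\sum_{i\in I}\omega_n/\arrowvert\Gamma_i\arrowvert=\lim_{r\to+\infty}\vol B(p,r)/r^n=\AVR(g)$. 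The squeeze argument above simply re-derives this equality from the quantitative inequalities, without re-invoking the cone structure of theorem \ref{conv}.
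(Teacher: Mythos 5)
Your proof is correct and is essentially identical to the paper's: the authors present this corollary as a direct consequence of proposition \ref{geo-ineq}, obtained precisely by setting $\A(g)=0$ so that the two bounds pinch together. Your closing remark also mirrors the paper's own comment that the statement is weaker than corollary \ref{flat} but admits this direct quantitative proof.
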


Of course, this corollary is weaker than corollary \ref{flat} but it can be proved directly as above.

\begin{rk}
Note that when $n$ is odd, $\arrowvert\Gamma\arrowvert=1$, since the hypersurfaces are orientable. Moreover, in this case, one can assume only $\A(g)<1$ to get the same result by using a "light" version of the Klingenberg's theorem (which is also due to him) which asserts that "any orientable compact  even-dimensional Riemannian manifold $(N,h)$ with sectional curvature in $(0,1]$ has $\inj(N,h)\geq\pi$". 
\end{rk}

\begin{rk}
The assumption $n\geq 3$ is sharp in the following sense: there exists a complete two-dimensional expanding gradient soliton with nonnegative scalar curvature, asymptotically flat (i.e. $\A(g)=0$) such that $\AVR(g)<\omega_2$, see chap.4, section 5 of \cite{Cho-Lu}.
\end{rk}

\begin{rk}
Note that these inequalities do not depend on the geometry of $f$.
Thus, are these inequalities more universal? For instance,
do they hold for Riemannian manifold with nonnegative Ricci curvature, positive $\AVR$, and finite $A$? This will be the subject of forthcoming papers.
\end{rk}

At this stage, one can ask if there is some rigidity results concerning the asymptotic curvature ratio $\A(g)$ of a nonnegatively curved expanding gradient Ricci soliton. In fact, Huai-dong Cao has built a $1$-parameter family of expanding gradient Ricci soliton with nonnegative sectional curvature: see \cite{Cao-Lim}. These examples are rotationnally symmetric, they behave at infinity like a metric cone and their asymptotic curvature ratios take any values in $(0,+\infty)$.

\bibliographystyle{alpha}
\bibliography{bib}
\end{document}